\newtheorem{theorem}{Theorem}[section]
\newtheorem{lemma}[theorem]{Lemma}
\newtheorem{proposition}[theorem]{Proposition}
\newtheorem{corollary}[theorem]{Corollary}
\theoremstyle{definition}
\newtheorem{definition}[theorem]{Definition}
\newtheorem{remark}[theorem]{Remark}
\newtheorem{example}[theorem]{Example}
\numberwithin{equation}{section}
\newcommand{\gras}[1]{{\mathbb #1}} 
\newcommand{\C}{\gras{C}} 
\newcommand{\Q}{\gras{Q}} 
\newcommand{\N}{\gras{N}} 
\newcommand{\R}{\gras{R}}
\newcommand{\cS}{\mathcal{S}}
\newcommand{\calE}{\mathcal{E}}
\newcommand{\calA}{\mathcal{A}}
\newcommand{\calB}{\mathcal{B}}
\newcommand{\calP}{\mathcal{P}}
\def\elem(#1,#2){  \{ \frac{#1}  {\overline {\ #2\ }} \} }
\title[Fibonacci numbers and lattice structures for plane branches]
    {Fibonacci numbers and  self-dual lattice structures 
    for plane branches}
\author{Mar\'{\i}a Pe Pereira}
    \address{Instituto de Ciencias Matem\'aticas, ICMAT, Madrid, Spain y 
       Institut de Maths. de Jussieu, Paris, France.}
     \email{mariapepereira@hotmail.com}
\author{Patrick Popescu-Pampu}
   \address{Universit{\'e} Lille 1, UFR de Maths., B\^atiment M2\\
     Cit\'e Scientifique, 59655, Villeneuve d'Ascq Cedex, France.}
   \email{patrick.popescu@math.univ-lille1.fr}
\date{10th of March 2013}
\keywords{Curve singularities, distributive lattices, duality, Enriques diagrams, 
  Fibonacci numbers, Hasse diagrams, infinitely near points, Milnor numbers, multiplicity sequences.}
\begin{document}

\begin{abstract}
    Consider a plane branch, that is, an irreducible germ of curve on a smooth complex analytic 
    surface. We define its {\em blow-up complexity} as the number of blow-ups 
    of points necessary to achieve its minimal embedded resolution. We show that there are 
    $F_{2n-4}$ topological types of blow-up complexity $n$, where $F_{n}$ is the 
    $n$-th Fibonacci number. We introduce complexity-preserving 
    operations on topological types which increase the multiplicity and we deduce that the 
    maximal  multiplicity for a plane branch of blow-up complexity $n$ is $F_n$. It is achieved by 
    exactly two topological types, one of them being distinguished as 
    the only type which maximizes the Milnor number. 
    We show moreover that there exists a natural partial order relation on the set  
    of topological types of plane branches of blow-up complexity $n$, making this set 
    a {\em distributive lattice}, that is, any two of its elements admit an infimum and a 
    supremum, each one of these operations beeing distributive relative to the second one. 
    We prove that this lattice admits a {\em unique} order-inverting bijection. As this 
    bijection is involutive, it defines a {\em duality} for topological types of plane branches.   
    The type which maximizes the Milnor number is also the maximal element 
    of this lattice and its dual is the unique type with minimal Milnor number.  
    There are $F_{n-2}$ self-dual topological types of blow-up complexity $n$. 
    Our proofs are done by encoding the topological types by 
    the associated {\em Enriques diagrams}. 
\end{abstract}

\maketitle

\section{Introduction}  
\label{sec:Intro}

Let $C$ be a plane branch, that is, an irreducible 
germ of an analytic curve on a smooth analytic surface $\mathcal{S}$. 
It is a classical fact that one may get a canonical embedded resolution of it  
by successively blowing up the singular points of the strict transform of $C$. 
We say that the number of 
blow-ups needed to arrive at the minimal embedded resolution 
is the {\em blow-up complexity} of $C$. 
This notion is not to be confused with that of {\em resolution complexity} introduced 
by L\^e and Oka in \cite{LO 95}. 

The blow-up complexity is a topological invariant of the pair $(\mathcal{S},C)$. 
It is then natural to 
compare it with more common invariants, as its {\em multiplicity} and its  
{\em Milnor number}. 

We were surprized to discover that, 
{\em if one fixes a blow-up complexity $n$, 
then the maximal multiplicity is equal to the $n$-th Fibonacci number $F_n$}, and that 
{\em there 
are exactly two topological types realizing this maximum} (recall that the 
{\em Fibonacci sequence} $(F_n)_{n \geq 0}$ is defined by the initial conditions 
$F_0 =0, \: F_1 =1$ and the recursive relation $F_{n+1} = F_n + F_{n -1}$, for all $n \geq 1$). 
One may discriminate these two multiplicity-maximizing types using the Milnor number: {\em one 
of them is the unique topological type with maximal Milnor number among plane branches of 
blow-up complexity $n$} (see Theorem \ref{theo:mult}). 

This motivated us to study in more detail the set $\calE_n$ of embedded topological types of 
plane branches with blow-up complexity $n$. We discovered a second appearance of 
the Fibonacci numbers: {\em the cardinal of $\calE_n$ is equal to $F_{2n - 4}$} 
(see Theorem \ref{cardtypes}). But 
$\calE_n$ should not be thought only as a set: we found out a natural partial order 
on $\calE_n$  which makes it a {\em distributive lattice}, that is, any two elements have an infimum 
and a supremum, each one of these operations being distributive with respect to the other one 
(see Proposition \ref{distrlat}). 
This partial order has an absolute maximum, which is the  
topological type of maximal Milnor number alluded to before. 
There is also an absolute minimum, which may be 
characterized as the unique topological type of blow-up complexity $n$ with multiplicity $2$ 
(it is the simple singularity $\mathbb{A}_{2n -4}$). 

This symmetry between the minimum and the maximum extends 
to a {\em duality} on embedded topological types of plane branches: for each $n$, there is 
an order-inverting involution on $\calE_n$ (see Definition \ref{def:dual}). 
This involution is the {\em unique} bijection 
of $\calE_n$ on itself which inverts the partial order (see Theorem \ref{uniqdual}). 
The Fibonacci numbers appear for a third time: {\em there are $F_{n-2}$ self-dual topological 
types of complexity $n$} (see Proposition \ref{selfdual}).

As far as we know, no such lattice structures or duality were known before. 
See Remark \ref{rem:notsame} for some comments about the classical projective 
duality of plane curves.

\medskip
Let us explain now our way to work with embedded topological types. 
There are various classical ways to encode them; 
the most common ones are the sequence of characteristic Newton-Puiseux 
exponents and the weighted dual graph of the minimal embedded resolution. 
Nevertheless, here we describe structures on $\calE_n$ which we discovered and we 
believe are most clearly understandable using what was, historically speaking, 
the first graphical encoding of the topological type of a plane branch: its {\em Enriques diagram}. 

An Enriques diagram associated to a branch 
is a decorated graph homeomorphic to an interval, whose vertices are 
labeled by the infinitely near points appearing during the canonical process of 
embedded resolution by point blow-ups. 
By Enriques' convention, the edges  are either {\em curved} or {\em straight}, 
and moreover one tells if at the junction point of two successive straight edges 
the diagram is broken or not: we speak then about {\em breaking} versus {\em neutral} vertices 
(see Definition \ref{Enriques}).  All our results are proved by 
studying carefully those decorations. For instance, the duality expresses itself most 
easily as a symmetry between curved edges and breaking vertices: note that both ends 
of a curved edge are neutral and both edges adjacent to a breaking vertex are straight. 

The reader who wants to get an overview of the structure of the paper may read the short 
introductory paragraphs of the various sections.  The final Remark \ref{rem:rinrem} 
explains how we were led to discover our results.  As we intend this paper to be understandable 
to both singularists and combinatorialists, we wrote a rather detailed section 
with basic material about infinitely near points and Enriques diagrams (Section 
\ref{sec:Enriques}), which is standard in singularity theory, and another detailed 
section with basic material about partial order relations and lattices (Section \ref{sec:Basic}), 
which is standard in combinatorics.

\medskip

We conclude this introduction with a few words of explanation about our use of the term  
``complexity''. Following Matveev's convention in \cite{M 90} (see also \cite{M 07}), 
an invariant of a class of objects 
may be considered as a {\em complexity measure} if the set of isomorphism classes of objects 
with a given invariant is {\em finite}. Our {\em blow-up complexity} satisfies this condition, 
as well as the {\em Milnor number}, if we look at the embedded topological types of plane branches as objects. But the {\em multiplicity} or the {\em resolution complexity} 
of L\^e and Oka do not.

\medskip
\section{The Enriques diagram and the multiplicity sequence}
\label{sec:Enriques}

In this section we recall basic vocabulary about infinitely near points, as well as the 
equivalent notions of {\em Enriques diagram} and {\em multiplicity sequence} 
associated to a plane branch. The reader who is not familiar with the process of 
{\em point blowing up} and of the way its iteration leads to resolutions of plane curve 
singularities, may gain a lot of intuition as well as technical skills by consulting 
Brieskorn and Kn{\"o}rrer's book \cite{BK 86}. 
\medskip

Let $(\cS,O)$ be a germ of smooth complex analytic surface. A {\bf model} over $(\cS,O)$ 
is a morphism $(\Sigma, E) \stackrel{\pi}{\longrightarrow} (\cS,O)$ obtained as a sequence of blowing-ups 
of points above $O$. 
Its {\bf exceptional divisor} is the reduced curve $E := \pi^{-1}(O)$. 
If  $(\Sigma_i, E_i) \stackrel{\pi_i}{\longrightarrow} (\cS,O)$ for $i =1,2$ are two models 
and $P_i \in E_i$, we say that 
the points $P_1, P_2$ are {\em equivalent} when the bimeromorphic map $\pi_2^{-1} \circ \pi_1$ 
is an isomorphism in a neighborhood of $P_1$. An {\bf infinitely near point} of $O$ is 
an equivalence class of points on various models over $(\cS,O)$. By abuse of language, we 
will say also that  any one of its representatives is an infinitely near point of $O$. 

Denote by $\mathcal{C}_O$ the set of all infinitely near points of $O$. If $P \in \mathcal{C}_O$, 
we denote by $E(P)$ the smooth irreducible rational curve obtained by blowing up $P$. Of 
course, this blow up procedure has to be done in a model, but the various exceptional curves 
obtained like this get canonically identified by the bimeromorphic maps $\pi_2^{-1} \circ \pi_1$. 
We make an abuse of notation and we denote also by 
$E(P)$ its strict transform in further blow-ups. 

If $P, Q \in \mathcal{C}_O$, we say that $Q$ is {\bf proximate} to $P$ if $Q \in E(P)$, 
and we write $Q \mapsto P$. As the exceptional divisor on any model has normal crossings, 
any one of its points lies on one or two of its irreducible components, that is, 
it is proximate either to one or to two other points of 
$\mathcal{C}_O$. In the first case it is called a {\bf free point} over $O$ and in the second one 
a {\bf satellite} over $O$. 
\medskip

Let $(C,O) \hookrightarrow (\cS,O)$ be a {\bf branch}, that is, a reduced irreducible germ of 
complex analytic curve. In the sequel, in order to insist on the fact that the surface $\cS$ is smooth, 
we will say that $(C, O)$ is a {\bf plane branch}. A model 
$(\Sigma, E)  \stackrel{\pi}{\longrightarrow} (\cS,O)$ is called an {\bf embedded resolution} of $(C,O)$ 
if the total transform $\pi^{-1}(C)$ is a normal crossings divisor.  There exists a unique 
{\em minimal} embedded resolution, obtained recursively by blowing up the unique point 
of the last defined model where the total transform  of $C$ has not a normal crossing. 

Denote by $(P_i)_{0 \leq i \leq n-1}$ the finite sequence of infinitely near points of $O$ which 
are blown up in order to achieve the minimal embedded resolution of $(C,O)$. Therefore 
$P_0=O$. Moreover, either $C$ is smooth, in which case $n=0$, or it is singular and $n \geq 3$. 
In this second case, $P_{n-1}$ is a satellite point over $O$ and the strict transform of 
$C$ passing through it is smooth and transversal to both components of the exceptional divisor.

\begin{definition} \label{blowcomp}
    Let $(C,O)$ be a plane branch. We say that the {\bf blow-up complexity} of $C$ 
    is the number of infinitely near points of $O$ which have to 
    be blown up in order to achieve its minimal embedded resolution. 
\end{definition}

With the previous notations, the blow-up complexity of $C$ is equal to $n$. 

 For all $i \in  \{0, \dotsc,  n-1 \}$, denote by $C_i$ the strict transform of $C$ passing through 
 $P_i$ 
 and by $m_i(C) \in \{1, 2, \dotsc, m_0(C)\}$ 
 the multiplicity of $C_i$ at $P_i$. Therefore, $m_0(C)$ denotes the multiplicity of $C$ at $O$. 
 As the multiplicity of a strict transform 
 drops (not necessarily strictly) when one does another blow-up, this sequence is decreasing. 
 As the strict transform of $C$ passing through 
 $P_{n-1}$ is smooth, one has $m_{n-1}(C)=1$.
 
 \begin{definition} \label{multseq}
     The decreasing sequence of positive integers $(m_0(C), \dotsc, m_{n-1}(C))$ is called 
    the  {\bf multiplicity sequence} of the plane branch $C$ with blow-up complexity $n$. 
 \end{definition}
 
 One has the following {\em proximity relations} between the terms of the multiplicity sequence 
 (see \cite[Prop. 3.5.3]{CA 00}):
 
 \begin{proposition} \label{proxrel}
     For each $i \in \{0, \dotsc, n-1\}$, 
     the multiplicity $m_i(C)$ is equal to the sum of multiplicities of the strict transforms of $C$ 
        which pass through points proximate to $P_i$. That is: 
        $$m_i(C) = \sum_{P_j \mapsto P_i} m_j(C) .$$
 \end{proposition}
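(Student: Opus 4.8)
The plan is to translate the proposition into a statement about intersection numbers on the successive blown-up surfaces, the only analytic input being the behaviour of a strict transform against the exceptional curve of a single blow-up.

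First I would record the basic local computation for one blow-up. If $D$ is a curve germ of multiplicity $m$ at a smooth point $P$ and $\pi$ is the blow-up of $P$ with exceptional curve $E$, then $\pi^{*}D = \tilde{D} + m\,E$, and since $\pi^{*}D \cdot E = 0$ (projection formula, as $\pi_{*}E = 0$) and $E^{2} = -1$, one gets $\tilde{D}\cdot E = m$. Feeding this into the expansion of the identity $\pi^{*}A \cdot \pi^{*}B = (A\cdot B)_{P}$ for two germs $A,B$ through $P$ yields the transformation rule
\[
\tilde{A}\cdot\tilde{B} \;=\; (A\cdot B)_{P} - m_{P}(A)\,m_{P}(B),
\]
where the intersection numbers are taken locally at $P$ and at the points of $E$. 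These two formulas are all I need.

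Next, fix $i$ and set $E_i := E(P_i)$. Applying the first formula to $D = C_i$ on the model obtained just after blowing up $P_i$ gives $C_{i+1}\cdot E_i = m_i(C)$. I would then follow the total intersection number of the strict transform of $C$ with $E_i$ through the remaining blow-ups $P_{i+1}, \dots, P_{n-1}$. Since $E_i$ is a smooth rational curve, it has multiplicity $1$ at each of its points and meets nothing off itself; hence the transformation rule shows that blowing up $P_k$ lowers $C\cdot E_i$ by $m_k(C)$ exactly when $P_k\in E_i$, that is when $P_k\mapsto P_i$, and leaves it unchanged otherwise (a center off $E_i$ is never an intersection point of $C$ and $E_i$). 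Telescoping from the model after $P_i$ down to the minimal resolution gives
\[
C_n \cdot E_i \;=\; m_i(C) - \sum_{P_k \mapsto P_i} m_k(C).
\]

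It then remains to evaluate $C_n\cdot E_i$ on the minimal resolution. For $i \le n-2$ the strict transform $C_n$ is disjoint from $E_i$, so $C_n\cdot E_i = 0$ and the displayed relation is exactly the asserted proximity equality. For $i = n-1$ the first formula gives $C_n \cdot E_{n-1} = m_{n-1}(C) = 1$, concentrated at the transversal intersection point; this point is itself proximate to $P_{n-1}$ and carries a smooth branch of multiplicity $1$, so counting it restores the equality (with the convention that $m_j(C)$ records the multiplicity of the strict transform through \emph{any} proximate point, vanishing when $C$ misses it). I expect the main obstacle to be bookkeeping rather than analysis: one must verify that proximity $P_k\mapsto P_i$ forces $k>i$, so the sum only involves later centers; that every center $P_k$ with $k\le n-1$ indeed lies on the strict transform; and, for the boundary index $i=n-1$, that the terminal transversal point is accounted for correctly. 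Once these are settled, the telescoping closes the proof.
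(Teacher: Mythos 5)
Your proof is correct, but there is essentially nothing in the paper to compare it with: the paper does not prove this proposition at all, it quotes it from the literature with a pointer to \cite[Prop.~3.5.3]{CA 00}. Your argument --- the one-blow-up formula $\tilde{A}\cdot\tilde{B} = (A\cdot B)_{P} - m_{P}(A)\,m_{P}(B)$, telescoped along the resolution and then evaluated on the minimal model --- is essentially the classical intersection-theoretic proof, so it is close in spirit to the cited source rather than to anything in the text. Two of the items you flag as bookkeeping are indeed where the real geometric content sits, and both go through. First, the vanishing $C_{n}\cdot E_{i}=0$ for $i\le n-2$ is exactly where the definition of embedded resolution enters: the total transform is a normal crossings divisor and $C_{n}$, being an irreducible germ, meets the exceptional locus in a single point, which lies on $E(P_{n-1})$; normal crossings then excludes any further exceptional component through that point. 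The same observation shows that no infinitely near point of $C$ beyond the resolution is proximate to any $P_{i}$ with $i\le n-2$, so for those indices the sum over the centers $P_{i+1},\dotsc,P_{n-1}$ coincides with the sum in the verbal statement of the proposition. Second, for $i=n-1$ the equality $m_{n-1}(C)=1$ forces the reading you adopt: the sum must run over all infinitely near points of $C$ proximate to $P_{n-1}$ (here the single transversal intersection point of $C_{n}$ with $E(P_{n-1})$, of multiplicity $1$), not merely over the blown-up centers $P_{0},\dotsc,P_{n-1}$; this matches the proposition's wording and the convention of \cite{CA 00}, and it is the only index where the distinction matters. With these two points made explicit, your telescoping argument is a complete and correct proof.
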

 
 The Milnor number $\mu(C)$ of $(C,O)$ (introduced first in \cite{M 68}  for germs of isolated 
 complex algebraic hypersurface singularities of arbitrary dimension) 
 may be expressed in the following way in terms of the associated multiplicity  sequence  
 (see \cite[Proposition 6.4.1]{CA 00}  or \cite[Prop. 6.5.9]{W 04}):
 
 \begin{proposition} \label{compMiln}
    From the multiplicity sequence $(m_i(C))_{0 \leq i \leq n-1}$ associated to the plane branch  
    $(C,O)$, one can compute the Milnor number of $(C,O)$ by the formula: 
       $$ \mu(C)=\sum_{0 \leq i \leq n-1} m_i(C) \cdot (m_i(C) - 1).$$
  \end{proposition}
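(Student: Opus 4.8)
The plan is to reduce the formula to two classical ingredients: \emph{Milnor's formula} $\mu = 2\delta - r + 1$ relating the Milnor number to the $\delta$-invariant and the number $r$ of local branches, and \emph{Noether's formula} expressing $\delta$ as a sum of binomial coefficients over the infinitely near points lying on $C$. Here $\delta(C,O) = \dim_{\C}(\widetilde{\mathcal{O}}/\mathcal{O}_{C,O})$, where $\widetilde{\mathcal{O}}$ is the normalisation of the local ring. Since $(C,O)$ is a branch we have $r=1$, so Milnor's formula becomes $\mu(C) = 2\delta(C)$. Granting Noether's formula, I would then write
$$\mu(C) = 2\delta(C) = 2\sum_{P}\binom{m_P(C)}{2} = \sum_{P} m_P(C)\,(m_P(C)-1),$$
where $P$ runs over the infinitely near points of $O$ situated on the successive strict transforms of $C$ and $m_P(C)$ denotes the multiplicity there. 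Because the strict transform of a branch is again a branch, it passes through exactly one such point at each stage; the points of multiplicity $\geq 2$ all occur among $P_0,\dots,P_{n-1}$, and every further point has multiplicity $1$ and contributes $0$ since $\binom{1}{2}=0$. Hence the sum reduces to $\sum_{0\le i\le n-1} m_i(C)(m_i(C)-1)$, which is the asserted formula.

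The real content is therefore Noether's formula, which I would prove by induction on the blow-up complexity $n$, using the behaviour of $\delta$ under a single point blow-up. The base case $n=0$ is the smooth branch, where $\delta=0$ and the only multiplicity is $m_0=1$, so both sides vanish. For the inductive step, let $\tilde C$ be the strict transform of $C$ after blowing up $O=P_0$; it is a branch of blow-up complexity $n-1$ passing through $P_1$. The crucial point is the local identity
$$\delta(C,O) = \binom{m_0(C)}{2} + \delta(\tilde C, P_1),$$
after which the induction hypothesis applied to $\tilde C$ closes the argument and, collecting terms, yields Noether's formula.

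Establishing this identity is the main obstacle. I would obtain it by an adjunction/arithmetic-genus computation: realise the germ as the germ at $O$ of a compact curve $C$ whose only singularity is at $O$, sitting on a smooth compact surface (a standard reduction), and blow up $O$ with exceptional curve $E$, so that $\pi^{\ast}C = \tilde C + m_0 E$. From $\tilde C\cdot E = m_0$ and $E^2=-1$ one computes $\tilde C^2 = C^2 - m_0^2$ and $K\cdot\tilde C = K\cdot C + m_0$, whence the adjunction formula gives $p_a(\tilde C) = p_a(C) - \binom{m_0}{2}$. Since blowing up leaves the normalisation unchanged, the geometric genus $g$ is preserved, and comparing $p_a = g + \sum_P \delta_P$ before and after the blow-up shows that the total $\delta$-invariant drops by exactly $\binom{m_0}{2}$; as the only singularity of $\tilde C$ above $O$ is at $P_1$, this is precisely the displayed identity. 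The hard part is exactly this control of $\delta$ under blowing up: everything else is bookkeeping. One may alternatively argue purely locally with the conductor of $\mathcal{O}_{C,O}$, but the global computation is the most transparent.
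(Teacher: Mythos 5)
The paper itself does not prove Proposition \ref{compMiln}: it quotes it as a known result, with references to \cite[Prop. 6.4.1]{CA 00} and \cite[Prop. 6.5.9]{W 04}. So there is no internal proof to compare against; what you wrote is, in outline, precisely the classical argument contained in those sources: Milnor's formula $\mu = 2\delta - r + 1$ (hence $\mu = 2\delta$ for a branch), Noether's formula $\delta = \sum_P \binom{m_P(C)}{2}$ over infinitely near points on $C$, and the one-blow-up identity $\delta(C,O) = \binom{m_0}{2} + \delta(\tilde{C},P_1)$ obtained from adjunction together with the invariance of the geometric genus. Your adjunction computation is correct, and so is the reduction of the sum over all infinitely near points to the finite sum over $P_0,\dotsc,P_{n-1}$, since every point not among these has multiplicity $1$ and contributes $0$. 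One remark on the compactification: you need not (and it is not easy to) arrange that the compactified curve has $O$ as its only singularity; any other singular points are untouched by the blow-up at $O$ and cancel when you compare $p_a - g$ before and after, so the argument goes through without that assumption, provided you take a globally irreducible polynomial representative.

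There is, however, one step that is wrong as stated and would derail the induction if taken literally: the strict transform $\tilde{C}$ at $P_1$ does \emph{not} in general have blow-up complexity $n-1$. Blow-up complexity counts blow-ups until the \emph{total} transform has normal crossings, and these continue after the strict transform is already smooth; for the cusp $x^3 - y^2 = 0$ one has $n=3$, while its strict transform is smooth, hence of complexity $0$ (with the paper's conventions, complexities $1$ and $2$ never occur). What is true, and is all you need, is that the complexity of $\tilde{C}$ is strictly smaller than $n$: blowing up $P_1,\dotsc,P_{n-1}$ already yields an embedded resolution of $\tilde{C}$, because a subdivisor of a normal crossings divisor is again normal crossings. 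So run a strong induction on the complexity (or induct instead on the number of infinitely near points of multiplicity at least $2$), and observe --- as you already do for the tail of Noether's sum --- that the points blown up for $C$ but not needed for $\tilde{C}$ carry multiplicity $1$ and contribute nothing. With that repair your proof is complete and agrees with the standard one the paper cites.
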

  
  Note that the previous formula shows that the Milnor number is necessarily {\em even}. 
  In fact, $\mu(C)/ 2$ is equal to the more classical $\delta$-invariant of the branch (see for instance 
  \cite[Prop. 6.3.2]{W 04}). In the sequel we will also need (see \cite[Example 6.5.1]{W 04}):
  
  \begin{proposition} \label{compMiln2}
    Let $C$ be a plane branch defined by the equation $x^a - y^b =0$, where 
    $a,b$ are coprime positive integers. Then $\mu(C) = (a-1)(b-1)$. 
  \end{proposition}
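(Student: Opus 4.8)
The plan is to compute $\mu(C)$ directly from Milnor's original definition \cite{M 68}, which for an isolated plane curve singularity $f(x,y)=0$ identifies the Milnor number with the colength of the Jacobian ideal, namely $\mu(C) = \dim_{\C} \C\{x,y\}/(\partial_x f, \partial_y f)$. I would take $f = x^a - y^b$ and reduce everything to a monomial count in the local ring.

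First I would compute the partial derivatives $\partial_x f = a\, x^{a-1}$ and $\partial_y f = -\,b\, y^{b-1}$. Since $a$ and $b$ are positive integers, hence nonzero elements of $\C$, they are units in $\C\{x,y\}$, so the Jacobian ideal equals $(x^{a-1}, y^{b-1})$. When $C$ is genuinely singular one has $a, b \geq 2$; the coprimality of $a$ and $b$ guarantees that $f$ is irreducible, so that $C$ is indeed a branch, and in any case the origin is the only common zero of the two partials, so the singularity is isolated and $\mu(C)$ is finite. It then remains to observe that the residue classes of the monomials $x^i y^j$ with $0 \leq i \leq a-2$ and $0 \leq j \leq b-2$ form a $\C$-basis of the quotient $\C\{x,y\}/(x^{a-1}, y^{b-1})$: any monomial divisible by $x^{a-1}$ or by $y^{b-1}$ vanishes in the quotient, and the listed monomials are visibly linearly independent modulo the ideal. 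Counting them yields $\dim_{\C} \C\{x,y\}/(\partial_x f, \partial_y f) = (a-1)(b-1)$, the desired value. (The case $a=1$ or $b=1$ gives a smooth branch, where both sides equal $0$.)

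This first argument is essentially a routine dimension count, so I do not expect a serious obstacle; the only points requiring care are the passage to the units $a$ and $b$ and the verification that the singularity is isolated.

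As an alternative staying entirely within the combinatorial framework of this section, one could instead invoke Proposition \ref{compMiln}. The sequence of blow-ups resolving $x^a - y^b$ realizes the (subtractive) Euclidean algorithm applied to the pair $(a,b)$, which determines its multiplicity sequence $(m_i(C))$. Using the proximity relations of Proposition \ref{proxrel} one would establish, by induction along this algorithm, the two identities $\sum_i m_i(C) = a + b - 1$ and $\sum_i m_i(C)^2 = a\, b$. Proposition \ref{compMiln} would then give $\mu(C) = \sum_i m_i(C)^2 - \sum_i m_i(C) = ab - (a+b-1) = (a-1)(b-1)$. In this second route the real work is the determination of the multiplicity sequence together with the proof of the two sum identities, which is exactly where the main difficulty lies; the concluding arithmetic is immediate.
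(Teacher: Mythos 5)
Your first argument is correct and complete, and it is worth noting at the outset that the paper itself offers no proof of this proposition: it is stated as a known fact with a citation to \cite[Example 6.5.1]{W 04}, so there is no internal argument to compare against. Your Jacobian computation is the standard way to fill that citation in: since $a,b$ are units in $\C$, the Jacobian ideal of $f=x^a-y^b$ is the monomial ideal $(x^{a-1},y^{b-1})$, the quotient $\C\{x,y\}/(x^{a-1},y^{b-1})$ has the monomials $x^iy^j$ with $0\leq i\leq a-2$, $0\leq j\leq b-2$ as a basis, and the count gives $(a-1)(b-1)$; the isolatedness check and the degenerate cases $a=1$ or $b=1$ are handled correctly. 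Two small remarks. First, in \cite{M 68} the Milnor number is \emph{defined} topologically (as the middle Betti number of the Milnor fiber); its identification with the colength of the Jacobian ideal is a theorem, so your phrase ``Milnor's original definition'' is slightly loose, though this is an attribution point and not a mathematical gap. Second, your observation about coprimality is exactly right: the formula $\mu=(a-1)(b-1)$ holds for any $a,b\geq 1$, and coprimality enters only to guarantee that $x^a-y^b$ is irreducible, i.e.\ that $C$ is a branch as the proposition requires. Your alternative route through Proposition \ref{compMiln}, the proximity relations, and the identities $\sum_i m_i=a+b-1$, $\sum_i m_i^2=ab$ along the Euclidean algorithm is also viable and would be more in the spirit of the paper's combinatorial framework, but as you state it, it is only a sketch (the induction establishing the two sum identities is precisely the content); since your first argument is self-contained, nothing is missing.
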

 
 Let us describe now the geometric object by which we will represent all along 
 the paper the topological type of $(\mathcal{S},C)$: its {\em Enriques diagram}, 
 introduced first in  \cite[Libro Quarto, Cap. 1, Sec. 8]{EC 18} (see also 
 Casas' book \cite[Section 3.9]{CA 00}). It is a finite graph homeomorphic to an  interval and 
 enriched with {\em decorations} of the edges and of the vertices: 
 the edges may be {\em curved/straight} and the vertices {\em neutral/breaking}:

 \begin{definition} \label{Enriques}
    If $v, w$ are two vertices of a graph homeomorphic to an interval, 
    we denote by $[vw]$ the segment joining them. 
   The {\bf Enriques diagram} $\epsilon(C)$ of the plane branch $(C,O)$ with 
   blow-up complexity $n$ is a 
   graph homeomorphic to an interval, 
   whose vertices $(v_i)_{0 \leq i \leq n-1}$ correspond bijectively to the 
   infinitely near points $(P_i)_{0 \leq i \leq n-1}$ and whose edges 
   $(e_i)_{1 \leq i \leq n-1}$ are indexed such that $e_i = [v_{i-1} v_i]$.   
   The edges are either {\bf curved} or {\bf straight}  
   and successive straight edges make either a straight path or a broken one, 
   according to the following rules:
    \begin{enumerate}
       \item The edge $e_i$ is curved 
              if and only if $P_i$ is a free point.
       \item Assume that $P_i$ has at least two proximate points in the sequence 
          $(P_i)_{0 \leq i \leq n-1}$. If $P_{i + 1}, P_{i+2}, \dotsc, P_k$ are all the 
          points proximate to $P_i$ (therefore $k\geq i+2$), 
          then the path $[v_{i+1} v_k]$ is straight. Moreover, $[v_{i+1} v_k]$ 
         is a {\bf maximal straight path}, that is, adding any one of its adjacent edges 
         $e_{i+1}= [v_i v_{i + 1}]$ and 
          $e_{k+1}= [v_k v_{k + 1}]$, one does not get a straight path. 
          If $[v_iv_{i+1}]$ is also straight, 
          we say that $[v_{i}v_k]$ is a 
          path which is broken at the {\bf breaking vertex} $v_{i+1}$.   The vertices 
         which are not breaking ones are called {\bf neutral}.      
     \end{enumerate}
   We say that $\epsilon(C)$ is an {\bf Enriques diagram of complexity} $n$ and 
   we denote by $\calE_n$ the set of isomorphism classes of such diagrams. 
 \end{definition}

  \begin{remark} \label{rem:Key}
    Note that for $n \geq 3$, the edge $e_1$ is always curved and the edge 
    $e_{n-1}$ is always straight, as explained before Definition \ref{blowcomp}.
     If a vertex is breaking, then the two adjacent edges 
    are straight. In a dual manner, if an edge is curved, then its 
    vertices are neutral. In particular, only the 
    vertices $\{v_2, \dotsc , v_{n-2} \}$ and the edges 
    $\{e_2, \dotsc , e_{n-2} \}$ may have both decorations 
    when we vary the diagram among the elements of $\calE_n$. 
  \end{remark}
    
    In the previous definition, the attributes ``curved'' and ``straight'' associated to the edges and 
    ``breaking'' or ``neutral'' associated to the vertices are 
    purely combinatorial. Nevertheless, their concrete meaning gives a way to 
    represent an Enriques diagram as a piecewise smooth embedded arc in the plane. 
    
    \begin{example} \label{ex:Enr}
         In Figure \ref{fig:Seqblow} are represented schematically the exceptional divisors of 
         a sequence of point blowing-ups, as well as the associated Enriques diagram. 
         Its complexity is $6$. In order not to charge the drawing, we did not represent 
         the strict transforms of a branch having this embedded resolution process. 
    \end{example}

\begin{figure}[h!] 
\vspace*{6mm}
\labellist \small\hair 2pt 
\pinlabel{$O = P_0$} at 4 344
\pinlabel{$P_1$} at 110 376
\pinlabel{$E(P_0)$} at 115 436
\pinlabel{$P_2$} at 275 356
\pinlabel{$E(P_1)$} at 190 390
\pinlabel{$E(P_0)$} at 240 436
\pinlabel{$P_3$} at 455 385
\pinlabel{$P_4$} at 422 160
\pinlabel{$P_5$} at 123 155
\pinlabel{$E(P_0)$} at 400 436
\pinlabel{$E(P_2)$} at 460 436
\pinlabel{$E(P_1)$} at 350 390
\pinlabel{$E(P_0)$} at 355 255
\pinlabel{$E(P_1)$} at 310 210
\pinlabel{$E(P_2)$} at 475 200
\pinlabel{$E(P_3)$} at 423 114
\pinlabel{$E(P_0)$} at 75 255
\pinlabel{$E(P_1)$} at 10 220
\pinlabel{$E(P_4)$} at 177 138
\pinlabel{$E(P_2)$} at 175 200
\pinlabel{$E(P_3)$} at 133 114

\pinlabel{$v_0$} at 140 -10
\pinlabel{$v_1$} at 195 77
\pinlabel{$v_2$} at 219 -10
\pinlabel{$v_3$} at 282 -10
\pinlabel{$v_4$} at 360 -10
\pinlabel{$v_5$} at 420 -10

\pinlabel{$e_1$} at 132 52
\pinlabel{$e_2$} at 237 52
\pinlabel{$e_3$} at 266 16
\pinlabel{$e_4$} at 320 43
\pinlabel{$e_5$} at 376 20

\endlabellist 
\centering 
\includegraphics[scale=0.55]{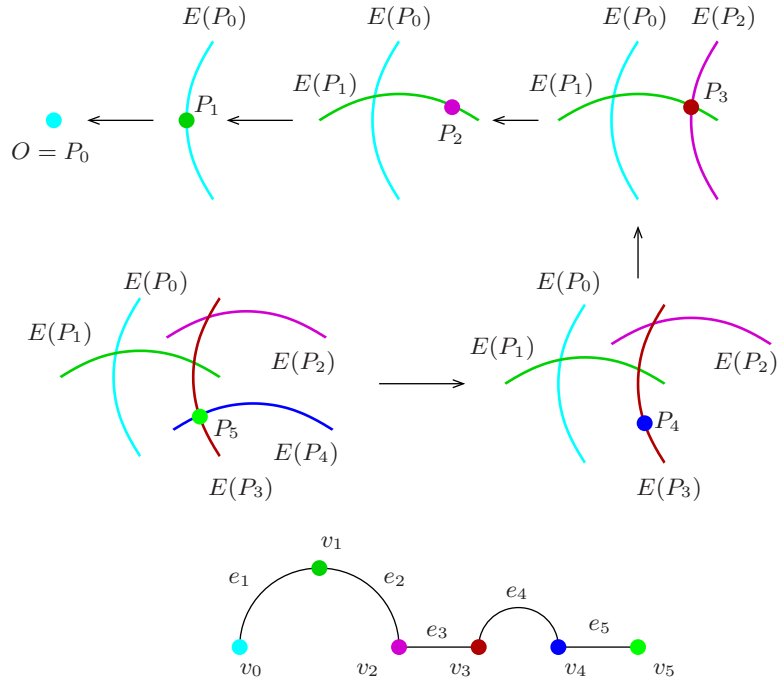} 
\caption{A sequence of blow-ups and its Enriques diagram} 
\label{fig:Seqblow} 
\end{figure}

  \begin{remark}   \label{differdef}
    In the original definition by Enriques, as a rule for plane representation 
    one also supposed that any curved edge  
    formed a smooth arc with the previous edge, be it either curved of 
    straight. Moreover, Enriques chose to draw perpendicularly the two maximal straight 
    segments adjacent to a breaking vertex. Here we do not keep  
    those supplementary conventions, as they do not give more information. Moreover, 
    in this way we gain more flexibility for our drawings. 
  \end{remark}

The following proposition is an immediate consequence of Proposition \ref{proxrel} 
and shows that the Enriques diagram contains the same information as the multiplicity 
sequence $(m_i(C))_{0 \leq i \leq n-1}$. 
Nevertheless, in the sequel it will be important to think about both of them simultaneously: 
the multiplicity sequence will be seen as a {\bf multiplicity function} 
$\underline{m}$ defined on the set of vertices of the Enriques diagram. 
To simplify notations, we will write simply $m_i$ 
instead of $\underline{m}(v_i)$.  We say that $m_0$ is the {\bf initial multiplicity} of 
an Enriques diagram.  Proposition \ref{compMiln} shows that the Milnor number of a plane branch 
is determined by the associated Enriques diagram. Therefore, we will speak also about the 
{\bf Milnor number} $\mu$ of such a diagram. 

\begin{proposition} \label{compmult}
   Consider an Enriques diagram of complexity $n\geq 3$. 
   Then $m_{n-1}=1$ and 
   for each $i \in \{0, \dotsc, n - 1\}$, the multiplicity $m_i$ may be computed in the following way 
   from the multiplicities $(m_j)_{j > i}$: 
     \begin{enumerate}
         \item  If there is no maximal straight path of the form $[v_{i+1} v_j]$, with $j > i + 1$, 
            then:
                 \begin{equation}\label{mult1}m_i = m_{i+1}.\end{equation}
   
         \item \label{second} If $[v_{i+1} v_j]$, with $j > i + 1$, 
         is a maximal straight path of the Enriques diagram, 
         then:
             \begin{equation}\label{multgen}
                    m_i = \sum_{k = i + 1} ^j m_k.
              \end{equation}            
         More precisely:
             \begin{itemize}   
                \item {\bf if $e_{j+1}= [v_j v_{j+1}]$ is curved}, then $m_k=m_j$ 
                  for all $k=i+1,\dotsc,j$ and:  
            \begin{equation}\label{mult2} m_i = (j-i)\cdot  m_{i+1},\end{equation}
               \item {\bf if $e_{j+1}$ is straight}, 
                then $m_k=m_{i+1}$ for all $k=i+1,\dotsc,j-1$ and: 
            \begin{equation}\label{mult3} m_i=m_j+(j-i-1) \cdot m_{i+1}.\end{equation}
               \end{itemize}
           \end{enumerate}
\end{proposition}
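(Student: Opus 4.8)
The plan is to derive everything from the proximity relations of Proposition \ref{proxrel} together with the combinatorial description of proximate points built into Definition \ref{Enriques}. The key preliminary observation is that $P_{i+1}$, being blown up immediately after $P_i$, always lies on $E(P_i)$, so that $P_{i+1} \mapsto P_i$; hence the set of points proximate to $P_i$ consists of $P_{i+1}$ together with whatever satellites lie on $E(P_i)$. By rule (2) of Definition \ref{Enriques}, these satellites, when they exist, are precisely the points $P_{i+2}, \dotsc, P_j$ completing the maximal straight path $[v_{i+1} v_j]$. Thus, if no such path starts at $v_{i+1}$, then $P_{i+1}$ is the only proximate point and Proposition \ref{proxrel} gives $m_i = m_{i+1}$, which is \eqref{mult1}; and if $[v_{i+1} v_j]$ is maximal with $j > i+1$, the proximate points are exactly $P_{i+1}, \dotsc, P_j$, so Proposition \ref{proxrel} yields $m_i = \sum_{k=i+1}^{j} m_k$, which is \eqref{multgen}. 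The equality $m_{n-1}=1$ was already recorded after Definition \ref{multseq}.

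For the refined formulas I would analyse, for each index $k$ with $i+1 \le k \le j-1$, whether $P_k$ has one or two proximate points, equivalently whether a maximal straight path starts at $v_{k+1}$. First I claim that no maximal straight path can start at an interior vertex $v_{k+1}$ of $[v_{i+1} v_j]$, i.e.\ for $i+1 \le k \le j-2$. Indeed, since the diagram is homeomorphic to an interval, such a path would have to run forward along the edges $e_{k+2}, e_{k+3}, \dotsc$; being straight, it is either contained in $[v_{i+1} v_j]$, in which case it extends backwards to $v_{i+1}$ and is not maximal, or it reaches past $v_j$, which forces $e_{j+1}$ to be straight and $[v_{i+1} v_j]$ to be non-maximal. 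Both alternatives contradict the maximality of $[v_{i+1} v_j]$. Consequently each such $P_k$ has the single proximate point $P_{k+1}$, and Proposition \ref{proxrel} gives $m_k = m_{k+1}$.

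It remains to treat the terminal vertex $v_j$, and this is where the two subcases separate. The edge $e_j$ is straight, being an edge of the path, so by Remark \ref{rem:Key} the vertex $v_j$ can be breaking only if $e_{j+1}$ is straight; and if both $e_j$ and $e_{j+1}$ are straight, then $v_j$ must in fact be breaking, for otherwise $[v_{i+1} v_{j+1}]$ would be a straight path and $[v_{i+1} v_j]$ would not be maximal. Hence a maximal straight path starts at $v_j$, equivalently $P_{j-1}$ acquires a second proximate point, if and only if $e_{j+1}$ is straight. In the curved case $P_{j-1}$ has the single proximate point $P_j$, so $m_{j-1}=m_j$, and chaining with the interior equalities gives $m_{i+1}=m_{i+2}=\dotsb=m_j$, whence $m_i=(j-i)\,m_{i+1}$, which is \eqref{mult2}. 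In the straight case the chain of interior equalities reaches only $m_{i+1}=\dotsb=m_{j-1}$, and substitution into \eqref{multgen} gives $m_i=m_j+(j-i-1)\,m_{i+1}$, which is \eqref{mult3}.

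I expect the main obstacle to be precisely the bookkeeping at the terminal vertex: one must argue cleanly that it is exactly the decoration of the edge $e_{j+1}$ leaving the path that governs whether $P_{j-1}$ gains a second proximate point. Geometrically this reflects the fact that when $e_{j+1}$ is straight the point $P_{j+1}$ is the satellite $E(P_{j-1}) \cap E(P_j)$, hence proximate to $P_{j-1}$, whereas when $e_{j+1}$ is curved the point $P_{j+1}$ is free on $E(P_j)$ and lies off $E(P_{j-1})$. The combinatorial argument above packages this geometry through the maximality of straight paths and Remark \ref{rem:Key}, avoiding any explicit reference to the blow-up pictures.
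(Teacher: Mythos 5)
Your proof is correct and follows exactly the route the paper intends: the paper states Proposition \ref{compmult} without proof, presenting it as an immediate consequence of the proximity relations of Proposition \ref{proxrel}, and your argument supplies precisely the missing details --- identifying the proximate points of $P_i$ with the vertices of the maximal straight path starting at $v_{i+1}$ via rule (2) of Definition \ref{Enriques}, then handling the interior vertices and the terminal vertex $v_j$ separately. In particular, your analysis of the terminal vertex (the decoration of $e_{j+1}$ governing whether $P_{j-1}$ gains a second proximate point, hence whether the chain of equal multiplicities extends to $m_j$) is exactly the bookkeeping that distinguishes \eqref{mult2} from \eqref{mult3}.
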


As explained  in \cite[Sections 3.5, 3.6, 5.5]{W 04}, 
the following invariants associated to 
a complex plane branch contain the same information:
   \begin{itemize} 
        \item its multiplicity sequence;
        \item its sequence of generic Newton-Puiseux exponents;
        \item the weighted dual graph of the exceptional divisor of its minimal embedded resolution;
        \item its embedded topological type (that is, the topology of the associated knot in the 
           $3$-sphere). 
  \end{itemize}
  One may also consult \cite{BK 86} for the relation between the last three view-points 
and \cite{PP 11} for the relation between the third one and the Enriques diagram.
  
 Therefore, all such objects parametrize the embedded topological types of plane branches. Nevertheless, 
 as the notion of topological type is specific to $\C$, while the other ones are applicable to 
 curves defined over arbitrary algebraically closed fields with characteristic zero, 
 we will speak about the set 
 of {\em combinatorial types} instead of {\em embedded topological types} of plane branches. 

In general, when different logically equivalent encodings of some class of objects are available, 
they are 
not equivalent from the viewpoint of adaptability to specific situations. For instance, 
in this paper we will show that Enriques diagrams are especially adapted to emphasize a 
hidden lattice structure on the set of combinatorial types of plane branches with fixed 
blow-up complexity.

\medskip

\section{The number of combinatorial types of branches with fixed blow-up complexity}
\label{sec:Card}

We begin the study of the sets $\calE_n$ of combinatorial types of branches of 
fixed blow-up complexity. 
In this section we show that their cardinals are Fibonacci numbers. 
\medskip

\begin{theorem} \label{cardtypes}
  The number of combinatorial types of plane branches with 
  blow-up complexity $n\geq 3$ is 
   the Fibonacci number $F_{2n-4}$.
\end{theorem}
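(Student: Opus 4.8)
The plan is to translate the enumeration of $\calE_n$ into a weighted count over binary edge-labellings and then evaluate it with a $2\times 2$ transfer matrix that turns out to be the square of the Fibonacci matrix. First I would read off from Definition \ref{Enriques} and Remark \ref{rem:Key} the purely combinatorial data that an element of $\calE_n$ carries: a word $w=(w_1,\dots,w_{n-1})\in\{C,S\}^{n-1}$ recording whether each edge $e_i$ is curved ($C$) or straight ($S$), subject to $w_1=C$ and $w_{n-1}=S$, together with a neutral/breaking mark on each vertex. By Remark \ref{rem:Key} a vertex may be breaking only when both of its incident edges are straight, and a curved edge forces its endpoints to be neutral; conversely I would argue that, for a fixed admissible word $w$, the neutral/breaking marks at the vertices flanked by two straight edges may be chosen freely and independently, each choice producing a genuine and distinct diagram of $\calE_n$. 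Granting this, the number of diagrams supported on a given word $w$ is exactly $2^{b(w)}$, where $b(w)=\#\{\,i: w_i=w_{i+1}=S\,\}$ is the number of $SS$-adjacencies of $w$, so that
\begin{equation*}
 |\calE_n| \;=\; \sum_{\substack{w\in\{C,S\}^{n-1}\\ w_1=C,\ w_{n-1}=S}} 2^{\,b(w)} .
\end{equation*}

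The second step is to evaluate this sum by a transfer-matrix argument. Reading $w$ from left to right and weighting each step $w_i\to w_{i+1}$ by $2$ exactly when it is an $S\to S$ step and by $1$ otherwise, the product of the weights along $w$ is $2^{b(w)}$. Hence, with the matrix
\begin{equation*}
 T=\begin{pmatrix} 1 & 1\\ 1 & 2\end{pmatrix}
\end{equation*}
whose rows and columns are indexed by $C,S$ (so that $T_{SS}=2$ and all other entries are $1$), the sum above is the $(C,S)$-entry of $T^{\,n-2}$, the exponent $n-2$ being the number of transitions in a word of length $n-1$. I would then observe the identity $T=Q^2$ for the Fibonacci matrix $Q=\begin{pmatrix}0&1\\1&1\end{pmatrix}$, whence $T^{\,n-2}=Q^{\,2n-4}$. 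Since $(Q^{m})_{C,S}=F_{m}$ by the classical closed form $Q^m=\left(\begin{smallmatrix}F_{m-1}&F_m\\ F_m&F_{m+1}\end{smallmatrix}\right)$, this gives $|\calE_n|=(Q^{2n-4})_{C,S}=F_{2n-4}$, as claimed. Equivalently, one may note that the characteristic polynomial $\lambda^2-3\lambda+1$ of $T$ yields the recursion $|\calE_n|=3\,|\calE_{n-1}|-|\calE_{n-2}|$, which $F_{2n-4}$ also satisfies, and check the base cases $|\calE_3|=F_2=1$ and $|\calE_4|=F_4=3$.

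I expect the genuine content, and the main obstacle, to lie entirely in the first step: proving that the combinatorial constraints of Remark \ref{rem:Key} are the only ones, i.e.\ that every admissible word $w$ together with every choice of neutral/breaking marks at its $SS$-vertices is realised by an actual plane branch. The cleanest way I see to secure this is to run Proposition \ref{compmult} backwards on such a decorated diagram: computing the candidate multiplicities $m_{n-1}=1, m_{n-2},\dots,m_0$ from right to left, one must check that the outcome is a sequence of positive integers which is nonincreasing and satisfies the proximity relations, so that it is the multiplicity sequence of a branch; irreducibility is automatic because the underlying graph is an interval rather than a branching tree. The independence of the breaking/neutral choices --- which is what prevents any ``no two consecutive breaks'' type restriction from lowering the count --- is exactly the freedom of the continued-fraction entries describing a chain of satellite points, and it is this point that must be nailed down before the transfer-matrix bookkeeping applies.
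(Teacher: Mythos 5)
Your argument is correct, and it organizes the count differently from the paper, although the same two-dimensional linear recursion is at work underneath. The paper splits $\calE_n=\calA_n\sqcup\calB_n$ according to whether $v_{n-2}$ is neutral or breaking, shows that each diagram of $\calA_n$ extends in $3$ ways and each diagram of $\calB_n$ in $2$ ways to a diagram of $\calE_{n+1}$, and solves the coupled recursion
\begin{equation*}
|\calA_{n+1}|=2|\calA_n|+|\calB_n|,\qquad |\calB_{n+1}|=|\calA_n|+|\calB_n|
\end{equation*}
by induction, obtaining $|\calA_n|=F_{2n-5}$ and $|\calB_n|=F_{2n-6}$. Its transition matrix $\left(\begin{smallmatrix}2&1\\1&1\end{smallmatrix}\right)$ is conjugate, by swapping the two basis vectors, to your transfer matrix $T=\left(\begin{smallmatrix}1&1\\1&2\end{smallmatrix}\right)$, and both equal $Q^2$; so the difference is one of packaging. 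Your weighted sum $\sum_w 2^{b(w)}$ with the closed-form evaluation via $T^{n-2}=Q^{2n-4}$ avoids induction altogether and makes it conceptually transparent why the \emph{even-indexed} (bisected) Fibonacci numbers appear. The paper's split, on the other hand, produces the intermediate cardinalities $|\calA_n|$ and $|\calB_n|$, which it genuinely needs later: Proposition \ref{selfdual} identifies the self-dual types of odd complexity $n$ with $\calA_{(n+3)/2}$ and quotes exactly that computation, so the paper's bookkeeping is not wasted.

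Concerning the realizability issue you single out as the main obstacle --- that the local rules of Remark \ref{rem:Key} are the \emph{only} constraints, so that for each admissible word the breaking/neutral marks at $SS$-vertices can be chosen freely and independently --- you are right that this is the substantive input, but the paper's own proof relies on it just as implicitly: it only says ``recall Remark \ref{rem:Key}'', which gives necessity of the local constraints, while sufficiency is made explicit only in Lemma \ref{lem:Image} (the characterization of codes), stated there as following easily from Definition \ref{Enriques}. The standard justification is essentially what you sketch, or more directly: the decorations prescribe, step by step, which point of the last exceptional curve $E(P_i)$ is blown up next --- a free point when $e_{i+1}$ is curved, and one of the at most two satellite points of $E(P_i)$ when $e_{i+1}$ is straight, the choice between those two being recorded by $v_i$ breaking or neutral; every such sequence of choices is realized by taking a smooth curvetta transverse to the last exceptional curve at a free point and blowing down to $(\cS,O)$, and minimality of the resolution is exactly the condition $w_{n-1}=S$. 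So your proof is complete to the same standard of rigor as the paper's.
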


\begin{proof}
We decompose each set 
$\calE_n$ into the disjoint union of two subsets $\calA_n$ and $\calB_n$, and we compute by 
induction the pair of cardinals $(  | \calA_n| ,  | \calB_n| )$, where:

\begin{itemize}
   \item  $\calA_n$ is the set of Enriques diagrams of complexity $n$ whose vertex 
     $v_{n-2}$ is neutral. 

   \item $\calB_n$ is the set of Enriques diagrams of complexity $n$ whose vertex 
     $v_{n-2}$ is breaking. 
\end{itemize}

Each diagram of $\calE_{n+1}$ can be obtained from a diagram of $\calE_n$, 
which is determined by the decorations of the vertices 
$v_2, \dotsc, v_{n-2}$ and of the edges $e_2, \dotsc, e_{n-2}$, 
by adding the information about the decorations of $v_{n-1}$ and $e_{n-1}$. 
We count the number of elements of $\calE_{n+1}$ by looking at  the possible 
ways to complete a given diagram of $\calE_n$ (recall Remark \ref{rem:Key}). 
This number changes from $\calA_n$ to $\calB_n$:

\begin{itemize}
    \item Each graph of $\calA_n$ can be completed in 3 ways as an Enriques diagram of 
       complexity $n+1$: {\em either} with a neutral vertex $v_{n-1}$, and with an edge $e_{n-1}$ 
       {\em either} curved {\em or} straight, {\em or} with a breaking vertex $v_{n-1}$ and a 
        straight edge. 

    \item Each graph of $\calB_n$ can be completed in 2 ways, {\em either} with a breaking vertex 
       {\em or} with a neutral vertex $v_{n-1}$, but always with a straight edge $e_{n-1}$, 
       such that $v_{n-2}$ keeps being a breaking vertex.
\end{itemize}

We deduce that: 
    $$
     \left\{
     \begin{array}{l}
      | \calA_{n+1} | = 2  | \calA_n| +  | \calB_n| , \\
      | \calB_{n+1}| = | \calA_n | + | \calB_n|,
     \end{array} \right.
         \mbox{ for all } n \geq 3.
   $$

As $| \calA_3 |  = 1$ and $| \calB_3 |  = 0$, which is seen immediately by inspection, the 
previous recursive relations allow to prove immediately by induction on $n\geq 3$ that:
   $$
     \left\{
     \begin{array}{l}
      | \calA_n | = F_{2n -5} , \\
      | \calB_n | = F_{2n-6},
     \end{array} \right.
         \mbox{ for all } n \geq 3.
   $$
    
 Therefore  $| \mathcal{E}_n | =  | \calA_n | + | \calB_n| = F_{2n -4}$. 
\end{proof}

In Theorem \ref{theo:mult} we will see a second appearance of Fibonacci numbers 
related to the sets $\calE_n$.

\medskip
\section{Multiplicity increasing operators}
\label{sec:Mult}

In this section we introduce two types of partially defined operators on Enriques diagrams and 
we prove that they make strictly increase the multiplicity function and the Milnor number. 
Moreover, we describe the cases when also the initial multiplicity increases strictly.  
\medskip

\begin{definition} \label{def:operators}
    Let $n \geq 3$. 
   We introduce the following partially defined operators on $\calE_n$ 
   (see Figures \ref{fig:Strdef} and \ref{fig:Brdef}, in which are 
indicated the different possibilities for the adjacent edges):
 \begin{enumerate}
   \item[(i)] Suppose that the diagram $\epsilon \in \mathcal{E}_n$ 
   is such that its edge $e_p$ is curved for some $p \geq 2$. 
   Let  $s_p(\epsilon)$ be the 
   diagram obtained from $\epsilon$ by declaring the edge $e_p$ straight, 
   alined with $e_{p-1}$ if this is straight in $\epsilon$ and declaring the vertex $v_p$ 
   breaking if $e_{p+1}$ is straight. We say that $s_p$ is the {\bf straightening operator} 
   at the edge $e_p$. 
   
   \item[(ii)] Suppose that the diagram $\epsilon \in \mathcal{E}_n$ 
   is such that the path $[v_{p-1} v_{p+1}]$ is straight for some $p \in \{2, \dotsc , n-2\}$. 
   Let $b_p(\epsilon)$ be the diagram obtained from $\epsilon$ by declaring $v_p$ 
   a breaking vertex. We say that $b_p$ is the {\bf breaking operator} at the vertex $v_p$. 
  \end{enumerate}
  
  In both cases, all the decorations of the vertices and the edges which are not 
  mentioned are left unchanged. 
\end{definition}

One has therefore the straightening operators $s_2, \dotsc , s_{n-2}$ and the breaking ones 
  $b_2, \dotsc , b_{n-2}$. 
They are {\em partially defined} in the sense that $s_p(\epsilon)$ 
  is defined only if  $e_p$ is a curved edge of the diagram $\epsilon$ and 
  $b_p(\epsilon)$ is defined only if $v_p$ is a neutral vertex between two straight edges.

\begin{figure}[h!] 
\vspace*{6mm}
\labellist \small\hair 2pt 
\pinlabel{$s_p$} at 183 114

\pinlabel{$v_{p-2}$} at 30 144
\pinlabel{$v_{p-2}$} at 27 64
\pinlabel{$v_{p-1}$} at 100 68
\pinlabel{$v_p$} at 224 68
\pinlabel{$v_{p + 1}$} at 315 65
\pinlabel{$v_{p + 1}$} at 267 34
\pinlabel{notice this} at 123 7
\pinlabel{disymmetry} at 123 -10

\endlabellist 
\centering 
\includegraphics[scale=0.60]{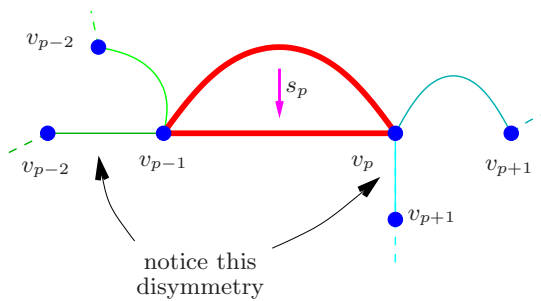}
\vspace*{5mm} 
\caption{The straightening operator $s_p$} 
\label{fig:Strdef} 
\end{figure}

\begin{figure}[h!] 
\vspace*{6mm}
\labellist \small\hair 2pt 
\pinlabel{$b_p$} at 263 24

\pinlabel{$v_{p-1}$} at 28 -10
\pinlabel{$v_{p-1}$} at 380 -10
\pinlabel{$v_p$} at 90 -10
\pinlabel{$v_p$} at 447 -10
\pinlabel{$v_{p + 1}$} at 146 -10
\pinlabel{$v_{p + 1}$} at 470 60

\endlabellist 
\centering 
\includegraphics[scale=0.60]{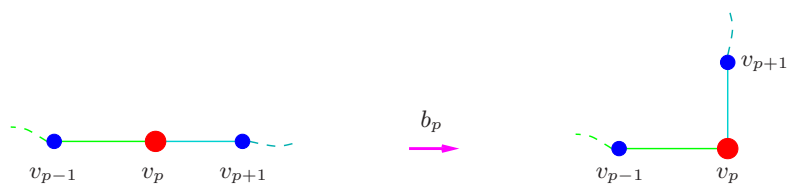}
\vspace*{5mm} 
\caption{The breaking operator $b_p$} 
\label{fig:Brdef} 
\end{figure}

  The next theorem states that the multiplicities and Milnor numbers increase when one applies 
  either type of operator. Moreover, we describe the situations when the increase is not strict.

\begin{theorem} \label{theo:incr}
   Let $\epsilon \in \calE_n$ be an Enriques diagram with multiplicity function 
     $\underline{m}$. Denote by $\epsilon' \in \calE_n$ a diagram obtained 
   from $\epsilon$ by applying either a straightening or a breaking operator. 
   Denote by $\underline{m}'$ its multiplicity function. Then:
   \begin{enumerate}
     \item   \label{strictfunct}
         $\underline{m}' > \underline{m}$ (that is, $m_j' \geq m_j$ for all $j \in 
         \{0, \dotsc , m_{n-1}\} $ and there is at least one strict inequality).  
     
     \item  \label{caseq}
   $m_0' \geq m_0$. 
     The two multiplicities are equal if and only if $\epsilon'$ 
     is obtained from $\epsilon$ by applying 
     a breaking operator $b_p$ and if moreover $e_{p-1}$ 
   is a curved edge of $\epsilon$. 
   
     \item \label{strictMiln}
     The Milnor number of the diagram $\epsilon'$ is strictly greater than the 
        Milnor number of the starting diagram $\epsilon$. 
    \end{enumerate}
\end{theorem}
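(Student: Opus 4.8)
The plan is to reduce every assertion to the way the multiplicity function is recomputed from right to left via the proximity relations of Propositions \ref{proxrel} and \ref{compmult}, and then to obtain part (3) directly from the Milnor formula of Proposition \ref{compMiln}. First I would note that both operators modify only the decorations carried by the edge $e_p$ and the vertex $v_p$, leaving untouched every edge $e_q$ and vertex $v_q$ with $q>p$. Since, by Proposition \ref{compmult}, the multiplicity $m_i$ depends only on the decorations at positions strictly greater than $i$ and on the multiplicities $(m_j)_{j>i}$, a downward induction starting from $m_{n-1}=1$ gives $m_q'=m_q$ for every $q\geq p$. Thus the entire effect of the operator is concentrated to the left of $v_p$, and it suffices to follow how the set of points proximate to each $P_i$ with $i<p$ is altered.

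Second, I would pin this alteration down as a single explicit change of proximities. Under the straightening operator $s_p$ the point $P_p$ stops being free and becomes a satellite: exactly one new relation $P_p\mapsto P_j$ is created, where $P_j$ is the origin of the straight path that $e_p$ now prolongs (so $j=p-2$ when $e_{p-1}$ is curved, and $j$ is the origin of the maximal straight path containing $e_{p-1}$ when $e_{p-1}$ is straight); the possible breaking of $v_p$ only serves to preserve intact the maximal straight path issuing to the right of $v_p$, so that no proximity among the indices $\geq p$ is disturbed. Feeding the new summand $m_p\geq 1$ into $m_j=\sum_{P_l\mapsto P_j}m_l$ while the remaining summands stay fixed yields $m_j'=m_j+m_p>m_j$. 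Under the breaking operator $b_p$, writing $[v_c v_d]$ for the maximal straight path through the neutral vertex $v_p$ (so $c\leq p-1$ and $p+1\leq d$), the break reassigns the origin of $P_{p+1},\dots,P_d$ from $P_{c-1}$ to $P_{p-1}$; hence $m_{p-1}$ acquires the summands $m_{p+1}+\dots+m_d\geq 1$ and strictly increases. In both cases a single vertex strictly gains, which already produces the required strict inequality for part (1).

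Third, I would propagate monotonicity to the remaining vertices by downward induction: the sum defining $m_i$ is monotone in $(m_l)_{l>i}$, and for $i$ below the index where the gain occurs the proximity set of $P_i$ is either unchanged or modified only at the origins $c-1$ and $p-1$ (for $b_p$) in a way that does not lower the total; since $P_{i+1}\mapsto P_i$ always holds, a strict gain at one vertex forces $m_i'\geq m_i$ for all smaller $i$. This finishes part (1). For part (2) I would examine the far-left origin: using that the multiplicity is constant along a maximal straight path (equation \eqref{mult2}, the breaking endpoint being covered by \eqref{mult3}), a direct computation shows $m_{c-1}'\geq m_{c-1}$ with equality exactly when $p=c+1$. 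In that case the gain at $m_{p-1}=m_c$ is absorbed at $m_{c-1}$, but it still reaches $m_0$ through the maximal straight path adjacent to $v_c$ on the left, \emph{unless} $v_c$ is a neutral left endpoint, i.e.\ unless $e_{c}=e_{p-1}$ is curved (for then $P_c$ is free and feeds only $m_{c-1}$); for $s_p$ the gain is produced by adding a summand and is never absorbed, so $m_0$ always increases strictly. Collecting the cases gives exactly the equality criterion $e_{p-1}$ curved of part (2). Finally part (3) is immediate: by Proposition \ref{compMiln} one has $\mu=\sum_i m_i(m_i-1)$, the map $x\mapsto x(x-1)$ is strictly increasing on the positive integers, and part (1) gives $m_i'\geq m_i$ with at least one strict inequality, whence $\mu'>\mu$.

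The hard part will be the bookkeeping in the second and third steps. One must check, separately for the configurations of the adjacent edges $e_{p-1},e_{p+1}$ drawn in Figures \ref{fig:Strdef} and \ref{fig:Brdef}, that the only proximity relations that change are the ones described, and in particular that breaking $v_p$ creates no spurious changes among the higher indices. The subtlest point is the propagation analysis underlying part (2): deciding whether the unique strict gain survives all the way down to $m_0$ or is absorbed at an intermediate origin requires following the chain of origins leftward and isolating precisely the configuration $p=c+1$ with $e_{p-1}$ curved.
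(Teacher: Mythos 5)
Your proposal is correct, and its skeleton matches the paper's proof: both first show $m_q'=m_q$ for all $q\geq p$, then locate where the recursion for the remaining multiplicities changes, isolate the same critical configuration in which the gain can be absorbed (a breaking operator with $p=c+1$, the paper's case $l=1$), split that case according to whether $e_{p-1}$ is curved or straight, and deduce part (3) identically from Proposition \ref{compMiln}. The genuine difference is the bookkeeping lens. The paper computes throughout with the explicit formulas (\ref{mult1})--(\ref{mult3}) of Proposition \ref{compmult}, comparing $m_i'$ with $m_i$ vertex by vertex via the auxiliary lengths $l,h,k$, and propagating strictness with the implication (\ref{eq:lemprox}). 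You instead recast each operator as an explicit surgery on the proximity relation itself: $s_p$ creates exactly one new relation $P_p\mapsto P_j$, a pure gain of a summand in Proposition \ref{proxrel}, hence never absorbed; $b_p$ transfers the points $P_{p+1},\dotsc,P_d$ from being proximate to $P_{c-1}$ to being proximate to $P_{p-1}$, a reassignment, which is exactly why absorption at $P_{c-1}$ is possible. This framing, nowhere made explicit in the paper, explains \emph{a priori} why equality of initial multiplicities can only occur for breaking operators, and it reduces the equality criterion to following a single transferred block of summands. What it costs is precisely what you flag as the hard part: one must verify that these are the \emph{only} proximity changes (in particular that declaring $v_p$ breaking under $s_p$ leaves all proximities among indices $\geq p$ intact --- this is the role of the disymmetry in Definition \ref{def:operators}), and the absorption computation at $m_{c-1}$ that you defer to ``a direct computation'' is the same arithmetic as the paper's comparison of $(1+lh)\,m_p+l\,m_{p+h}$ with $(l+h)\,m_p+m_{p+h}$; likewise your ``escape through the left straight path'' when $v_{p-1}$ is breaking is the paper's third subcase governed by the length $k$. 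So the two routes end up proving the same estimates; yours organizes them around proximity surgery, which is conceptually cleaner, while the paper's formula-driven computation is more self-contained once Proposition \ref{compmult} is in place.
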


\begin{proof}  The proof of (\ref{strictfunct}) and (\ref{caseq}) follows from 
 the repeated use of the proximity relations stated in Proposition \ref{proxrel}, under the more 
explicit forms of Proposition \ref{compmult}: the relations (\ref{mult1})-(\ref{mult3}). Let us 
develop this. 

Assume that $\epsilon' = s_p(\epsilon)$ or $\epsilon' = b_p(\epsilon)$ for some 
$p \in \{2, \dotsc , n-2 \}$. This implies, by Proposition \ref{proxrel}, that 
$m_i' = m_i$ for all $i \in \{p,  \dotsc , n-1\}$.

Let $l > 0$ be maximal such that $[v_{p -l} v_{p-1}]$ is a straight path of the 
diagram $\epsilon$. Notice that, by Proposition \ref{compmult}, the rules of 
computation of $m_i$ and $m_i'$ starting from the knowledge of the multiplicities 
at the vertices of higher subindex are the same if $i < p -l-1$. Therefore, 
in order to prove that $\underline{m}' > \underline{m}$, it will be enough to 
check the inequalities $m_i' \geq m_i$ for all $i \in \{p -l-1, \dotsc , p -1 \}$. 
We will use the following immediate consequence of Proposition \ref{proxrel}:
   \begin{equation}  \label{eq:lemprox}
        \mbox{ if } m_i' >  m_i \mbox{ for some } i \leq p - l -1, \mbox{ then } 
            m_j ' > m_j \mbox { for all } j \in \{ 0 ,  \dotsc , i \}.
   \end{equation}

 We treat now separately  the two types of operators.

\medskip 
{\bf $*$ The case of a straightening operator}: assume that $\epsilon'= s_p(\epsilon)$  
 (see  Figure \ref{fig:Straightopbis}).  
 \medskip

\begin{figure}[h!] 
\vspace*{6mm}
\labellist \small\hair 2pt 
\pinlabel{$s_p$} at 292 114
\pinlabel{$e_p$ in $\epsilon$} at 290 155
\pinlabel{$e_p$ in $\epsilon'$} at 280 64
\pinlabel{these edges may be} at 184 -10
\pinlabel{either curved or straight} at 184 -30

\pinlabel{$v_{p-l - 1}$} at -20 33
\pinlabel{$v_{p-l}$} at 10 107
\pinlabel{$v_{p-2}$} at 130 67
\pinlabel{$v_{p-1}$} at 200 70
\pinlabel{$v_{p + 1}$} at 380 34
\pinlabel{$v_p$} at 370 87

\endlabellist 
\centering 
\includegraphics[scale=0.60]{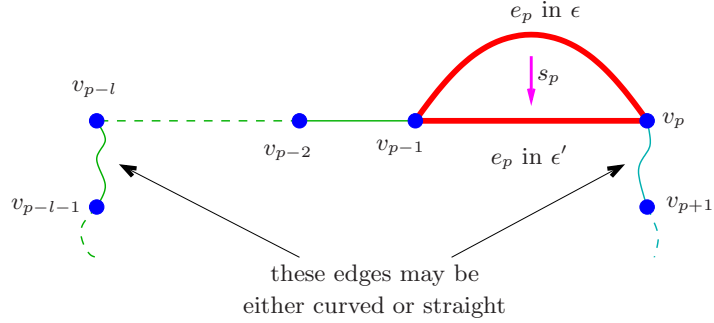}
\vspace*{5mm} 
\caption{The diagrams $\epsilon$ and $\epsilon'= s_p(\epsilon)$} 
\label{fig:Straightopbis} 
\end{figure}

 By (\ref{multgen}) and the fact that $m_i' = m_i$ for all $i \geq p$, we get 
 $m_{p-1}' = m_{p-1}$. We deduce from (\ref{mult1}) 
        that $m_j' = m_{p-1}' = m_{p-1} =  m_j$ for all $ j \in \{ p - l, \dotsc, p - 1 \}$. Then, by 
        (\ref{mult3}):
     $$m_{p - l -1}' = m_p' + l \cdot m_{p -l}' > l \cdot m_{p -l}' = l \cdot m_{p -l} = m_{p - l -1}.$$
          By (\ref{eq:lemprox}), we see that 
                 $m_j' > m_j$ for all $j \in \{ 0, \dotsc , p -l -1 \}.$
               This ends the proof for this operator.

\medskip 
 {\bf $*$ The case of a breaking operator}: assume that $\epsilon'= b_p(\epsilon)$ 
    (see Figure \ref{fig:Breakopbis}). 
    \medskip

\begin{figure}[h!] 
\vspace*{10mm}
\labellist \small\hair 2pt 
\pinlabel{$b_p$} at 110 150
\pinlabel{$v_p$} at 170 310
\pinlabel{$v_p$} at 260 33
\pinlabel{$v_{p-1}$} at 114 310
\pinlabel{$v_{p-1}$} at 180 70
\pinlabel{$v_{p + 1}$} at 222 310
\pinlabel{$v_{p + 1}$} at 265 102
\pinlabel{$v_{p -l}$} at 6 310
\pinlabel{$v_{p-l}$} at 73 70
\pinlabel{$v_{p -l-1}$} at -20 250
\pinlabel{$v_{p -l -1}$} at 45 15
\pinlabel{$v_{p + h}$} at 312 310
\pinlabel{$v_{p + h}$} at 260 175
\pinlabel{$v_{p + h + 1}$} at 350 250
\pinlabel{$v_{p + h + 1}$} at 310 180

\pinlabel{these edges may be} at 158 246
\pinlabel{either curved or straight} at 158 226

\endlabellist 
\centering 
\includegraphics[scale=0.60]{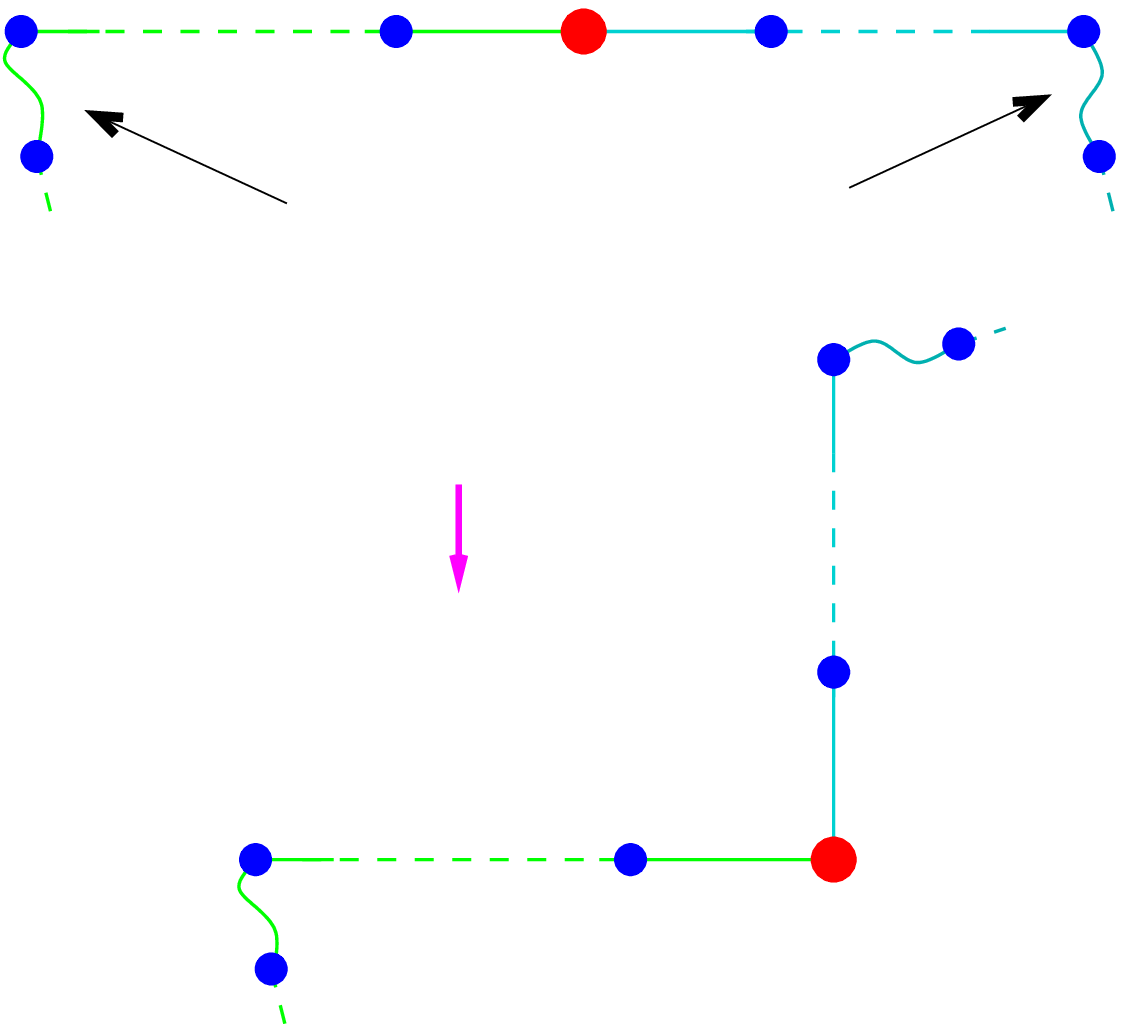}
\vspace*{1mm} 
\caption{The diagrams $\epsilon$ and $\epsilon'= b_p(\epsilon)$} 
\label{fig:Breakopbis} 
\end{figure} 

 Let $h > 0$ be maximal such that $[v_p v_{p + h}]$ is a straight interval of the 
diagram $\epsilon$ (and therefore also of the diagram $\epsilon'$). 
        
        By (\ref{mult3}), $m_{p-1}' = m_{p + h}' + h \cdot m_p' > m_p' = 
            m_p = m_{p-1}$. From (\ref{mult1}) we deduce that $m_j' = m_{p-1}' > m_{p-1} = m_j$ 
            for all $j \in \{ p-l, \dotsc, p -1 \}$. 
            
            As a consequence: $m_{p -l -1}' = m_p' + l \cdot m_{p -l}' = m_p' + l \cdot m_{p-1}' = 
                m_p' + l\cdot ( m_{p+h}' + h \cdot m_p') = 
                (1 + lh)\cdot m_p' + l \cdot m_{p + h}' =   (1 + lh)\cdot m_p + l \cdot m_{p + h} 
                \geq (l + h) \cdot m_p + m_{p + h} = m_{p -l -1}$. 
               
              Therefore, $m_{p -l -1}'  \geq m_{p -l -1}$, with an equality precisely when 
             $1 + lh = l + h$ and $l =1$ hold simultaneously. Therefore, 
              $m_{p -l -1}'  = m_{p -l -1}$ if and only if $l=1$. 
             
             Let us consider now two subcases. 
             
             \medskip
             
             \begin{itemize}
                \item {\bf Assume that $l > 1$.} Then $m_{p -l -1}'  > m_{p -l -1}$, and 
                    (\ref{eq:lemprox}) implies that 
                 $m_j' > m_j$ for all $j \in \{0, \dotsc, p - l -1 \}$. 
                
                \item {\bf Assume that $l =1$.} We consider again two subcases:
                
                     \begin{itemize}
                          \item {\bf Assume that $e_{ p -1}$ is curved, that is, that $v_{p-1}$ 
                               is neutral.} 
                             Therefore 
                              $m_{p-2}'= m_{p-2}$. By Proposition \ref{compmult}, 
                              we see that for any $j \in \{0, \dotsc, p - 2 \}$, 
                              the formulae expressing $m_j$ and $m_j'$ in terms 
                              of the multiplicities $m_{i > j}$ and $m_{i > j}'$ are the same 
                              and involve only subindices 
                              $i \leq p-2$.  This implies, by descending induction on $j$, 
             that $m_j' = m_j$ for all $j \in \{0, \dotsc, p - 2 \}$.

                          \item {\bf Assume that $e_{ p -1}$ is straight,
                              that is, that $v_{p-1}$ is breaking}  (see Figure \ref{fig:Lastcase}). 
                              Let  $k \geq 2$ be maximal such that 
                             $[v_{p-k} v_{p - 1}]$ is a straight interval of the diagram $\epsilon$.

\begin{figure}[h!] 
\vspace*{6mm}
\labellist \small\hair 2pt 
\pinlabel{$s_p$} at 287 124
\pinlabel{this edge may be} at 260 0
\pinlabel{either curved or straight} at 260 -20

\pinlabel{$v_{p-k - 1}$} at 24 94
\pinlabel{$v_{p-k - 1}$} at 370 94
\pinlabel{$v_{p-k}$} at 97 73
\pinlabel{$v_{p-k}$} at 450 73
\pinlabel{$v_{p-1}$} at 41 158
\pinlabel{$v_{p-1}$} at 390 158
\pinlabel{$v_{p + 1}$} at 190 138
\pinlabel{$v_{p + 1}$} at 465 207
\pinlabel{$v_p$} at 135 138
\pinlabel{$v_p$} at 492 138

\endlabellist 
\centering 
\includegraphics[scale=0.60]{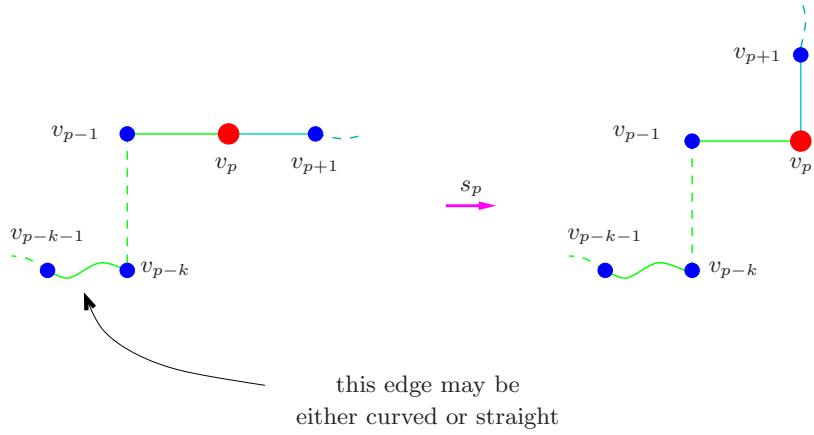}
\vspace*{5mm} 
\caption{The breaking operation when $v_{p-1}$ is a breaking vertex} 
\label{fig:Lastcase} 
\end{figure}

            By (\ref{mult1}), we get that 
            $m_j' = m_j$ for all $j \in \{p-k, \dotsc, p - 2 \}$. 
            But then 
              $m'_{p -  k -1} = m_{p -1}' + (k-1) \cdot m_{p -k}' > m_{p-1} + (k-1) \cdot m_{p-k}$, 
              which by (\ref{eq:lemprox}) implies that 
              $m_j' > m_j \mbox{ for all } j \in \{0, \dotsc, p-k-1 \}.$

                     \end{itemize}
              \end{itemize}

       This finishes the proof of the points (\ref{strictfunct}) and (\ref{caseq}) of the theorem. 
    Point (\ref{strictMiln}) is then a direct consequence of them and of Proposition \ref{compMiln}. 
  \end{proof}

As a consequence of the previous proof, one may describe the set of vertices at which the multiplicity 
function increases {\em strictly}:

\begin{proposition} \label{precision}
   Let $\epsilon, \epsilon' \in \calE_n$ be such that either $\epsilon' =s_p(\epsilon)$ or 
    $\epsilon' =b_p(\epsilon)$. Let 
   $l \geq 1$ be maximal such that $[v_{p-l} v_p]$ is straight and, 
   in the case where $l=1$ and $e_{p-1}$ is straight, $k \geq 2$ is maximal 
   such that $[v_{p-k} v_{p-1}]$ is straight.  Denote also by 
   $J\subset \{0, \dotsc, n-1\} $ the set of indices $i$ such that 
   $m_i' > m_i$. Then:
     \begin{itemize}
         \item  $J = \{0, \dotsc, p-l-1 \}$ if $\epsilon' = s_p(\epsilon)$.
                  
         \item $J = \{0, \dotsc, p-1 \}$ if $\epsilon' = b_p(\epsilon)$ and $l >1$.
         
         \item $J = \{p-1 \}$ if $\epsilon' = b_p(\epsilon)$, $l=1$ and $e_{p-1}$ is curved.
         
         \item $J = \{0, \dotsc, p-k-1 \} \cup \{ p-1 \}$ if $\epsilon' = b_p(\epsilon)$, $l=1$ and 
              $e_{p-1}$ is straight.
     \end{itemize}
\end{proposition}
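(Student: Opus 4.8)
The plan is to read the four assertions directly off the computations already performed in the proof of Theorem \ref{theo:incr}. That proof does more than establish $\underline{m}' > \underline{m}$: at each relevant vertex it records whether the inequality $m_i' \geq m_i$ is an equality or is strict. So no multiplicity computation needs to be redone; the task is to reorganize those conclusions and, crucially, to check that the parameters $l$ and $k$ of the present statement coincide with the homonymous quantities occurring there.

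First I would isolate two structural facts used throughout. By Proposition \ref{proxrel}, applying $s_p$ or $b_p$ leaves the multiplicities at $v_p, \dots, v_{n-1}$ unchanged, so in every case $J \subseteq \{0, \dots, p-1\}$. Second, the propagation principle (\ref{eq:lemprox}): once $m_i' > m_i$ holds for an index $i$ lying below the relevant straight block, strict increase is forced at every $j \leq i$. Together these reduce the problem to locating, inside $\{0, \dots, p-1\}$, the largest index at which strict increase sets in, and to identifying the finitely many ``plateau'' indices on which equality persists.

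Then I would transcribe the four cases from the proof of Theorem \ref{theo:incr}. For $\epsilon' = s_p(\epsilon)$, that proof gives $m_j' = m_j$ for $j \in \{p-l, \dots, p-1\}$ and $m_{p-l-1}' > m_{p-l-1}$, so by (\ref{eq:lemprox}) strict increase holds on all of $\{0, \dots, p-l-1\}$, yielding exactly $J = \{0, \dots, p-l-1\}$. For $\epsilon' = b_p(\epsilon)$ one always has $m_{p-1}' > m_{p-1}$ via (\ref{mult3}), and the argument then splits on $l$: if $l > 1$ the inequality $m_{p-l-1}' > m_{p-l-1}$ propagates downward while the intermediate indices $\{p-l, \dots, p-1\}$ are strict as well, giving $J = \{0, \dots, p-1\}$; if $l = 1$ and $e_{p-1}$ is curved, the proof shows $m_j' = m_j$ for all $j \leq p-2$, leaving $J = \{p-1\}$; and if $l = 1$ with $e_{p-1}$ straight (so $v_{p-1}$ is breaking), the proof produces the plateau $m_j' = m_j$ on $\{p-k, \dots, p-2\}$ together with $m_{p-k-1}' > m_{p-k-1}$, whence $J = \{0, \dots, p-k-1\} \cup \{p-1\}$.

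The one genuinely delicate point, and the step I expect to be the main obstacle, is the bookkeeping that identifies the $l$ of this statement with the $l$ of the earlier proof. Here $l$ is the maximal length of a straight path \emph{ending at} $v_p$, whereas the proof of Theorem \ref{theo:incr} uses the maximal straight path ending at $v_{p-1}$. For $s_p$ (where $e_p$ is curved in $\epsilon$ but straight in $\epsilon'$) and for $b_p$ when $v_{p-1}$ is neutral, the two agree, because adjoining the straight edge $e_p$ to the maximal left-segment at $v_{p-1}$ reproduces exactly the segment at $v_p$; the edge-count must be tracked with care, since $[v_{p-l}v_p]$ and $[v_{p-l}v_{p-1}]$ differ by one edge. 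The two parameters diverge only when $v_{p-1}$ is a breaking vertex: then the straight path at $v_p$ has length $1$, while the maximal straight path at $v_{p-1}$ has length $k \geq 2$, and this is precisely why the fourth case is phrased through the auxiliary quantity $k$. Once this correspondence is verified, the four equalities for $J$ follow with no further computation.
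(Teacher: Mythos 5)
Your proposal is correct and follows essentially the same route as the paper: the paper gives no separate argument, introducing the proposition with ``As a consequence of the previous proof'', and the proof of Theorem \ref{theo:incr} records exactly the equalities and strict inequalities you transcribe — equality on $\{p-l,\dotsc,p-1\}$ (resp. on $\{p-k,\dotsc,p-2\}$), strictness at $p-l-1$ (resp. at $p-k-1$ and at $p-1$), equality at indices $\geq p$, and downward propagation via (\ref{eq:lemprox}). The bookkeeping issue you flag is genuine but resolves as you expect: although the proof of Theorem \ref{theo:incr} nominally defines $l$ by straight paths ending at $v_{p-1}$, its subcase ``$l=1$ and $e_{p-1}$ straight'' is only coherent under the proposition's convention (paths ending at $v_p$), and with that reading the two parameters agree except when $v_{p-1}$ is breaking, which is exactly the fourth case governed by $k$.
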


This shows in particular that, in the case when the initial multiplicity does not change, the 
multiplicity function changes at only one vertex.

\medskip
\section{Extremal multiplicities and Milnor numbers for fixed complexity}
\label{sec:Extr}

In this section we prove that for a fixed complexity $n \geq 3$, 
the minimal multiplicity is $2$ and the maximal one is the $n$-th Fibonacci number $F_n$. 
The number $F_n$ appears as the natural candidate for the biggest value 
$m_0$ attained in a sequence $(m_0, \dotsc , m_{n-1} =1)$ generated in reverse 
order by using the rules described in Proposition \ref{compmult}. 
Anyway, the complete proof requires some meticulosity and we do it here as a 
consequence of Theorem \ref{theo:incr}. In particular, we show that the maximal 
multiplicity $F_n$ is achieved by exactly two combinatorial types in $\calE_n$ 
and that one of them is distinguished by the property 
of maximizing also the Milnor number. Meanwhile, the minimal multiplicity $2$ is 
obviously achieved by only one combinatorial type. 
\medskip

\begin{definition} \label{def:Extrdiag}
For all $n \geq 3$, denote by $\alpha_n, \omega_n, \pi_n \in \calE_n$ the 
diagrams represented in Figure  \ref{fig:Extrem}.  That is:
  \begin{itemize}
      \item $\alpha_n$ has all its edges $e_2, \dotsc, e_{n-2}$ curved (therefore  
          all its vertices $v_2, \dotsc, v_{n - 2}$ are neutral). It is the Enriques diagram 
          of the plane branch defined by the equation $x^{2n-3} -y^2 =0$, that is, of the simple 
          singularity $\mathbb{A}_{2n - 4}$.
      \item $\omega_n$ has all its vertices $v_2, \dotsc, v_{n - 2}$ breaking  
            (consequently, all the edges $e_2, \dotsc, e_{n-2}$ are straight). It is the Enriques diagram 
            of the  plane branch defined by the equation $x^{F_{n+1}}-y^{F_n}=0$. 
      \item $\pi_n$ is identical to $\omega_n$,  excepted that $v_2$ is a neutral 
              vertex. It is the Enriques diagram of the plane branch 
              defined by the equation $x^{F_{n-2}+F_n}-y^{F_n}=0$. 
  \end{itemize}
 \end{definition}

The notations $\alpha_n, \omega_n, \pi_n$ are explained in Remark \ref{rem:alphaomega}. 
In order to get the defining equations, one may use the transformation rules described 
for instance in \cite[Section 3]{W 04}. The previous Enriques diagrams may be characterized 
in the following way:

\begin{theorem}\label{theo:mult}
  The diagrams $\alpha_n, \omega_n, \pi_n$ satisfy the following extremal properties 
  among Enriques diagrams of blow-up complexity $n$: 
  \begin{enumerate}
      \item $\alpha_n$ is the unique diagram with minimal multiplicity, which is equal to $2$, 
          and with minimal Milnor number, equal to $2n-4$. 

      \item $\omega_n, \pi_n$ are the only diagrams with maximal multiplicity, 
         equal to the $n$-th Fibonacci number $F_n$. 
      
      \item $\omega_n$ is the unique diagram with maximal Milnor number, equal to 
          $(F_{n+1} -1)(F_n -1)$. 
   \end{enumerate} 
\end{theorem}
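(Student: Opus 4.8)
The strategy is to exploit the multiplicity‑increasing operators of Theorem \ref{theo:incr} together with Proposition \ref{precision} in order to show that $\alpha_n$, $\omega_n$, $\pi_n$ are precisely the extremal configurations. The key observation is that any diagram in $\calE_n$ can be transformed into any other by a sequence of straightening operators $s_p$ and breaking operators $b_p$ (and their inverses): indeed, the possible decorations of the ``free'' vertices $v_2,\dots,v_{n-2}$ and edges $e_2,\dots,e_{n-2}$ (recall Remark \ref{rem:Key}) are exactly what these operators modify. Since by Theorem \ref{theo:incr}(\ref{strictfunct}) every operator strictly increases the multiplicity function $\underline m$ (hence $m_0$ does not decrease and by part (\ref{strictMiln}) the Milnor number strictly increases), a diagram minimizing multiplicity or Milnor number must admit no backward operator, and a maximizing diagram must admit no forward operator. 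First I would translate these two ``no applicable operator'' conditions into statements about the decorations: a diagram to which no forward operator applies has all edges $e_2,\dots,e_{n-2}$ straight (no $s_p$) and no neutral vertex flanked by two straight edges (no $b_p$), whereas a diagram to which no backward operator applies has the opposite saturation.

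For part (1), I would argue that $\alpha_n$ is the unique diagram with \emph{all} its free edges curved, so no straightening can be undone and no breaking exists to undo; hence it is the unique global source under the operator dynamics, giving simultaneously minimal multiplicity and minimal Milnor number. The multiplicity value $2$ then follows directly from the recursion of Proposition \ref{compmult}: with every $e_i$ curved, rule (\ref{mult1}) forces $m_0=m_1=\dots=m_{n-2}=m_{n-1}=1$ except at the one spot where the single curved/straight transition near $v_1$ forces $m_0=2$; more cleanly, one reads off the multiplicity sequence $(2,2,\dots,2,1)$ and computes $\mu=\sum m_i(m_i-1)=2\cdot(n-2)=2n-4$ from Proposition \ref{compMiln}, or equivalently invokes Proposition \ref{compMiln2} on $x^{2n-3}-y^2$.

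For parts (2) and (3), I would show that the diagrams to which no forward operator applies are exactly those with every free edge straight and no breakable neutral vertex; a short combinatorial analysis of the decorations shows these are precisely $\omega_n$ (all free vertices breaking) and $\pi_n$ (identical but with $v_2$ neutral, which cannot be broken because $e_1$ is always curved, so $[v_1v_3]$ is not straight). Thus exactly these two diagrams are maximal for the operator order and hence for multiplicity. The value $m_0=F_n$ is computed by running the recursion of Proposition \ref{compmult}: for $\omega_n$, every free vertex breaking forces rule (\ref{mult3}) at each step, giving $m_{i-1}=m_{i+\text{(run)}}+\dots$ that reproduces the Fibonacci recursion $F_{n+1}=F_n+F_{n-1}$ bottom‑up, yielding the sequence of Fibonacci numbers and $m_0=F_n$ (matching $x^{F_{n+1}}-y^{F_n}$ via Proposition \ref{compMiln2}); $\pi_n$ yields the same top multiplicity because modifying only $v_2$ changes $m_0$ by a controlled amount that still equals $F_n$, as recorded in its defining equation $x^{F_{n-2}+F_n}-y^{F_n}$. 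Finally, to single out $\omega_n$ as the unique maximizer of the Milnor number, I would use Theorem \ref{theo:incr}(\ref{caseq}): the passage $\pi_n\to\omega_n$ is exactly a breaking operator $b_2$ with $e_1$ curved, the one situation in which $m_0$ is unchanged but the multiplicity function strictly increases elsewhere, so by Proposition \ref{compMiln} the Milnor number strictly increases. Hence $\mu(\omega_n)>\mu(\pi_n)$, and since $\omega_n$ is the unique global sink of the operator dynamics it has the strictly largest Milnor number overall; a direct computation via Proposition \ref{compMiln2} on $x^{F_{n+1}}-y^{F_n}$ gives the stated value $(F_{n+1}-1)(F_n-1)$.

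The main obstacle I anticipate is not the extremality argument itself but the bookkeeping in the ``no forward operator'' classification: one must verify carefully that $\omega_n$ and $\pi_n$ are the \emph{only} diagrams on which neither $s_p$ nor $b_p$ acts, which requires checking that once all free edges are straight, the sole remaining freedom is whether $v_2$ is broken, and that $v_2$ breaks iff the adjacent segment is straight --- a condition governed by the fixed curvature of $e_1$. Establishing that the operators act transitively (so that the extremal diagrams are genuinely unique global source/sink rather than merely local) is the delicate point, and I would handle it by an explicit induction on the number of non‑maximal decorations, using Proposition \ref{precision} to track exactly which multiplicities change at each step.
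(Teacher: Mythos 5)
Your overall strategy --- organizing the proof around the operators of Theorem \ref{theo:incr}, with $\alpha_n$ as a global source and $\omega_n$ as a global sink --- is exactly the paper's, and your treatments of parts (1) and (3) essentially reproduce its arguments. However, your argument for part (2) contains a genuine error. You claim that the diagrams admitting no forward operator are exactly $\omega_n$ and $\pi_n$, justifying the inclusion of $\pi_n$ by saying that $v_2$ ``cannot be broken because $e_1$ is always curved, so $[v_1v_3]$ is not straight.'' This misreads Definition \ref{def:operators}: the hypothesis of $b_2$ concerns the path $[v_1v_3]$, which consists of the edges $e_2=[v_1v_2]$ and $e_3=[v_2v_3]$; the edge $e_1=[v_0v_1]$ plays no role in it. In $\pi_n$ both $e_2$ and $e_3$ are straight and $v_2$ is neutral, so $[v_1v_3]$ \emph{is} straight and $b_2(\pi_n)=\omega_n$ is defined --- a fact you yourself invoke in part (3) (``the passage $\pi_n\to\omega_n$ is exactly a breaking operator $b_2$ with $e_1$ curved''), so your proposal is internally inconsistent. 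The unique diagram with no applicable forward operator is $\omega_n$ alone.

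Consequently, the principle your part (2) rests on --- that a multiplicity maximizer must admit no forward operator --- is false, precisely because $m_0$ is only \emph{weakly} increasing under the operators: $\pi_n$ is a maximizer that does admit one. The uniqueness argument must instead exploit the equality case of Theorem \ref{theo:incr}~(\ref{caseq}), as the paper does: every $\epsilon\in\calE_n$ reaches $\omega_n$ by a sequence of operators, so $m_0(\epsilon)\le m_0(\omega_n)=F_n$, and equality forces every operator in that sequence to be a breaking $b_i$ applied with $e_{i-1}$ curved. If only breaking operators are needed, then all of $e_2,\dots,e_{n-2}$ are already straight in $\epsilon$, so $e_{i-1}$ is straight for every admissible $b_i$ with $i\ge 3$; since $e_1$ is the only curved edge available, the sole permitted operator is $b_2$, whence $\epsilon\in\{\omega_n,\pi_n\}$. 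Your proposal as written does not contain this step, and without it the claim that $\omega_n,\pi_n$ are the \emph{only} maximizers is unproven. The same weak-monotonicity issue infects your part (1) statement that a multiplicity minimizer ``must admit no backward operator,'' but there the repair is immediate (and is what the paper says): any $\epsilon\neq\alpha_n$ has at least one straight edge among $e_2,\dots,e_{n-2}$, so any operator sequence from $\alpha_n$ to $\epsilon$ contains a straightening, which strictly increases $m_0$.
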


\begin{figure}[h] 
\vspace*{6mm}
\labellist \small\hair 2pt 
\pinlabel{$v_0$} at -10 4
\pinlabel{$v_1$} at -7 30
\pinlabel{$v_{n-2}$} at 120 4
\pinlabel{$v_{n-1}$} at 196 4

\pinlabel{$v_0$} at 266 -10
\pinlabel{$v_1$} at 285 -10
\pinlabel{$v_2$} at 304 -10
\pinlabel{$v_{n-2}$} at 345 92
\pinlabel{$v_{n-1}$} at 382 90

\pinlabel{$v_0$} at 433 -10 
\pinlabel{$v_1$} at 453 -10
\pinlabel{$v_2$} at 472 -10
\pinlabel{$v_{n-2}$} at 530 92
\pinlabel{$v_{n-1}$} at 570 92

\pinlabel{{\Large $\alpha_n$}} at 79 -30
\pinlabel{{\Large $\omega_n$}} at 323 -30
\pinlabel{{\Large $\pi_n$}} at 480 -30

\endlabellist 
\centering 
\includegraphics[scale=0.65]{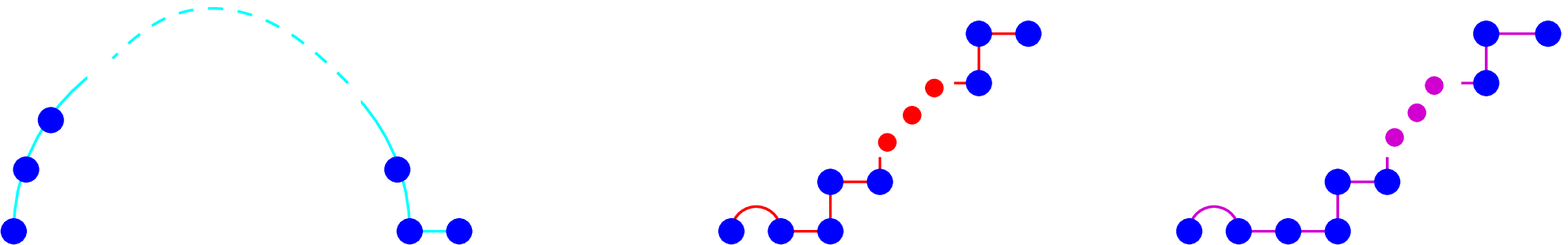}
\vspace*{5mm} 
\caption{The diagrams $\alpha_n$, $\omega_n$ and $\pi_n$} 
\label{fig:Extrem} 
\end{figure}

 \begin{proof} All the statements of this theorem are rapid consequences of 
    Theorem \ref{theo:incr}. 
    
    \medskip
  {\bf -- Proof of (1):} The diagram $\alpha_n$ has multiplicity $2$ and 
      Milnor number $2n-4$, as may be seen from the defining equation and 
      Proposition \ref{compMiln2}. Any 
      other diagram $\epsilon \in \calE_n$ may be obtained from 
      $\alpha_n$ by a sequence of straightening and breaking operators, with at 
      least one straightening operator being applied. 
      By Theorem \ref{theo:incr}, we deduce that $m_0(\epsilon) > m_0(\alpha_n)$ 
      and $\mu(\epsilon) > \mu(\alpha_n)$ for all $\epsilon \in \calE_n$.

      \medskip
   {\bf -- Proof of (2):} We see that $\omega_n = b_2(\pi_n)$ and that $e_1$ is curved 
   in $\pi_n$. By Theorem \ref{theo:incr}, we deduce that $\omega_n$ and $\pi_n$ 
   have the same multipicity and that $\mu(\omega_n) > \mu(\pi_n)$, 
   as may be seen also from the defining equations. 
   
   Assume now that $\epsilon \in \calE_n$ is a diagram different from them. Therefore, 
   one may obtain $\omega_n$ from it by applying a sequence of at least two 
   straightening or breaking operators. As $\epsilon \neq \pi_n$, 
   either one of them is a 
   straightening operator, or there is a breaking one $b_i$ among them 
   such that $e_{i-1}$ is not curved. Therefore, Theorem \ref{theo:incr} implies that 
   $m_0(\epsilon) < m_0(\omega_n)$. 
   
       \medskip
    {\bf -- Proof of (3):} The reasoning is the same, but even simpler, as applying an 
    operator makes the Milnor number increase {\em strictly}. The given expression of 
    $\mu(\omega_n)$ may be deduced from the defining equation and Proposition 
    \ref{compMiln2}. 
\end{proof}

As an immediate consequence of the previous theorem we get:

\begin{corollary}
    Any plane branch of multiplicity $m \geq 2$ has blow-up complexity at least $n$, 
    where the integer $n \geq 3$ is such that $F_{n-1}< m \leq F_n$. 
 \end{corollary}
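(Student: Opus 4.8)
The plan is to derive this directly from part (2) of Theorem \ref{theo:mult}, which bounds the multiplicity of a branch in terms of its blow-up complexity. First I would observe that a plane branch of multiplicity $m \geq 2$ is singular, so by the discussion preceding Definition \ref{blowcomp} its blow-up complexity $k$ satisfies $k \geq 3$; in particular Theorem \ref{theo:mult} applies to the set $\calE_k$, and it tells us that every diagram of complexity $k$ has multiplicity at most $F_k$. Thus the given branch satisfies $m \leq F_k$.

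Next I would fix the integer $n \geq 3$ determined by $F_{n-1} < m \leq F_n$ and argue by contraposition. Suppose that $k < n$, that is $k \leq n-1$. Since the Fibonacci sequence is non-decreasing for indices at least $1$ (indeed $F_{j+1} = F_j + F_{j-1} \geq F_j$), the inequality $k \leq n-1$ gives $F_k \leq F_{n-1}$. Combining this with the bound $m \leq F_k$ from the previous paragraph yields $m \leq F_{n-1}$, contradicting the defining inequality $F_{n-1} < m$. Hence $k \geq n$, which is exactly the assertion.

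The only points requiring a moment of care are that such an integer $n \geq 3$ exists and is uniquely determined, and that Theorem \ref{theo:mult} is genuinely available. The first holds because the half-open intervals $(F_{n-1}, F_n]$ for $n \geq 3$ partition the integers $\geq 2$: one has $(F_2, F_3] = \{2\}$, $(F_3, F_4] = \{3\}$, $(F_4, F_5] = \{4, 5\}$, and so on. The second holds because $m \geq 2$ forces the branch to be singular and hence $k \geq 3$. I do not anticipate any real obstacle here: the corollary is essentially the contrapositive of the maximal-multiplicity bound $m \leq F_k$, packaged together with the monotonicity of the Fibonacci numbers.
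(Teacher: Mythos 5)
Your proof is correct and is exactly the argument the paper has in mind: the corollary is stated there as an immediate consequence of Theorem \ref{theo:mult}(2), namely the bound $m \leq F_k$ for a branch of complexity $k$, combined with the monotonicity of the Fibonacci sequence. Your write-up simply makes this explicit (including the well-definedness of $n$ and the reduction to the singular case $k \geq 3$), so there is nothing to add.
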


\begin{remark} \label{fixMilnor}
    Trying to find the extremal number of characteristic Newton-Puiseux exponents for 
    a fixed blow-up complexity or the extremal blow-up complexity for a fixed Milnor number, 
    one gets less interesting results. We leave the following as exercises for the reader:
    
    $\bullet$ Among the combinatorial types of plane branches with blow-up complexity 
         $ n \geq 3$,  the maximal 
        number of Newton-Puiseux exponents is the integral part $[\frac{n-1}{2}]$. 
         This maximum is achieved 
        once for $n$ {\em odd} and $n-2$ times for $n$ {\em even}. 
    
    $\bullet$ 
    The maximal blow-up complexity of plane branches with Milnor number $\mu \in 2 \N^*$ 
    is equal to $2 + \frac{\mu}{2}$. It is achieved by exactly one combinatorial type 
    (and one analytical type), that of the simple plane branch $\mathbb{A}_{\mu}$, 
    which has one Newton-Puiseux 
    exponent equal to $\frac{\mu + 1}{2}$. 
\end{remark}

\medskip
\section{The self-dual lattice structure on $\calE_n$}
\label{sec:Latt}

In this section we give another way of codifying an Enriques diagram (therefore, the combinatorial 
type of a plane branch) 
as a subset of the set of symbols of vertices and edges. This makes appear 
a natural partial order relation on $\calE_n$, coming from the inclusion relation 
among such subsets.  We show that this relation is 
a {\em lattice structure}.  
Moreover, we show that there exists an order-inverting involution 
on $\calE_n$. This allows to speak about the {\em dual} of any combinatorial type 
of plane branch. We present then a third occurrence of the Fibonacci numbers in our context: 
they appear as the cardinals of the sets of self-dual combinatorial types for each fixed 
complexity. The few notions about lattices which are used here are explained 
in the Section \ref{sec:Basic}. 
\medskip

Let us fix $n \geq 3$. Denote by:
    $$ S_n : = \{v_2,\dotsc,v_{n-2}, e_2,\dotsc,e_{n-2}\}$$
the set of {\em symbols} of the vertices and edges of the Enriques diagrams of complexity $n$ 
whose decoration is undetermined (recall Remark \ref{rem:Key}). That is, we think 
about the {\em name} of each edge or vertex, not about the geometric object itself. 
We encode now each Enriques 
diagram by {\em the subset of $S_n$ consisting of the symbols of its straight edges and of its 
breaking vertices}:

\begin{definition}  \label{code}
   The {\bf code} $\chi(\epsilon) \subseteq S_n$ of an Enriques diagram $\epsilon 
     \in \calE_n$, or of the corresponding combinatorial type of plane branch,  
      is the set of symbols of its straight edges and of its breaking vertices.  
      We denote by $\mathcal{K}_n \subseteq \calP(S_n)$ the set of such codes.
\end{definition} 

Here $\calP(S_n)$ denotes the {\em power set} of $S_n$, that is, the set of its subsets. 

\begin{example} \label{ex:first}
    For instance, if $\epsilon$ is the Enriques diagram of Example \ref{ex:Enr}, 
    which is of blow-up complexity $6$, then its code is $\{ e_3\}$. 
    For any complexity $n \geq 3$, one has $\chi(\alpha_n) = \emptyset, \ 
    \chi(\omega_n) = S_n$ and $ \chi(\omega_n) = S_n \setminus \{v_2 \}.$
\end{example}

As may be easily shown using the Definition \ref{Enriques}, the codes of 
the Enriques diagrams may be characterized in the following way:

\begin{lemma}  \label{lem:Image}
   A subset $\chi$ of $S_n$ is the code of an Enriques diagram $\epsilon \in \calE_n$ 
   if and only if it has one of the  following equivalent properties:
     \begin{itemize}
\item[(a)] for all $i=2,\dotsc,n-2$, if $v_i\in \chi$ then $e_i,e_{i+1}\in \chi$; 
\item[(b)] for all $j=2,\dotsc,n-2$, if $e_j\notin \chi$ then $v_j, v_{j+1}\notin \chi$. 
\end{itemize}
\end{lemma}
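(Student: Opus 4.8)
The plan is to prove three things: that conditions (a) and (b) are equivalent for a subset $\chi\subseteq S_n$, that the code of any diagram in $\calE_n$ satisfies them, and conversely that any $\chi$ obeying them is realized as a code. The equivalence of (a) and (b) is purely formal. Condition (a) is the conjunction, over $i\in\{2,\dots,n-2\}$, of the two implications $v_i\in\chi\Rightarrow e_i\in\chi$ and $v_i\in\chi\Rightarrow e_{i+1}\in\chi$. Contraposing each and reindexing (by $j=i$ in the first and by $j=i+1$ in the second) turns them into the assertion that a curved edge $e_j$ forces both of its endpoints to be neutral; collecting these over all $j$ gives precisely (b). In carrying this out I must keep track of the forced decorations of Remark \ref{rem:Key}: $e_1$ is always curved and $e_{n-1}$ always straight, while $v_0,v_1,v_{n-1}$ are always neutral. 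Consequently any occurrence of $e_{n-1}$ on the right of an implication coming from (a) is automatically satisfied, and any occurrence of a boundary vertex on the right of an implication in (b) is automatically satisfied, so these extreme indices contribute nothing and the equivalence holds verbatim over $\{2,\dots,n-2\}$.

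For the forward inclusion, let $\chi=\chi(\epsilon)$ be the code of a diagram $\epsilon\in\calE_n$ and suppose $v_i\in\chi$, i.e. $v_i$ is a breaking vertex. By Remark \ref{rem:Key} the two edges adjacent to a breaking vertex are straight, so $e_i$ and $e_{i+1}$ are straight and their symbols lie in $\chi$ (with $e_{n-1}$, should it occur, being straight by the boundary convention). Hence $\chi$ satisfies (a), and by the equivalence just discussed it satisfies (b) as well.

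The substance of the lemma is the converse: that every $\chi\subseteq S_n$ satisfying (a) is actually the code of some Enriques diagram of complexity $n$. Here I would build the candidate diagram directly from $\chi$: declare $e_j$ straight exactly when $e_j\in\chi$ (keeping $e_1$ curved and $e_{n-1}$ straight), declare $v_i$ breaking exactly when $v_i\in\chi$, and align all remaining pairs of consecutive straight edges. Condition (a) is exactly what guarantees that every vertex marked breaking lies between two straight edges, so that the marking is compatible with the rules of Definition \ref{Enriques}: each maximal run of consecutive straight edges is subdivided into maximal straight paths precisely at the chosen breaking vertices, and no break is ever placed where an edge is curved. By construction the straight edges and breaking vertices of the resulting diagram recover $\chi$, so $\chi(\epsilon)=\chi$.

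The delicate point is this converse, and specifically the claim that the abstractly decorated graph just produced is a genuine element of $\calE_n$ --- the Enriques diagram of an honest plane branch, not merely a graph obeying the local rules. I would settle this through the correspondence recalled in Section \ref{sec:Enriques}: the multiplicity function attached to the diagram is computed from its decorations by the recursion of Proposition \ref{compmult} starting from $m_{n-1}=1$, which always returns a decreasing sequence of positive integers satisfying the proximity relations of Proposition \ref{proxrel}, and any such multiplicity sequence is realized by a plane branch. Thus no compatibility beyond (a) is required, and the only care needed is at the two ends $v_2,\,e_2$ and $v_{n-2},\,e_{n-1}$, where the forced decorations of Remark \ref{rem:Key} must be respected.
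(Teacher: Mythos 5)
Your overall plan --- contrapose (a) into (b), use Remark \ref{rem:Key} for the forward inclusion, and realize any $\chi$ satisfying (a) as an actual diagram --- is the natural one, and the paper records no proof at all (the lemma is introduced by ``as may be easily shown''), so there is no finer argument to compare against. But your equivalence step contains an index discrepancy that you assert away rather than resolve. Contraposing $v_i\in\chi\Rightarrow e_i\in\chi$ and $v_i\in\chi\Rightarrow e_{i+1}\in\chi$ and reindexing exactly as you say ($j=i$, resp.\ $j=i+1$) yields, for each $j$: if $e_j\notin\chi$ then $v_{j-1}\notin\chi$ and $v_j\notin\chi$ --- that is, the two \emph{endpoints} of $e_j=[v_{j-1}v_j]$ are neutral. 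This is not condition (b) as printed, which involves $v_j$ and $v_{j+1}$, and $v_{j+1}$ is not an endpoint of $e_j$. The two conditions are genuinely inequivalent: for $n=6$, the set $\{v_3,e_3,e_4\}$ is the code of an honest diagram ($v_3$ breaking between the straight edges $e_3,e_4$, with $e_2$ curved), hence satisfies (a), yet it violates the printed (b) at $j=2$; conversely $\{v_3,e_2,e_3\}$ satisfies the printed (b) but is no code, since the breaking vertex $v_3$ would be adjacent to the curved edge $e_4$. So the printed (b) is an off-by-one misprint (it should read: if $e_j\notin\chi$ then $v_{j-1},v_j\notin\chi$; equivalently, if $e_{j+1}\notin\chi$ then $v_j,v_{j+1}\notin\chi$). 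Your computation derives the corrected statement, but the claim that it ``gives precisely (b)'' is false as written; a sound write-up must flag and fix the misprint, since otherwise the first third of your proof asserts an equivalence that demonstrably fails.

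A second, smaller gap is in the converse. That every decorated interval obeying (a) (with $e_1$ curved and $e_{n-1}$ straight) is the Enriques diagram of an actual plane branch does not follow from Propositions \ref{proxrel} and \ref{compMiln} or \ref{compmult}: those only explain how to compute multiplicities \emph{from} a given diagram of a branch, and say nothing about existence. What you need is the classical realizability theorem of Enriques (see \cite{CA 00}, Section 5.5): every abstract diagram --- equivalently, every multiplicity sequence compatible with the proximity equalities --- is realized by a plane branch; moreover $m_{n-1}=1$ together with the fact that $e_{n-1}$ straight forces $P_{n-1}$ to be a satellite point guarantees that the associated resolution is the minimal one and has exactly $n$ blow-ups, so the branch produced really lies in $\calE_n$. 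The paper tacitly assumes this realizability throughout (already in Theorem \ref{cardtypes}, where $\calE_n$ is counted purely combinatorially), so invoking it is legitimate --- but it should be cited as that theorem, not attributed to the two quoted propositions.
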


On the power set $\calP(S_n)$, let us consider the inclusion $\subseteq$ as partial order 
relation. Endowed with it, $(\calP(S_n), \subseteq)$ is a lattice (and even a Boolean algebra). 
Restrict this partial order to the set of codes of the Enriques diagrams of complexity $n$. 

\begin{lemma} \label{lem:Stable}
     The subset $\mathcal{K}_n$ of $\calP(S_n)$ is stable under the intersection 
     $\cap$ and union $\cup$ operations. That is, it is a sublattice 
     of $(\calP(S_n), \subseteq)$. 
\end{lemma}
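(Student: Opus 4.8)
The plan is to verify directly that if $\chi_1, \chi_2 \in \mathcal{K}_n$ are codes of Enriques diagrams, then both $\chi_1 \cap \chi_2$ and $\chi_1 \cup \chi_2$ again satisfy the characterization of Lemma \ref{lem:Image}, and hence belong to $\mathcal{K}_n$. Since Lemma \ref{lem:Image} gives two equivalent defining conditions, I would use whichever of the two conditions is more convenient for each operation. For the union I would use condition (a), and for the intersection I would use condition (b); this choice is dictated by the logical structure of the conditions (an implication of the form ``membership forces further membership'' behaves well under union, while an implication of the form ``non-membership forces further non-membership'' behaves well under intersection).

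\medskip

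\emph{Stability under union.} Let $\chi := \chi_1 \cup \chi_2$ and suppose $v_i \in \chi$ for some $i \in \{2, \dotsc, n-2\}$. Then $v_i \in \chi_1$ or $v_i \in \chi_2$; say $v_i \in \chi_1$. Since $\chi_1$ satisfies condition (a), we get $e_i, e_{i+1} \in \chi_1 \subseteq \chi$. Thus $\chi$ satisfies condition (a), so $\chi \in \mathcal{K}_n$.

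\medskip

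\emph{Stability under intersection.} Let $\chi := \chi_1 \cap \chi_2$ and suppose $e_j \notin \chi$ for some $j \in \{2, \dotsc, n-2\}$. Then $e_j \notin \chi_1$ or $e_j \notin \chi_2$; say $e_j \notin \chi_1$. Since $\chi_1$ satisfies condition (b), we get $v_j, v_{j+1} \notin \chi_1$, and therefore $v_j, v_{j+1} \notin \chi_1 \cap \chi_2 = \chi$. Thus $\chi$ satisfies condition (b), so $\chi \in \mathcal{K}_n$.

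\medskip

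\textbf{The main point to watch} is simply the correct pairing of each lattice operation with the condition that is preserved by it: condition (a) is a ``closure upward'' implication preserved by $\cup$, while condition (b) is the contrapositive form preserved by $\cap$. There is no genuine obstacle here — the argument is elementary once the equivalence of (a) and (b) from Lemma \ref{lem:Image} is in hand — but one should note that trying to prove intersection-stability using condition (a) directly (or union-stability using (b)) does not go through cleanly, which is exactly why having both equivalent formulations available is what makes the proof immediate. Since $\mathcal{K}_n$ is a subset of the lattice $(\calP(S_n), \subseteq)$ that is closed under both the meet $\cap$ and the join $\cup$ inherited from $\calP(S_n)$, it is by definition a sublattice.
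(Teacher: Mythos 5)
Your proof is correct and is essentially the paper's own argument: the paper merely asserts that ``it is immediate to check'' that intersections and unions of sets satisfying either condition (a) or (b) of Lemma \ref{lem:Image} satisfy that condition again, and you have written out that check explicitly. One correction to your closing remark, however: the pairing of each operation with one particular condition is not actually forced, and your claim that the opposite pairing ``does not go through cleanly'' is wrong. Condition (a) is also preserved under intersection (if $v_i \in \chi_1 \cap \chi_2$ then $v_i$ lies in both sets, so $e_i, e_{i+1}$ lie in both, hence in the intersection), and condition (b) is likewise preserved under union (if $e_j \notin \chi_1 \cup \chi_2$ then $e_j$ lies in neither set, so $v_j, v_{j+1}$ lie in neither, hence not in the union); indeed the paper's one-line proof asserts precisely that each single condition is stable under both operations, which is why no appeal to the equivalence of (a) and (b) is needed at all.
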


\begin{proof}
  It is immediate to check that both the intersection and the union of two subsets 
     of $S_n$ which satisfy either one of the conditions (a) and (b) of Lemma 
     \ref{lem:Image} satisfy again that condition. 
\end{proof}

\begin{remark}
    The subset $\mathcal{K}_n \subseteq \calP(S_n)$ is not stable by the operation 
    of taking the 
    complement, therefore it is not a sub-Boolean algebra of 
    $(\calP(S_n), \subseteq)$. For instance, 
    $S_4 = \{v_2, e_2 \}$ and $\{e_2\} \in \mathcal{K}_4$ but 
    $S_4 \setminus \{e_2\} = \{v_2\} \notin \mathcal{K}_4$, by Lemma \ref{lem:Image}. 
\end{remark}

We are ready to define the lattice structure on the set $\calE_n$ of Enriques 
diagrams of complexity $n$:

\begin{definition}  \label{def:Latstr}
   The {\bf staircase partial order relation}  
   $\preceq$ on the set $\calE_n$ of Enriques diagrams of complexity $n$ 
   is defined by saying that, for any two diagrams $\epsilon, \epsilon' \in 
   \calE_n$, one has 
   $ \epsilon \preceq \epsilon'$  if and only if the code of  $\epsilon' $
      contains the code of $ \epsilon. $
\end{definition}

Our motivation for choosing this name comes from 
the fact that an Enriques diagram $\epsilon_2$ 
 is greater than another one $\epsilon_1$ of the same complexity 
 if and only if  $\epsilon_2$ has all the straight edges 
 and breaking vertices of  $\epsilon_1$, and maybe more.  
 That is, if and only if  $\epsilon_2$ looks more like the staircase diagram $\omega_n$ 
 than $\epsilon_1$. 

By the Lemma \ref{lem:Stable}, the staircase relation endows 
$\calE_n$ with a lattice structure. As this lattice is finite, it has an  
absolute minimum and an absolute maximum. 
Their characterization is a first immediate consequence of the previous 
definition and of the Definition \ref{def:Extrdiag} of the diagrams $\alpha_n$ and $\omega_n$:

\begin{proposition}  \label{prop:minmax}
    The minimum of $(\calE_n, \preceq)$ is the diagram $\alpha_n$ with 
    $\chi(\alpha_n) = \emptyset$ and the maximum is the diagram 
    $\omega_n$ with $\chi(\omega_n)= S_n$. More generally, 
    if $\epsilon'$ is obtained from $\epsilon$ by applying a straightening 
    or a breaking operator, then $\epsilon \prec \epsilon'$. 
\end{proposition}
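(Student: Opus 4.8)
The plan is to argue entirely at the level of codes, using the values $\chi(\alpha_n)=\emptyset$ and $\chi(\omega_n)=S_n$ recorded in Example \ref{ex:first} together with the explicit description of the operators in Definition \ref{def:operators}. First I would dispose of the extremal claims. By Definition \ref{code} every code $\chi(\epsilon)$ is a subset of $S_n$, so $\emptyset \subseteq \chi(\epsilon) \subseteq S_n$ for every $\epsilon \in \calE_n$. Since $\preceq$ is defined by inclusion of codes, this reads $\alpha_n \preceq \epsilon \preceq \omega_n$ for all $\epsilon$, which says precisely that $\alpha_n$ is the absolute minimum and $\omega_n$ the absolute maximum of $(\calE_n,\preceq)$.

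For the ``more generally'' assertion I would reduce the claim $\epsilon \prec \epsilon'$ to showing that applying either operator \emph{strictly enlarges} the code, that is $\chi(\epsilon) \subsetneq \chi(\epsilon')$. For a straightening operator $\epsilon' = s_p(\epsilon)$, the hypothesis is that $e_p$ is curved in $\epsilon$, so $e_p \notin \chi(\epsilon)$, whereas in $\epsilon'$ the edge $e_p$ is declared straight and hence $e_p \in \chi(\epsilon')$. The only other decoration possibly altered is that of $v_p$, which becomes breaking when $e_{p+1}$ is straight; this too is an addition to the code. For a breaking operator $\epsilon' = b_p(\epsilon)$, the hypothesis is that $v_p$ is neutral between the two straight edges $e_p, e_{p+1}$, so $v_p \notin \chi(\epsilon)$, while in $\epsilon'$ the vertex $v_p$ is declared breaking, giving $v_p \in \chi(\epsilon')$; the edges $e_p, e_{p+1}$ remain straight and all other decorations are untouched.

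The one point requiring care, and the only genuine obstacle in the argument, is to confirm that straightening $e_p$ does not inadvertently \emph{remove} a symbol from the code by downgrading the decoration of the neighbouring vertex $v_{p-1}$. Here one uses that $e_p$ was curved in $\epsilon$, so $v_{p-1}$ was already neutral in $\epsilon$ (a breaking vertex needs both adjacent edges straight); aligning $e_p$ with $e_{p-1}$ when the latter is straight keeps $v_{p-1}$ neutral, and when $e_{p-1}$ is curved $v_{p-1}$ remains neutral as well. Thus no symbol ever leaves the code under either operator, so $\chi(\epsilon) \subsetneq \chi(\epsilon')$ and therefore $\epsilon \prec \epsilon'$, completing the proof.
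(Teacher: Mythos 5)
Your proof is correct and takes essentially the same route as the paper: the extremal claims follow from $\emptyset \subseteq \chi(\epsilon) \subseteq S_n$ together with $\chi(\alpha_n)=\emptyset$, $\chi(\omega_n)=S_n$, and the second claim follows from the paper's observation that $\chi(s_p(\epsilon))$ equals $\chi(\epsilon) \sqcup \{e_p\}$ or $\chi(\epsilon) \sqcup \{v_p, e_p\}$ and $\chi(b_p(\epsilon)) = \chi(\epsilon) \sqcup \{v_p\}$. Your extra check that straightening cannot demote $v_{p-1}$ (since both endpoints of a curved edge are neutral) simply makes explicit the disjointness that the paper's $\sqcup$ asserts without comment.
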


\begin{proof}
    The first statement results from Example \ref{ex:first}. The second one results from the fact 
     that $\chi(s_p(\epsilon))$ is equal either to $\chi(\epsilon) \sqcup \{e_p\}$ or to 
      $\chi(\epsilon) \sqcup \{ v_p, e_p\}$ and that 
      $\chi(b_p(\epsilon)) = \chi(\epsilon) \sqcup \{v_p\}$ (here $\sqcup$ denotes a disjoint 
      union).
\end{proof}

\begin{remark} \label{rem:alphaomega}
   This proposition motivated us to choose the notations $\alpha_n$ and 
    $\omega_n$, as $\alpha$ is the first letter in the Greek alphabet and 
    $\omega$ is the last one. Concerning the third diagram $\pi_n$ appearing 
    in the statement of Theorem \ref{theo:mult}, we chose its name as 
    $\pi$ is the Greek analog of the initial of ``predecessor'': indeed, 
    $\pi_n$ is a predecessor of $\omega_n$ for the relation $\preceq$. 
\end{remark}

Notice that there is a symmetry between the two equivalent characterizations of 
$\mathcal{K}_n$ stated in Lemma \ref{lem:Image}:  one switches them by interchanging  
the dimensions (edges $\leftrightarrow$ vertices), the types (straight $\leftrightarrow$ 
neutral, curved $\leftrightarrow$ breaking), as well as the order of indexing. More 
precisely:

\begin{definition}  \label{def:dual}
     Let $\epsilon$ be an Enriques diagram of complexity $n \geq 3$. The 
     {\bf dual Enriques diagram} $\Delta_n(\epsilon)$ is defined by:
  $$\begin{array}{l}
           \bullet \ v_k \mbox{ is a breaking vertex in } \Delta_n(\epsilon) 
                 \mbox{ if and only if } e_{n-k} 
                   \mbox{ is a curved edge in } \epsilon; \\
           \bullet \ e_k \mbox{ is a straight edge in } \Delta_n(\epsilon) 
                 \mbox{ if and only if } v_{n-k} 
                   \mbox{ is a neutral vertex in } \epsilon.
      \end{array}$$
\end{definition}

\begin{example} \label{ex:dualEnr}
  Let us consider the Enriques diagram $\epsilon$ 
   of Example \ref{ex:Enr}. As $\chi(\epsilon)= \{ e_3\}$, 
  we get $\chi(\Delta_6(\epsilon)) = \{e_2, e_3, e_4, v_2, v_4\}$. The associated Enriques 
  diagram $\Delta_6(\epsilon)$ is drawn in Figure \ref{fig:Dualdiag}.
\end{example}

\begin{figure}[h!] 
\vspace*{6mm}
\labellist \small\hair 2pt 
\pinlabel{$v_0$} at 5 -10
\pinlabel{$v_1$} at 59 -10
\pinlabel{$v_2$} at 114 -10
\pinlabel{$v_3$} at 85 61
\pinlabel{$v_4$} at 91 114
\pinlabel{$v_5$} at 167 90
\pinlabel{$e_1$} at 31 46

\endlabellist 
\centering 
\includegraphics[scale=0.60]{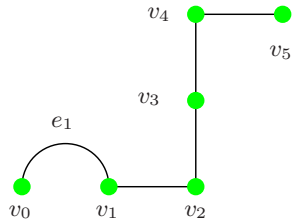} 
\caption{The Enriques diagram dual to the diagram of Figure \ref{fig:Seqblow}} 
\label{fig:Dualdiag}
\end{figure}

One may describe the duality map $\Delta_n : \calE_n \longrightarrow \calE_n$ 
 more geometrically as follows:
  \begin{itemize}
      \item Think about $\epsilon \in \calE_n$ as a cell decomposition of the 
          underlying segment, which is oriented by the chosen order of its vertices.
          
      \item Take the dual cell decomposition $\Delta_n(\epsilon)$ from the topological 
      viewpoint, endowed with the opposite orientation. 
      
      \item Look only at the cells of $\Delta_n(\epsilon)$ which are dual to those of 
         $\epsilon$ which may have both decorations. Decorate them by respecting the 
         following associations of the decorations of dual cells: curved/straight 
            $\leftrightarrow$ breaking/neutral.
  \end{itemize}

\begin{example} \label{ex:dualcells} 
An example of complexity $12$ 
is drawn in Figure \ref{fig:Dualcell}. The vertical lines connect cells which are 
dual to each other. 
The arrows indicate the orientations of the two cell decompositions 
associated to the numberings of their vertices. 
\end{example}

\begin{figure}[h!] 
\vspace*{6mm}
\labellist \small\hair 2pt 
\pinlabel{$v_0$} at 4 220
\pinlabel{$v_1$} at 40 220
\pinlabel{$v_{10}$} at 370 164
\pinlabel{$v_{11}$} at 410 184
\pinlabel{$e_1$} at 20 265

\pinlabel{$v_0$} at 420 54
\pinlabel{$v_1$} at 383 54
\pinlabel{$v_{10}$} at 50 145
\pinlabel{$v_{11}$} at 12 125
\pinlabel{$e_1$} at 400 105

\endlabellist 
\centering 
\includegraphics[scale=0.60]{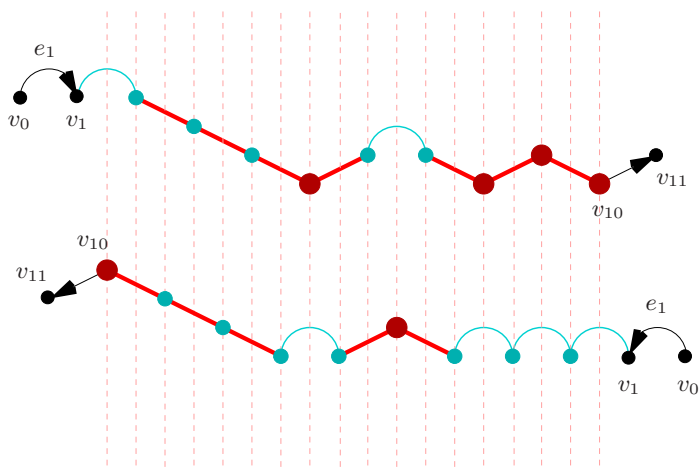} 
\caption{Dual Enriques diagrams as dual cell complexes} 
\label{fig:Dualcell}
\end{figure}

It is an 
immediate  consequence of Definition \ref{def:dual} that:

\begin{corollary} \label{cor:reverse}
   The duality map $\Delta_n : \calE_n \to \calE_n$ reverses the partial order 
    $\preceq$. 
\end{corollary}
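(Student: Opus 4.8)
The plan is to reduce the order-reversing statement for $\Delta_n$ to a purely set-theoretic claim about the code map $\chi$, and then to verify that claim directly from Definition \ref{def:dual}. Since the staircase relation $\preceq$ is by Definition \ref{def:Latstr} nothing but inclusion of codes, and the codes completely determine the diagrams, it suffices to show that whenever $\chi(\epsilon) \subseteq \chi(\epsilon')$ one has $\chi(\Delta_n(\epsilon)) \supseteq \chi(\Delta_n(\epsilon'))$. Thus the first step is to translate Definition \ref{def:dual} into an explicit description of the code $\chi(\Delta_n(\epsilon))$ in terms of $\chi(\epsilon)$.

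The key computation is to read off which symbols lie in $\chi(\Delta_n(\epsilon))$. A vertex $v_k$ lies in the code of $\Delta_n(\epsilon)$ precisely when it is breaking, which by Definition \ref{def:dual} happens iff $e_{n-k}$ is curved in $\epsilon$, i.e.\ iff $e_{n-k} \notin \chi(\epsilon)$. Similarly, an edge $e_k$ lies in $\chi(\Delta_n(\epsilon))$ iff it is straight, i.e.\ iff $v_{n-k}$ is neutral in $\epsilon$, i.e.\ iff $v_{n-k} \notin \chi(\epsilon)$. So $\chi(\Delta_n)$ is obtained from $\chi$ by the involution of $S_n$ that sends $v_k \leftrightarrow e_{n-k}$ and $e_k \leftrightarrow v_{n-k}$ (call it $\sigma$, a bijection of the symbol set $S_n = \{v_2,\dotsc,v_{n-2},e_2,\dotsc,e_{n-2}\}$ onto itself, since $2 \le k \le n-2$ gives $2 \le n-k \le n-2$), followed by taking the complement inside $S_n$. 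In formulas, $\chi(\Delta_n(\epsilon)) = S_n \setminus \sigma(\chi(\epsilon))$.

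Once this formula is established, the order-reversal is immediate. The map $\sigma$ is a bijection of $S_n$, hence it preserves inclusions: $\chi(\epsilon) \subseteq \chi(\epsilon')$ implies $\sigma(\chi(\epsilon)) \subseteq \sigma(\chi(\epsilon'))$. Complementation inside the fixed set $S_n$ reverses inclusions, so $S_n \setminus \sigma(\chi(\epsilon')) \subseteq S_n \setminus \sigma(\chi(\epsilon))$, that is $\chi(\Delta_n(\epsilon')) \subseteq \chi(\Delta_n(\epsilon))$. Translating back through Definition \ref{def:Latstr}, $\epsilon \preceq \epsilon'$ yields $\Delta_n(\epsilon') \preceq \Delta_n(\epsilon)$, which is exactly the assertion that $\Delta_n$ reverses $\preceq$.

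The one point requiring genuine care, rather than the routine inclusion-chasing above, is to confirm that $\Delta_n(\epsilon)$ is actually a well-defined element of $\calE_n$ --- i.e.\ that the prescribed decorations satisfy one of the equivalent admissibility conditions (a), (b) of Lemma \ref{lem:Image}. This is precisely the symmetry between conditions (a) and (b) pointed out in the paragraph preceding Definition \ref{def:dual}: the substitution $v_k \leftrightarrow e_{n-k}$ together with the type-swaps straight$\leftrightarrow$neutral, curved$\leftrightarrow$breaking interchanges condition (a) for $\epsilon$ with condition (b) for $\Delta_n(\epsilon)$. I expect this to be the main obstacle only in the sense that it must not be overlooked; given the formula $\chi(\Delta_n(\epsilon)) = S_n \setminus \sigma(\chi(\epsilon))$ it amounts to checking that complementation turns condition (a) into condition (b) under the index reversal, which is a short verification. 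With well-definedness in hand, the order-reversal is the formal consequence described above, and this is indeed \emph{immediate} from Definition \ref{def:dual} as the statement of Corollary \ref{cor:reverse} asserts.
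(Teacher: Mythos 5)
Your proof is correct and is essentially the paper's own argument: the paper states Corollary \ref{cor:reverse} without proof, as an immediate consequence of Definition \ref{def:dual}, and your identity $\chi(\Delta_n(\epsilon)) = S_n \setminus \sigma(\chi(\epsilon))$ (an inclusion-preserving bijection of the symbol set followed by complementation inside $S_n$) is exactly the formalization of that immediacy. One caution on the well-definedness step you defer to Lemma \ref{lem:Image}: condition (b) as printed there carries an index shift --- it should say that $e_j \notin \chi$ forces $v_{j-1}, v_j \notin \chi$ (the two endpoints of $e_j$), since for instance the valid code $\{e_3, v_3\} \in \mathcal{K}_5$ (the branch $y^3 = x^8$, with multiplicity sequence $(3,3,2,1,1)$) violates the printed version --- so carry out your check against that corrected form, under which the substitution $v_k \leftrightarrow e_{n-k}$ with the type swaps does interchange (a) and (b) as you claim.
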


In fact, $\Delta_n$ {\em is characterized by the previous property}: {\em it is the unique 
bijection of the set $\calE_n$ on itself which reverses the staircase partial order}. 
We will present the proof of this fact in Section \ref{sec:Uniq} 
(see Theorem \ref{uniqdual}), after having recalled 
some generalities about partial order relations, lattices and their Hasse diagrams in 
Section \ref{sec:Basic}. 

\begin{remark}  \label{rem:notsame}
    As proved by Wall \cite{W 94} (see also \cite[Section 7.4]{W 04}), if one starts 
    from a germ $(C,O)$ of an irreducible complex analytic curve in the complex 
    projective plane $\mathbb{P}^2$, then the germ of its projective dual 
    $(\check{C}, t_O)$ at the tangent line $t_O$ to $C$ at $O$ is strongly similar to 
    $C$. More precisely, their strict transforms by the blow-ups of $O$ and 
    $t_O$ respectively have the same combinatorial type. This is enough to see that 
    our notion of duality does not correspond to this projective duality. Indeed, 
    if the Enriques diagram $\epsilon(C)$ of $(C,O)$ is that of Example 
    \ref{ex:Enr}, that of its strict transform after one blow-up is obtained by removing
     $e_1$ (and renumbering consequently the remaining vertices and edges). 
     As shown by Example \ref{ex:dualEnr}, this new diagram  is not isomorphic to the 
     one obtained by the analogous procedure from $\Delta_6(\epsilon(C))$ (here one has 
     also to make $e_1$ curved). 
     But a more fundamental difference between the two notions of duality is that 
     the combinatorial type of the projective dual is not determined by the 
     combinatorial type of the initial branch (see \cite[Example 7.4.1]{W 04}).
\end{remark}

Let us present a third appearance of Fibonacci 
numbers in our context (recall from the proof of Theorem \ref{cardtypes} that we 
denote by $\mathcal{A}_{n}$  the set of  Enriques diagrams in $\mathcal{E}_{m+2}$ 
such that $v_{n-2}$ is neutral):

\begin{proposition} \label{selfdual}
   The set $\mathcal{D}_n$ of self-dual elements of $\mathcal{E}_{n}$ is in a natural bijection with 
   $\mathcal{E}_{(n+2)/2}$ if $n$ is even and with $\mathcal{A}_{(n+3)/2}$ if $n$ is odd. 
   In particular,  there are $F_{n-2}$ self-dual combinatorial types of plane branches 
   of blow-up complexity $n$.
\end{proposition}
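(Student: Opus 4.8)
The plan is to translate everything into the language of codes (Definition \ref{code}) and to exploit the fact, noted just before Definition \ref{def:dual}, that the duality $\Delta_n$ is precisely the symmetry exchanging the two characterizations (a) and (b) of admissible codes in Lemma \ref{lem:Image}. First I would record that, by Definition \ref{def:dual}, a diagram $\epsilon$ with code $\chi = \chi(\epsilon)$ is self-dual if and only if
\[ v_k \in \chi \iff e_{n-k} \notin \chi \quad \text{for all } k \in \{2, \dots, n-2\}, \]
the analogous condition on edges being equivalent to this one after the substitution $k \mapsto n-k$. Substituting this relation into condition (a) of Lemma \ref{lem:Image} (i.e. ``$v_i\in\chi \Rightarrow e_i,e_{i+1}\in\chi$'') shows that for a breaking vertex $v_i$ one must have $v_{n-i}$ and $v_{n-i-1}$ neutral. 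In particular this becomes self-contradictory at the center: if $n=2p$ then $v_p$ is forced to be neutral (whence $e_p$ straight), and if $n=2p+1$ then again $v_p$ is forced neutral (whence $e_{p+1}$ straight).

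The main construction is a \emph{folding} map. Let $L$ denote the set of lower-half symbols, namely $\{v_2,\dots,v_{p-1},e_2,\dots,e_{p-1}\}$ when $n=2p$ and $\{v_2,\dots,v_{p-1},e_2,\dots,e_p\}$ when $n=2p+1$. Since the self-duality relation pairs each symbol of $S_n$ with exactly one symbol of the opposite type via $v_k\leftrightarrow e_{n-k}$, and this pairing carries $L$ bijectively onto the remaining symbols (the central pair being the one forced above), a self-dual code is completely determined by its trace $\chi\cap L$. I would therefore set $\rho(\epsilon):=\chi(\epsilon)\cap L$, viewed as a subset of the symbol set of a diagram of complexity $(n+2)/2$ when $n$ is even and of complexity $(n+3)/2$ when $n$ is odd; the inverse map unfolds a lower-half code by reflecting and swapping decorations according to the self-duality relation and by reinserting the forced central symbol.

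The heart of the argument is to show that $\rho$ is a bijection onto the announced target, which amounts to proving that admissibility of the full self-dual code is equivalent to admissibility of its lower half. Condition (a) applied to indices $i\le p-1$ involves only lower-half symbols and coincides verbatim with condition (a) for the smaller diagram (the edge $e_p$ playing the role of the conventionally straight last edge $e_{m-1}$). Conversely, condition (a) applied to the upper indices is transported by the self-duality relation into condition (b) for the lower half, which by Lemma \ref{lem:Image} is equivalent to (a) there, hence imposes nothing new. This is exactly the symmetry (a) $\leftrightarrow$ (b) realized by $\Delta_n$, and the step requiring the most care is the index bookkeeping: checking that the shift by one in the term $n-i-1$ matches precisely the pairing of $e_{i+1}$ with $v_{n-i-1}$ and of $e_i$ with $v_{n-i}$. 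In the even case this identifies $\mathcal{D}_n$ with $\calK_{(n+2)/2}$, i.e. with all of $\calE_{(n+2)/2}$. In the odd case the forced neutrality of the central vertex $v_p$, which becomes the vertex $v_{m-2}$ of the small diagram with $m=(n+3)/2$, restricts the image to the codes with $v_{m-2}$ neutral, that is, to $\calA_{(n+3)/2}$.

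Finally I would read off the cardinality. By Theorem \ref{cardtypes}, $|\calE_m|=F_{2m-4}$, so for $n$ even $|\mathcal{D}_n|=|\calE_{(n+2)/2}|=F_{2\cdot(n+2)/2-4}=F_{n-2}$. From the proof of Theorem \ref{cardtypes} one has $|\calA_m|=F_{2m-5}$, so for $n$ odd $|\mathcal{D}_n|=|\calA_{(n+3)/2}|=F_{2\cdot(n+3)/2-5}=F_{n-2}$. In both cases the number of self-dual combinatorial types is $F_{n-2}$. The main obstacle is the third step: making the equivalence between whole-diagram and lower-half admissibility fully rigorous, including the delicate treatment of the central symbol that produces the $\calE$ versus $\calA$ dichotomy between the even and odd cases.
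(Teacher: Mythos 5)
Your proof is correct and follows essentially the same route as the paper's: both force the central symbols (for $n=2p$, $e_p$ straight and $v_p$ neutral; for $n=2p+1$, $v_p$ neutral and $e_{p+1}$ straight), both fold a self-dual code to its trace on the first half of $S_n$, identify the image with $\calE_{(n+2)/2}$ resp.\ $\calA_{(n+3)/2}$, and count via Theorem \ref{cardtypes} together with $|\calA_m|=F_{2m-5}$. The only difference is one of detail: the paper simply asserts that the restriction map $\epsilon \mapsto \chi(\epsilon)\cap S_{m+1}$ (resp.\ $\chi(\epsilon)\cap S_{m+2}$) is bijective, whereas you verify the admissibility transport between the two halves via the symmetry of the conditions in Lemma \ref{lem:Image} --- a welcome amplification of the same argument, not a different method.
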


\begin{proof} Informally speaking, the idea is that  ``the first half'' of a self-dual Enriques diagram 
   determines its second half. Moreover, one knows how both halves are joined. This allows 
   to get a bijection between the set of self-dual diagrams of given complexity and a subset 
   of the diagrams of approximately half the complexity. In order to make this argument precise, 
   we describe it according to the parity of $n$. 

       Consider first a self-dual diagram $\epsilon \in \mathcal{D}_{2m}$. If 
       its edge $e_m$ were curved, 
        then its adjacent vertex $v_m$ would be neutral. Therefore, in the dual 
        diagram the edge $e_m$ would be straight, which would 
        contradict the self-duality. This shows that  
    $e_{m}\in \chi(\epsilon)$, which implies by the same argument that  
    $v_{m}\notin \chi(\epsilon)$. As a consequence, the map:
    $$\begin{array}{ccc}
           \mathcal{D}_{2m} & \longrightarrow & \calE_{m+1}  \\
             \epsilon  &   \longrightarrow &  \mbox{the diagram whose  code is } 
                 \chi(\epsilon)\cap S_{m+1} 
          \end{array}$$  
    is bijective.

    We can argue similarly for the self-dual diagrams of $\mathcal{E}_{2m+1}$. 
    Given $\epsilon \in \mathcal{D}_{2m+1}$, one sees that its vertex $v_{m}$ 
    is neutral.  
Analogously to the previous case,  we get a bijection: 
   $$\begin{array}{ccc}
           \mathcal{D}_{2m+1} & \longrightarrow & \calA_{m+2}  \\
             \epsilon  &   \longrightarrow &   \mbox{the diagram whose  code is }  
                 \chi(\epsilon)\cap S_{m+2} 
          \end{array}.$$

Using Theorem  \ref{cardtypes} as well as the computation of the cardinality of 
$\mathcal{A}_n$ done in its proof, we get now the cardinality 
of the set of self-dual diagrams of given complexity, as stated in the proposition.
\end{proof}

\medskip

\section{Basic facts about posets and  lattices}
\label{sec:Basic}

In this section we explain basic facts about \emph{partially ordered sets}, 
{\em lattices} and {\em Hasse diagrams}. We will apply these notions 
only to {\em finite sets}. For a more detailed introduction to lattices, one may consult 
Birkhoff and Bartee's book \cite{BB 70}. 
For much more details about lattices and the historical development of their theory, 
one may consult Birkhoff \cite{B 48} and Gr\"{a}tzer \cite{G 71}. 
\medskip

\begin{definition} \label{posetdef}
   A {\bf partial order} $\preceq$ on a set $S$ is a binary relation 
   which is reflexive, antisymmetric and transitive. 
   A {\bf partially ordered set (poset)} is a set endowed with a partial order. 
  A {\bf hereditary subset} (or {\bf ideal}) of a poset is such that each time it  
   contains some element, it also contains all the elements which 
   are less or equal to it. 
\end{definition}

As is customary for the usual partial order $\leq$ on $\R$, if 
$\preceq$ is a partial order on a set $S$, 
we denote by $\prec$ the (transitive) binary relation defined by: 
  $$ a \prec b \ \Leftrightarrow \ (a \preceq b \mbox{ and } a \neq b). $$
Formally speaking, this second relation is not a partial order, as it is 
not reflexive. Nevertheless, common usage allows to speak also about 
{\em the partial order $\prec$}. Notice that the usual notation of inclusion 
of subsets of a given set does not respect this convention: 
$A \subset B$ does not imply that $A \neq B$. 

\begin{definition}
  Let $(E, \preceq)$ be a poset. The associated {\bf successor relation}, 
  denoted $\prec_s$, is the binary relation defined by the condition 
 that, for any $a,b \in E$, one has $a \ \prec_s \ b$ if and only if 
 $ a \prec b$ and if there is no element $c \in E$ such that $a \prec c \prec b$. 
  We say then that $b$ {\bf is a successor of} $a$ or that $a$ {\bf is a predecessor of} $b$.  
\end{definition}

Therefore, any partial order defines canonically its associated 
successor relation. This relation may be empty, as illustrated by  
$\Q$ or $\R$ with their usual orders. But on {\em finite} sets, it is easy to see that the knowledge 
of the successor relation is enough to reconstruct the initial partial order  
(see \cite[Section 2.4, Theorem 3]{BB 70}). 

Given a partial order on a finite set, it is more economic to encode the associated 
successor relation, 
as it has less pairs of related elements. A visual way to do this encoding is through its 
associated {\em geometric graph}, which is the geometric realization of the 
covering binary relation, seen as a {\em directed graph}. 
Let us first recall this last notion. 

   An abstract  {\bf directed graph} is a triple $(S,E, A)$ where 
   $S$ is a set of {\bf vertices}, $E$ is a set of {\bf edges} and 
   $A : E \rightarrow S \times S$ is a map. We denote  
    $A(e) = (s(e), t(e))$ and we say that the vertex 
   $s(e)$ is the {\bf source}, the vertex $t(e)$ is the 
   {\bf target} of the edge $e$ and that the two vertices are {\bf connected by} $e$. 
   If the map $A$ is injective (that is, 
   any pair of vertices is connected by at most one edge) and its 
   image is disjoint from the diagonal (that is, there are no {\bf loops}, which are edges 
   connecting a vertex with itself), 
   we say that we have a {\bf simple directed graph}.  
   A {\bf directed path} is a finite sequence of edges such that 
   the target of each edge is equal to the source of its successor. 
   A {\bf circuit} of a directed 
   graph is a directed path starting and ending at the same vertex.

To any simple directed graph $(S, E, A)$, 
one associates canonically 
its {\bf geometric realization} (a {\bf geometric simple directed graph}), which is a 
simplicial complex with $S$ and $E$ as sets of $0$-simplices and $1$-simplices 
respectively, each $e \in E$ corresponding to the oriented $1$-simplex 
$\overrightarrow{s(e)t(e)}$.

   For example, consider a binary relation $R$ defined on a set $S$, such that no 
   element of $S$ is related to itself.   
    The {\bf geometric graph of $R$}  
    is the geometric realization of the simple directed graph $(S, E, A)$, where 
   $E \subset S \times S$ is the set of pairs $(a,b)$ such that $a\ R \ b$ and 
   $A$ is the inclusion map.

Let us come back to partial order relations:

\begin{definition}\label{Hassediag}
   The {\bf Hasse diagram} of a poset $(S, \preceq)$ is the geometric graph of 
   the associated successor relation $\prec_s$. 
\end{definition}

\begin{example}
    Consider the poset of subsets of $\{1, 2, 3 \}$, ordered by inclusion. Its Hasse 
     diagram is represented in Figure \ref{fig:3elem}. 
\end{example}

\begin{figure}[h!] 
\vspace*{6mm}
\labellist \small\hair 2pt 
\pinlabel{$\emptyset$} at 120 5

\pinlabel{$\{1\}$} at 28 130
\pinlabel{$\{2\}$} at 125 130
\pinlabel{$\{3\}$} at 240 130

\pinlabel{$\{1,2\}$} at -25 260
\pinlabel{$\{2,3\}$} at 255 260
\pinlabel{$\{1,3\}$} at 92 260

\pinlabel{$\{1,2,3\}$} at 76 380

\endlabellist 
\centering 
\includegraphics[scale=0.45]{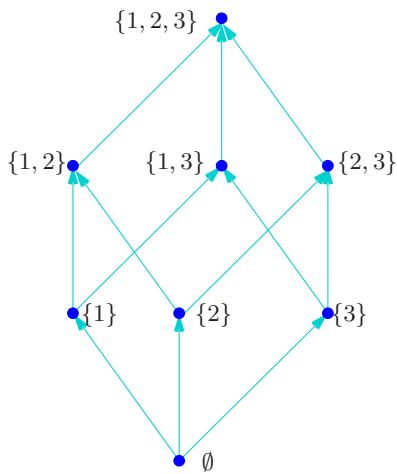} 
\caption{The Hasse diagram of the subsets of a set with three elements} 
\label{fig:3elem} 
\end{figure}

As stated in \cite[Section 2-10]{BB 70} and as it may be easily verified, 
one may characterize the simple directed graphs associated 
to partial orders on finite sets in the following way:

\begin{proposition} \label{charposet}
  A simple directed graph with {\em finite} vertex set $S$ 
 is the Hasse diagram of a partial order on $S$ if and only if it 
  has no circuits and  the only directed path joining the source and 
  the target of any edge  is the edge itself. 
\end{proposition}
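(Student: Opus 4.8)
The plan is to prove the two implications separately, disposing of the forward direction quickly from the definitions and then constructing the order explicitly for the converse. Throughout I would use the fact, recorded with the definition of the successor relation, that $a \prec_s b$ implies $a \prec b$, and that $a \prec_s b$ forbids any $c$ with $a \prec c \prec b$.

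For the forward implication I would assume $G$ is the Hasse diagram of a partial order $\preceq$ on $S$, so that by Definition \ref{Hassediag} an edge $a \to b$ exists exactly when $a \prec_s b$. To rule out circuits, I note that every edge of a directed path forces a strict relation $\prec$ between its endpoints, so along a circuit transitivity would give some $a \prec a$, contradicting irreflexivity of $\prec$. For the second condition, given an edge $a \to b$ and a directed path $a = c_0 \to \cdots \to c_m = b$ with $m \geq 2$, the same observation yields a chain $a \prec c_1 \prec \cdots \prec b$; here $c_1 \neq a$ and $c_1 \neq b$ since all these relations are strict, so $a \prec c_1 \prec b$ contradicts $a \prec_s b$. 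Hence the only such path is the edge itself.

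For the converse I would suppose $G = (S,E,A)$ has no circuits and that the only directed path joining the source and target of any edge is that edge, and then define $a \preceq b$ to mean $a = b$ or there is a directed path from $a$ to $b$ (that is, reachability in $G$). Reflexivity is built in, transitivity follows by concatenating paths, and antisymmetry follows from the absence of circuits: a pair $a \preceq b \preceq a$ with $a \neq b$ would splice two nonempty paths into a circuit. Thus $\preceq$ is a partial order; it is worth remarking that finiteness of $S$ plays no role in this construction.

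The step I expect to be the main obstacle, and on which I would spend the most care, is verifying that the Hasse diagram of $\preceq$ is precisely $G$, i.e. that $a \prec_s b$ holds if and only if $a \to b$ is an edge. If $a \to b$ is an edge then $a \prec b$; were there an intermediate $c$ with $a \prec c \prec b$, then concatenating a path from $a$ to $c$ with one from $c$ to $b$ would produce a directed path from $a$ to $b$ of length at least two (using $a \neq c \neq b$), violating the hypothesis that the edge is the only such path, so $a \prec_s b$. Conversely, if $a \prec_s b$ there is a directed path $a = c_0 \to \cdots \to c_m = b$; should $m \geq 2$, the vertex $c_1$ satisfies $a \prec c_1 \prec b$, where $c_1 \neq a$ because $G$ has no loops and $c_1 \neq b$ because otherwise the segment from $c_1$ to $c_m$ would be a circuit. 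This contradicts $a \prec_s b$, forcing $m = 1$ and hence an edge $a \to b$. Establishing both halves of this equivalence identifies $G$ with the Hasse diagram of $\preceq$ and completes the argument.
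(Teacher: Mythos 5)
Your proof is correct. Note that the paper itself gives no proof of this proposition: it only points to \cite[Section 2-10]{BB 70} and says the characterization ``may be easily verified,'' so there is no argument of the paper's to compare yours against. What you wrote is the standard verification that fills this gap: the forward direction by transitivity of $\prec$, and the converse by taking $\preceq$ to be reachability and then checking that the successor relation of this order reproduces exactly the edge set of $G$. You handled the two genuinely delicate points correctly --- in showing an edge $a \to b$ gives $a \prec_s b$ you used the path-uniqueness hypothesis, and in showing $a \prec_s b$ gives an edge you used the no-loop and no-circuit hypotheses to place $c_1$ strictly between $a$ and $b$. Your side remark is also accurate: finiteness is irrelevant to this existence statement (it is needed in the paper only for the earlier claim that a given partial order on a finite set can be \emph{reconstructed} from its successor relation, which fails for, say, the usual order on $\R$, whose successor relation is empty).
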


Among posets, we will be particularly interested in {\em lattices}:

\begin{definition}  \label{latticedef}
   A {\bf lattice} is a poset $(S, \preceq)$ such that any two elements 
     $a, b \in S$ admit: 
  \begin{itemize} 
    \item a greatest lower bound denoted by $a \wedge b$ and called their 
           {\bf infimum}.
    \item a smallest upper bound denoted by $a \vee b$ and called their 
           {\bf supremum}.
  \end{itemize}
  A {\bf sublattice} of a given lattice is a subset which is closed under the 
  ambient  infimum and supremum operations. A lattice is {\bf distributive} 
  if each one of the two operations is distributive with respect to the other one.
\end{definition}

In the literature, $a\wedge b$ is also called the {\em meet} of $a$ and $b$, and 
$a \vee b$ is called their {\em join}. The knowledge of these two operations 
allows to reconstruct the partial order, as: 
   $$a \preceq b \ \Leftrightarrow \   a = a \wedge b \ \Leftrightarrow \ b = a \vee b. $$

\begin{example}
    The simplest examples 
of non-distributive lattices are the {\em diamond} and {\em pentagon} lattices, with Hasse diagrams 
drawn in Figure \ref{fig:Nondistr}. In both cases, 
$a \wedge (b \vee c) \neq (a \wedge b) \vee (a \wedge c)$. 
It is known that a lattice is distributive 
if and only if it does not contain any sublattice isomorphic to a diamond or a pentagon one. 
\end{example}

\begin{figure}[h!] 
\vspace*{6mm}
\labellist \small\hair 2pt 
\pinlabel{$o$} at 76 5
\pinlabel{$a$} at 18 96
\pinlabel{$b$} at 74 96
\pinlabel{$c$} at 130 96
\pinlabel{$i$} at 76 185

\pinlabel{$o$} at 327 6
\pinlabel{$a$} at 236 113
\pinlabel{$b$} at 236 60
\pinlabel{$c$} at 380 77
\pinlabel{$i$} at 310 185

\endlabellist 
\centering 
\includegraphics[scale=0.50]{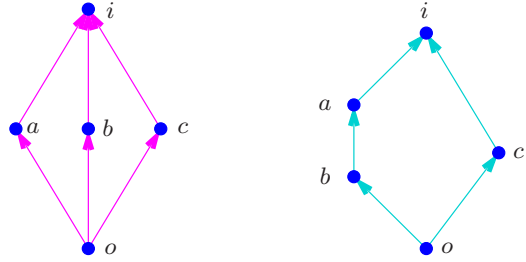} 
\caption{The Hasse diagrams of the diamond and the pentagon lattices} 
\label{fig:Nondistr} 
\end{figure} 

As other examples of posets, let us quote:
\begin{enumerate}
   \item the power set $\mathcal{P}(L)$ of a given set $L$, with the relation of inclusion; 
   \item the set $\N^*$ of positive integers, with the relation of divisibility; 
    \item the set of subgoups of a given group $G$, with the relation of inclusion; this is {\em not} 
      a sublattice of $\mathcal{P}(G)$, as the union of two subgroups is not in general a group.
\end{enumerate}

The first two cases are distributive lattices. In the first case, one has moreover 
a structure of Boolean algebra. 

Of course,  {\em any sublattice of a distributive lattice is also distributive}. In particular, 
any sublattice of a power set $(\calP(L), \subseteq)$ is distributive. Conversely, if a finite 
lattice $(S, \preceq)$ is distributive, then it may be embedded as a sublattice of a power set. 
Such an embedding may be obtained in a canonical way. In order to explain this, we need one more 
definition: 

\begin{definition} \label{def:infirr}
   Let $(S, \preceq)$ be a lattice. An element $i \in S$ is called {\bf sup-irreducible} 
   if it cannot be written as $a \vee b$, with $a$ and $b$ distinct from $i$. 
\end{definition}

The canonical realization of a distributive lattice as a sublattice of a power set is described by 
the following result (see \cite[Chapter IX.4]{B 48} or \cite[Chapter 7, Theorem 9]{G 71} 
for the proof and 
Definition \ref{posetdef} for the notion of hereditary subset of a poset): 

\begin{proposition}  \label{prop:canreal}
      Let $(S, \preceq)$ be a finite distributive lattice. Let $I^{\vee}$ be its subset 
       of sup-irreducible 
      elements. If $a$ is any element of $S$, denote by $\rho(a)$ 
      the subset of $I^{\vee}$ consisting of the sup-irreducible elements which are 
      less or equal to $a$. Then the map $\rho : S \to \calP(I^{\vee})$ embeds $(S, \preceq)$ as a 
      sublattice of $(\calP(I^{\vee}), \subseteq)$. Its image consists of the hereditary subsets of 
      the poset $(I^{\vee}, \preceq)$. 
\end{proposition}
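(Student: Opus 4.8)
The plan is to prove this classical representation theorem (Birkhoff's theorem for finite distributive lattices) by isolating the one decomposition fact on which everything rests, and then verifying in turn that $\rho$ is injective, that it carries $\wedge,\vee$ to $\cap,\cup$, and that its image is exactly the family of hereditary subsets. Throughout I adopt the usual convention that the minimum $\hat{0}$ of $S$ is the empty supremum $\bigvee\emptyset$ and is \emph{not} counted among the sup-irreducibles, so that $\rho(\hat{0})=\emptyset$; this is what makes the empty down-set lie in the image, and it is the only place where the definition in Definition \ref{def:infirr} needs to be read with care. The whole argument relies crucially on the finiteness of $S$, which licenses the induction below and guarantees that all the suprema written down actually exist.

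First I would establish the decomposition lemma: every element is recovered from the sup-irreducibles beneath it, namely $a=\bigvee\rho(a)$ for all $a\in S$, where $\rho(a)=\{\,i\in I^{\vee}:i\preceq a\,\}$. The inequality $\bigvee\rho(a)\preceq a$ is immediate since $a$ is an upper bound of $\rho(a)$. For the reverse I argue by descending induction on $a$, legitimate because $S$ is finite: if $a=\hat{0}$ the claim is the convention above; if $a$ is sup-irreducible then $a\in\rho(a)$, whence $a\preceq\bigvee\rho(a)$; otherwise $a=b\vee c$ with $b,c\prec a$, and since $b\preceq a$ and $c\preceq a$ give $\rho(b),\rho(c)\subseteq\rho(a)$, the induction hypothesis yields $a=b\vee c=\bigvee\rho(b)\vee\bigvee\rho(c)\preceq\bigvee\rho(a)$. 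From this lemma the order-embedding property follows at once: monotonicity $a\preceq b\Rightarrow\rho(a)\subseteq\rho(b)$ is trivial, and conversely $\rho(a)\subseteq\rho(b)$ forces $a=\bigvee\rho(a)\preceq\bigvee\rho(b)=b$; in particular $\rho$ is injective.

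Next I would check that $\rho$ respects both operations, and this is the step I expect to be the main obstacle. For the meet, $\rho(a\wedge b)=\rho(a)\cap\rho(b)$ holds in \emph{any} lattice, since $i\preceq a\wedge b$ means exactly $i\preceq a$ and $i\preceq b$; no distributivity intervenes. For the join the inclusion $\rho(a)\cup\rho(b)\subseteq\rho(a\vee b)$ is automatic from monotonicity, and the reverse inclusion is the \emph{single} point where distributivity is essential. Given a sup-irreducible $i\preceq a\vee b$, I would write $i=i\wedge(a\vee b)=(i\wedge a)\vee(i\wedge b)$ using the distributivity hypothesis (Definition \ref{latticedef}); since $i$ is sup-irreducible (Definition \ref{def:infirr}), one of the two joinands must equal $i$, that is $i\wedge a=i$ or $i\wedge b=i$, hence $i\preceq a$ or $i\preceq b$, so $i\in\rho(a)\cup\rho(b)$. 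This exhibits $\rho$ as a lattice embedding of $(S,\preceq)$ into $(\calP(I^{\vee}),\subseteq)$.

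Finally I would identify the image with the hereditary subsets of $(I^{\vee},\preceq)$. That each $\rho(a)$ is hereditary is clear: if $i\in\rho(a)$ and $j\preceq i$ with $j\in I^{\vee}$, then $j\preceq a$, so $j\in\rho(a)$. Conversely, for a hereditary subset $H\subseteq I^{\vee}$ I set $a:=\bigvee H$ and claim $\rho(a)=H$. The inclusion $H\subseteq\rho(a)$ is clear, and for the reverse I iterate the join-homomorphism just proved to obtain $\rho(\bigvee H)=\bigcup_{h\in H}\rho(h)$; hence any $i\in\rho(a)$ satisfies $i\preceq h$ for some $h\in H$, and heredity of $H$ forces $i\in H$. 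This shows $\rho$ is a bijection onto the hereditary subsets, completing the proof.
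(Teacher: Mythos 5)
Your proof is correct, but note that the paper does not actually prove this proposition: it is invoked as a classical result (Birkhoff's representation theorem for finite distributive lattices), with the proof delegated to \cite[Chapter IX.4]{B 48} and \cite[Chapter 7, Theorem 9]{G 71}. What you have written is essentially the standard argument found in those references, organized the standard way: the decomposition $a=\bigvee\rho(a)$ proved by induction on the finite poset (no distributivity needed there), preservation of meets (true in any lattice), preservation of joins (the single step where distributivity and sup-irreducibility interact), and the identification of the image with the hereditary subsets via $\rho(\bigvee H)=\bigcup_{h\in H}\rho(h)$. One point deserves emphasis, and you handled it well: the convention that the minimum $\hat{0}=\bigvee\emptyset$ is \emph{not} counted among the sup-irreducibles. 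Read literally, Definition \ref{def:infirr} would make $\hat{0}$ sup-irreducible (it cannot be written as a join of two elements distinct from itself), and then $\rho(\hat{0})$ would never be empty and the image could not contain the hereditary subset $\emptyset$, so the last assertion of the proposition would fail. The paper's own enumeration of $I_n^{\vee}$ for $\mathcal{K}_n$, which omits the minimum $\emptyset=\chi(\alpha_n)$, confirms that the standard convention is the intended reading. In short: your route is not a different proof strategy so much as the full proof the paper chose to cite rather than reproduce; it buys self-containedness at the cost of length, and it makes explicit a convention that the paper leaves implicit.
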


We have now enough material to proceed to the proof of the uniqueness of the 
duality of $(\calE_n, \preceq)$.

\medskip
\section{The uniqueness of the duality of plane branches}
\label{sec:Uniq}

If a finite lattice admits an order-reversing bijection, 
  such a bijection is not necessarily unique. 
   For instance, as the reader may easily check, the diamond lattice 
   of Figure \ref{fig:Nondistr} has six such bijections. 
In this section we show that the duality $\Delta_n$ defined in Section \ref{sec:Latt} 
is the {\em unique} bijection of $\calE_n$ which reverses the staircase 
partial order structure. 

\medskip

\begin{figure}[h!] 
\vspace*{6mm}
\labellist \small\hair 2pt 
\pinlabel{$e_2$} at 95 130
\pinlabel{$v_2$} at 95 330

\endlabellist 
\centering 
\includegraphics[scale=0.30]{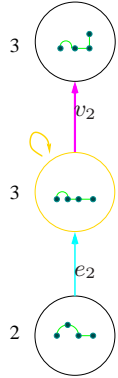} 
\caption{The Hasse diagram of the lattice structure on $\calE_4$} 
\label{fig:Fib-dual-c4} 
\end{figure}

\begin{figure}[h!] 
\vspace*{6mm}
\labellist \small\hair 2pt 
\pinlabel{$e_2$} at 510 128
\pinlabel{$e_3$} at 345 128

\pinlabel{$e_2$} at 360 326
\pinlabel{$e_3$} at 535 326
\pinlabel{$v_3$} at 205 326

\pinlabel{$e_2$} at 120 505
\pinlabel{$v_2$} at 480 505
\pinlabel{$v_3$} at 350 505

\pinlabel{$v_2$} at 185 680
\pinlabel{$v_3$} at 360 680

\endlabellist 
\centering 
\includegraphics[scale=0.35]{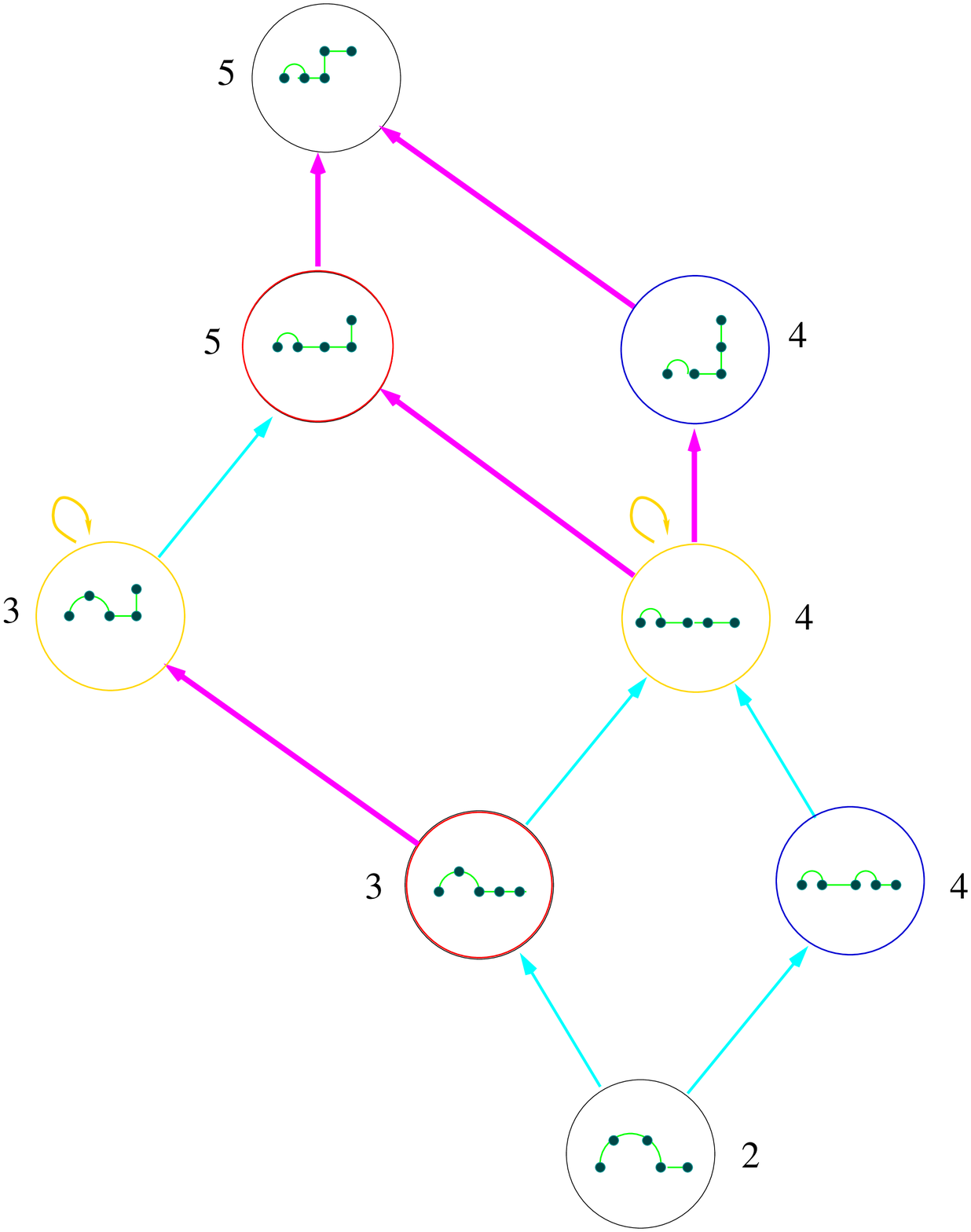} 
\caption{The Hasse diagram of the lattice structure on $\calE_5$} 
\label{fig:Fib-dual-c5} 
\end{figure}

\begin{figure}[h!] 
\vspace*{6mm}
\labellist \small\hair 2pt 
\pinlabel{$e_2$} at 528 150
\pinlabel{$e_3$} at 443 150
\pinlabel{$e_4$} at 333 150

\pinlabel{$v_4$} at 168 350
\pinlabel{$e_3$} at 293 350
\pinlabel{$e_4$} at 375 350
\pinlabel{$e_2$} at 440 350
\pinlabel{$e_2$} at 450 290
\pinlabel{$e_4$} at 513 290
\pinlabel{$e_3$} at 585 290

\pinlabel{$e_3$} at 45 475
\pinlabel{$e_2$} at 138 475
\pinlabel{$v_4$} at 245 475
\pinlabel{$v_3$} at 310 475
\pinlabel{$e_2$} at 425 520
\pinlabel{$v_4$} at 275 520
\pinlabel{$e_3$} at 480 475
\pinlabel{$e_4$} at 545 475
\pinlabel{$v_2$} at 660 475

\pinlabel{$v_3$} at 55 665
\pinlabel{$e_2$} at 145 665
\pinlabel{$e_3$} at 190 665
\pinlabel{$v_4$} at 270 665
\pinlabel{$e_2$} at 425 716
\pinlabel{$v_4$} at 300 716
\pinlabel{$v_3$} at 495 665
\pinlabel{$v_2$} at 560 665
\pinlabel{$e_4$} at 695 716

\pinlabel{$e_2$} at 120 830
\pinlabel{$v_3$} at 195 830
\pinlabel{$v_2$} at 295 830
\pinlabel{$v_4$} at 375 830
\pinlabel{$v_2$} at 580 880
\pinlabel{$v_4$} at 505 880
\pinlabel{$v_3$} at 680 880

\pinlabel{$v_2$} at 335 1070
\pinlabel{$v_3$} at 435 1070
\pinlabel{$v_4$} at 535 1070

\endlabellist 
\centering 
\includegraphics[scale=0.35]{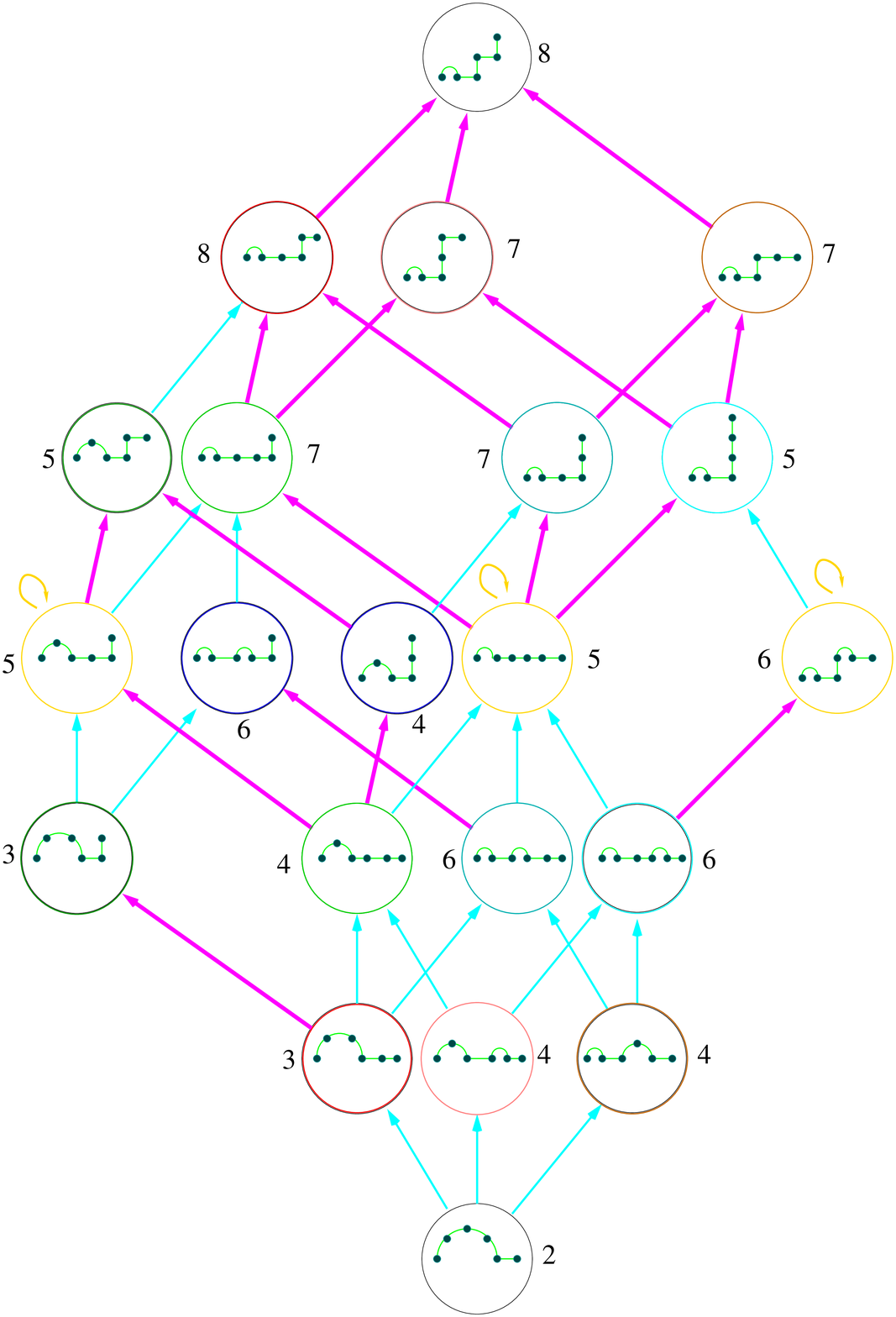} 
\caption{The Hasse diagram of the lattice structure on $\calE_6$} 
\label{fig:Fib-dual-c6} 
\end{figure}

Before proving the announced uniqueness, let us draw the Hasse diagrams 
of a few posets $(\calE_n, \preceq)$.

\begin{example}
  The case $n=3$ is trivial, as $\calE_3$ has only 
one element $\alpha_3 = \omega_3$.  
  In the Figures \ref{fig:Fib-dual-c4}, \ref{fig:Fib-dual-c5} and  \ref{fig:Fib-dual-c6} are 
represented the Hasse diagrams of the lattices $(\calE_4, \preceq)$, 
$(\calE_5, \preceq)$ and  $(\calE_6, \preceq)$ respectively. If an arrow 
 goes from a diagram $\epsilon$ to a greater diagram 
$\epsilon'$, it is labeled by the unique element of $\chi(\epsilon') \setminus 
\chi(\epsilon)$. The diagrams which have a self-referencing arrow are the 
self-dual ones. Near each diagram we indicate the corresponding initial multiplicity. 
\end{example}

As a first consequence of the constructions of Section \ref{sec:Latt} 
and of the definitions of Section \ref{sec:Basic}, one has: 

\begin{proposition} \label{distrlat}
    The staircase partial order relation of Definition \ref{def:Latstr} endows each set 
    $\calE_n$ with a structure of distributive lattice.
\end{proposition}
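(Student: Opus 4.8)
The plan is to deduce distributivity of $(\calE_n, \preceq)$ directly from the work already done in Section \ref{sec:Latt}, rather than verifying the distributive law by hand. The key observation is that Definition \ref{def:Latstr} defines the order $\preceq$ on $\calE_n$ via the code map $\chi$, which is an order-preserving bijection between $\calE_n$ and the set $\mathcal{K}_n \subseteq \calP(S_n)$ of subsets satisfying condition (a) (equivalently (b)) of Lemma \ref{lem:Image}. By Lemma \ref{lem:Stable}, $\mathcal{K}_n$ is a sublattice of $(\calP(S_n), \subseteq)$, with the induced operations being intersection and union. Since $\chi$ is an order-isomorphism onto $\mathcal{K}_n$, it transports the lattice structure: the infimum and supremum in $\calE_n$ are computed by pulling back intersection and union of codes.

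First I would make explicit that $\chi : (\calE_n, \preceq) \to (\mathcal{K}_n, \subseteq)$ is an order-isomorphism. It is a bijection onto $\mathcal{K}_n$ by Definition \ref{code} and Lemma \ref{lem:Image}, and it is order-preserving in both directions essentially by the very definition of $\preceq$ (Definition \ref{def:Latstr}), which declares $\epsilon \preceq \epsilon'$ exactly when $\chi(\epsilon) \subseteq \chi(\epsilon')$. Consequently $\calE_n$ inherits from $\mathcal{K}_n$ the property of being a lattice, with
\[
   \epsilon \wedge \epsilon' = \chi^{-1}\bigl(\chi(\epsilon) \cap \chi(\epsilon')\bigr),
   \qquad
   \epsilon \vee \epsilon' = \chi^{-1}\bigl(\chi(\epsilon) \cup \chi(\epsilon')\bigr),
\]
which are well-defined precisely because Lemma \ref{lem:Stable} guarantees that $\chi(\epsilon) \cap \chi(\epsilon')$ and $\chi(\epsilon) \cup \chi(\epsilon')$ again lie in $\mathcal{K}_n$.

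The distributivity then follows with essentially no extra effort. Any sublattice of a distributive lattice is itself distributive, and $(\calP(S_n), \subseteq)$ is distributive (indeed a Boolean algebra), since set intersection distributes over set union and conversely. Hence $\mathcal{K}_n$ is a distributive lattice, and via the order-isomorphism $\chi$ so is $(\calE_n, \preceq)$. I would cite the general principle that sublattices of distributive lattices are distributive, which is recorded in the remark following Definition \ref{latticedef} in Section \ref{sec:Basic}.

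In truth there is no serious obstacle here: the entire content of the proposition has been front-loaded into Lemmas \ref{lem:Image} and \ref{lem:Stable}, and the remaining argument is the purely formal transport of structure along an order-isomorphism together with the heredity of distributivity under passing to sublattices. The only point requiring minimal care is to state clearly that $\chi$ is an isomorphism of posets and not merely an injection, so that the lattice operations and the distributive identities genuinely descend from $\calP(S_n)$ to $\calE_n$; this is immediate from the definitions but worth spelling out.
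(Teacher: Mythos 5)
Your proof is correct and follows essentially the same route as the paper's: Lemma \ref{lem:Stable} makes $\mathcal{K}_n$ a sublattice of the distributive lattice $(\calP(S_n), \subseteq)$, distributivity passes to sublattices, and the structure is transported to $\calE_n$ via Definition \ref{def:Latstr}. The paper compresses the transport-of-structure step into one sentence, whereas you spell out that $\chi$ is an order-isomorphism and give the explicit formulas for $\wedge$ and $\vee$; this is only a difference in level of detail, not in substance.
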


\begin{proof}
    By Lemma \ref{lem:Stable}, the set $\mathcal{K}_n$ of codes of Enriques diagrams 
    of complexity $n$ is a sublattice of 
    $(\calP(S_n), \subseteq)$. As this last lattice is distributive, we deduce  
    that the first one has also this property. By the Definition \ref{def:Latstr} 
    of the staircase partial order relation, 
    we conclude that $(\calE_n, \preceq)$ is indeed a distributive lattice.
\end{proof}

\begin{figure}[h!] 
\vspace*{6mm}
\labellist \small\hair 2pt 
\pinlabel{$e_2$} at 4 -20
\pinlabel{$e_3 $} at 150 -20
\pinlabel{$e_{n-3} $} at 365 -20
\pinlabel{$e_{n-2}$} at 509 -20

\pinlabel{$v_2$} at 76 100
\pinlabel{$v_3$} at 220 100
\pinlabel{$v_{n-3}$} at 436 100
\pinlabel{$v_{n-2}$} at 580 100

\endlabellist 
\centering 
\includegraphics[scale=0.60]{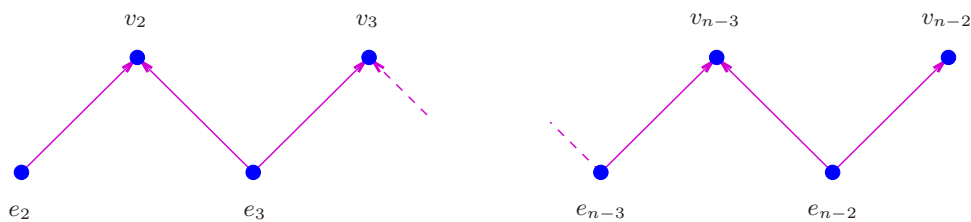} 
\vspace*{3mm} 
\caption{The Hasse diagram of the poset $(S_n, \preceq)$} 
\label{fig:Hasse-irred} 
\end{figure}

Note that 
   by Proposition \ref{prop:canreal}, the apparently ``complicated'' lattices $(\calE_n, \preceq)$ 
   are completely described up to lattice-isomorphisms by the much ``simpler'' 
   posets $(I_n^{\vee}, \preceq)$. 
   As results immediately from Lemma \ref {lem:Image},  the elements of $I_n^{\vee}$ are: 
   \begin{itemize} 
        \item $\{ e_i\}$ for $i \in \{2, \dotsc, n-2 \}$; 
        \item $\{ v_i, e_i, e_{i + 1} \}$ for $i \in \{2, \dotsc, n-3 \}$ and $\{ v_{n-2}, e_{n-2} \}$. 
   \end{itemize}
  
   One has the following relation between the code of an 
   Enriques diagram (see Definition \ref{code}) and its associated hereditary 
   subset of $(I_n^{\vee}, \preceq)$ (see Proposition \ref{prop:canreal}): 
   
   \begin{proposition}  \label{corresp}
       The bijection from  $S_n$ to 
   $I_n^{\vee}$ which sends each $e_i$ to $\{ e_i \}$,  
   each $v_i$ to  $\{ v_i, e_i, e_{i + 1} \}$ and $v_{n-2}$ to 
   $\{ v_{n-2}, e_{n-2} \}$, transforms the code $\chi(\epsilon) \in \calP(S_n)$ 
   of an Enriques  diagram $\epsilon \in \calE_n$ into the hereditary subset 
   $\rho(\epsilon) \in \calP(I_n^{\vee})$. Therefore, if one considers the 
   poset structure  $(S_n, \preceq)$ inherited from $(I_n^{\vee}, \preceq)$ 
   by the previous bijection, its Hasse diagram is as drawn in Figure 
   \ref{fig:Hasse-irred} and $\mathcal{K}_n$ is exactly the set of hereditary 
   subsets of $S_n$. 
    \end{proposition}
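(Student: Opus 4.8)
The plan is to transfer everything into the isomorphic lattice $(\mathcal{K}_n, \subseteq)$, where by Lemma \ref{lem:Stable} the supremum and infimum are simply $\cup$ and $\cap$, and to first pin down the sup-irreducible elements explicitly. The key observation is that any $\chi \in \mathcal{K}_n$ is the union of the singletons $\{e_i\}$ over all edges $e_i \in \chi$ together with the sets $\{v_i, e_i, e_{i+1}\}$ (for $i \leq n-3$), resp. $\{v_{n-2}, e_{n-2}\}$, over all vertices $v_i \in \chi$: this union is contained in $\chi$ precisely because condition (a) of Lemma \ref{lem:Image} forces $e_i, e_{i+1}$ (resp. $e_{n-2}$) to already lie in $\chi$ whenever $v_i$ does, and it visibly contains $\chi$. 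Hence every element of $\mathcal{K}_n$ is a supremum of the listed sets. Conversely, each listed set is sup-irreducible: a singleton $\{e_i\}$ trivially is, and if $\{v_i, e_i, e_{i+1}\} = A \cup B$ with $A, B \in \mathcal{K}_n$, then the vertex symbol $v_i$ lies in one summand, and condition (a) forces that summand to contain $e_i, e_{i+1}$ as well, so it equals the whole set. Together these two facts identify $I_n^{\vee}$ with exactly the list stated before the proposition.

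Next I would check that the indicated bijection, call it $\phi : S_n \to I_n^{\vee}$, sends $\chi(\epsilon)$ to $\rho(\epsilon)$. By Proposition \ref{prop:canreal}, $\rho(\chi(\epsilon))$ is the set of sup-irreducibles contained in $\chi(\epsilon)$. The inclusion $\phi(\chi(\epsilon)) \subseteq \rho(\chi(\epsilon))$ is immediate, since for a symbol in $\chi(\epsilon)$ its image under $\phi$ lies in $\chi(\epsilon)$ — for an edge this is tautological, and for a vertex it is again condition (a). For the reverse inclusion, a sup-irreducible $s \subseteq \chi(\epsilon)$ carries a distinguished symbol (its vertex, or, if $s = \{e_i\}$, its single edge) which belongs to $\chi(\epsilon)$ because $s \subseteq \chi(\epsilon)$; and $\phi$ of that symbol is exactly $s$. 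Thus $\rho(\chi(\epsilon)) \subseteq \phi(\chi(\epsilon))$, proving the first assertion.

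Finally I would read off the Hasse diagram and the heredity statement. The covering relations of $(I_n^{\vee}, \subseteq)$ are obtained by a direct inspection: the only strict inclusions among the listed sets are $\{e_i\} \subset \{v_i, e_i, e_{i+1}\}$ and $\{e_{i+1}\} \subset \{v_i, e_i, e_{i+1}\}$ (together with $\{e_{n-2}\} \subset \{v_{n-2}, e_{n-2}\}$), and each is a cover because no listed set fits strictly in between. Pulling these back through $\phi^{-1}$ yields $e_i \prec_s v_i$ and $e_{i+1} \prec_s v_i$, which is precisely the zigzag of Figure \ref{fig:Hasse-irred}. The identification of $\mathcal{K}_n$ with the hereditary subsets of $(S_n, \preceq)$ is then Proposition \ref{prop:canreal} transported along $\phi$; alternatively it is seen at once, since a subset of $S_n$ is hereditary exactly when it contains $e_i, e_{i+1}$ whenever it contains $v_i$, i.e. exactly when it satisfies condition (a) of Lemma \ref{lem:Image}.

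The only step demanding genuine care is the exceptional boundary behaviour at $v_{n-2}$: because the edge $e_{n-1}$ is always straight (Remark \ref{rem:Key}) and therefore does not belong to $S_n$, the sup-irreducible attached to $v_{n-2}$ is the two-element set $\{v_{n-2}, e_{n-2}\}$ rather than a three-element one. This special case must be tracked consistently through the generating-set decomposition, the sup-irreducibility argument, and the covering relations, and it is also where the verification that the list of sup-irreducibles is \emph{complete} is least automatic; everything else is a routine unwinding of definitions.
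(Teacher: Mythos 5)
Your proof is correct and takes essentially the same route as the paper's own (very terse) proof, which likewise reads off the Hasse diagram from the definition of the bijection and verifies the correspondence between codes and hereditary subsets via Lemma \ref{lem:Image} and Proposition \ref{prop:canreal}. The only difference is level of detail: you also verify the identification of $I_n^{\vee}$ (including the boundary case $\{v_{n-2}, e_{n-2}\}$), which the paper asserts just before the proposition as an immediate consequence of Lemma \ref{lem:Image}.
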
 

\begin{proof}
    The statement about the Hasse diagram is checked easily using the definition 
    of the bijection. Then one checks using Lemma \ref{lem:Image} the statement 
    about the correspondence between codes and hereditary subsets of 
    $(I_n^{\vee}, \preceq)$. 
\end{proof}

\begin{figure}[h!] 
\vspace*{6mm}
\centering 
\includegraphics[scale=0.35]{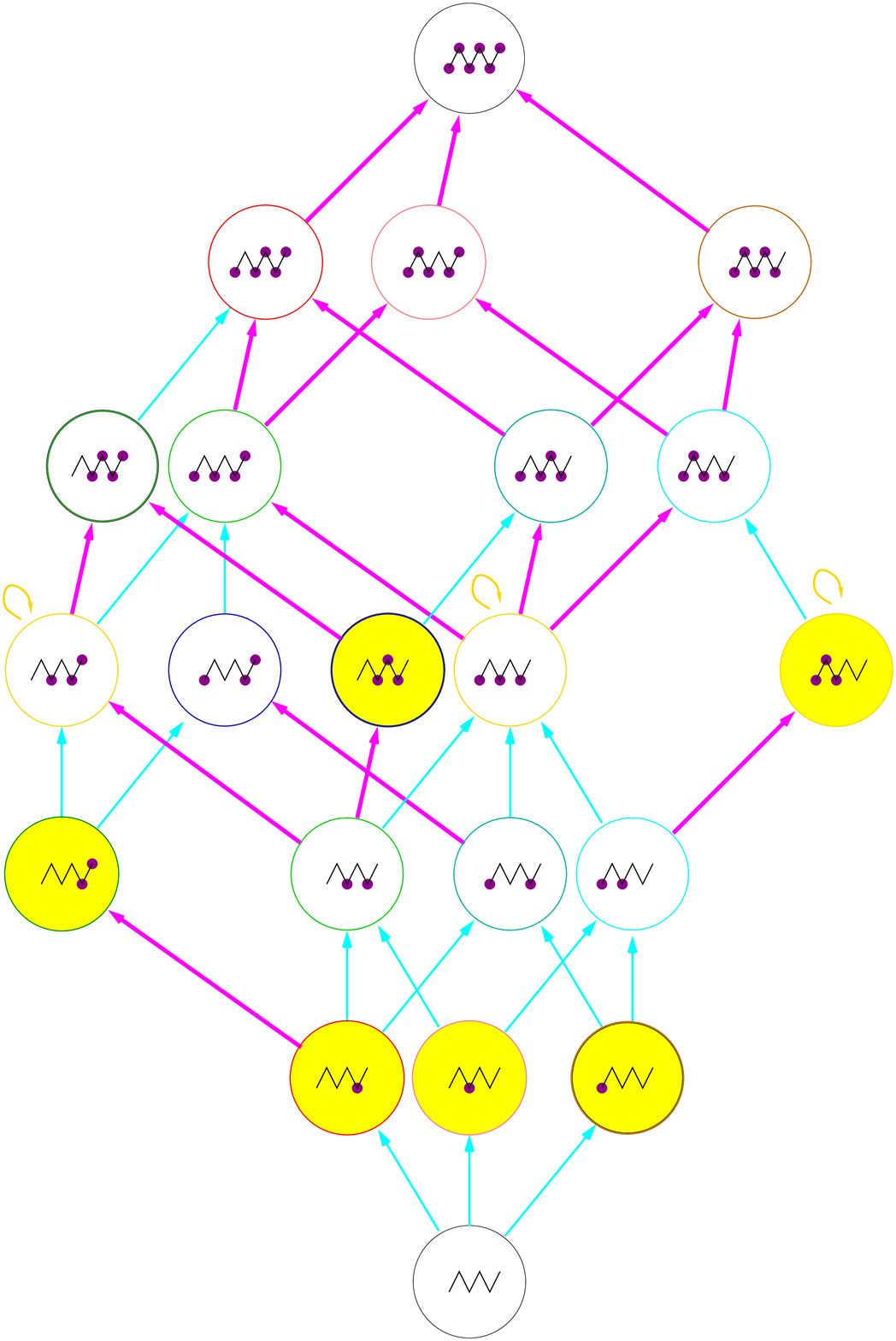} 
\vspace*{3mm} 
\caption{The lattice structure on $\calE_6$ encoded using its sup-irreducible elements} 
\label{fig:Fib-dual-irred} 
\end{figure} 

\begin{example}
  In Figure \ref{fig:Fib-dual-irred} is represented again the lattice of Figure 
   \ref{fig:Fib-dual-c6}. This time, we have represented each element of the lattice 
    $(\calE_6, \preceq)$ 
   through the associated hereditary subset of $(S_6, \preceq)$. 
   Each hereditary subset is represented 
   as a set of discs at the vertices of the Hasse diagram of $(S_6, \preceq)$.
   The sup-irreducible elements of $(\calE_6, \preceq)$, which correspond 
   therefore to the elements of $S_6$ by the bijection defined in Proposition 
   \ref{corresp}, are represented against a colored background.
\end{example}

The previous considerations allow us to prove the announced uniqueness theorem:

\begin{theorem} \label{uniqdual}
    The involution $\Delta_n$ is the unique bijection 
   of the set $\calE_n$ on itself which reverses the staircase partial order. 
\end{theorem}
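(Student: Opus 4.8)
The plan is to exploit the structural description from Proposition \ref{corresp}: the lattice $(\calE_n, \preceq)$ is isomorphic to the lattice of hereditary subsets (ideals) of the poset $(S_n, \preceq)$, whose Hasse diagram is the ``zigzag'' fence drawn in Figure \ref{fig:Hasse-irred}. Since any order-reversing bijection of a lattice must permute its sup-irreducible elements in an order-reversing way (and conversely any order-reversing bijection of the poset of sup-irreducibles extends uniquely to an order-reversing bijection of the whole lattice of ideals), the problem reduces to counting the order-reversing automorphisms of the finite poset $(I_n^\vee, \preceq) \cong (S_n, \preceq)$. Concretely, an order-reversing bijection of $\calE_n$ is determined by, and corresponds bijectively to, an \emph{anti-automorphism} of this fence poset.

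First I would make precise the reduction. If $\Phi : \calE_n \to \calE_n$ is any order-reversing bijection, then it sends the minimum $\alpha_n$ to the maximum $\omega_n$ and, more importantly, it must carry sup-irreducible elements to \emph{inf}-irreducible ones. In a finite distributive lattice realized as the ideals of $(S_n,\preceq)$, the element $\Phi$ induces an order-reversing bijection $\phi$ on the underlying poset $S_n$ (an anti-automorphism), and $\Phi$ is recovered from $\phi$ by the formula sending an ideal $\chi$ to the ideal $\{x : \phi(x) \notin \chi\}$ (complementation composed with the poset anti-automorphism). Thus the map $\Phi \mapsto \phi$ is a bijection between order-reversing bijections of $\calE_n$ and anti-automorphisms of $(S_n, \preceq)$, so it suffices to show that the latter poset has exactly one anti-automorphism.

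Next I would analyze the fence poset $(S_n, \preceq)$ directly from its Hasse diagram (Figure \ref{fig:Hasse-irred}). The poset is a ``fence'': the elements $e_2, \dotsc, e_{n-2}$ are the minimal elements and $v_2, \dotsc, v_{n-2}$ are maximal, with each $v_i$ covering exactly $e_i$ and $e_{i+1}$ (and the endpoints $e_2$, $v_{n-2}$ being special). An anti-automorphism must interchange the set of minimal elements with the set of maximal elements and must preserve the covering structure up to reversal. Because the fence is a path-like poset with a clear notion of the two ends, and because the two ends are distinguishable (for instance $e_2$ is covered by only one maximal element while interior $e_i$ are covered by two), the only cover-preserving bijection reversing the order is the obvious reflection sending $e_i \leftrightarrow v_{n-i}$, which is exactly the combinatorial content of Definition \ref{def:dual}. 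I would verify rigidity by tracking how the degrees (numbers of covers) pin down the image of each end and then propagating along the fence.

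The main obstacle I expect is establishing the \emph{rigidity} cleanly, namely that the fence admits no nontrivial order-automorphism that an anti-automorphism could be composed with. A fence with an odd versus even number of ``teeth'' can sometimes admit a flip symmetry, so I would have to use the asymmetry between the two ends of $(S_n,\preceq)$—caused by the slightly different local shape at $e_2$ versus at $v_{n-2}$, where $v_{n-2}$ covers only $e_{n-2}$—to rule out the end-swapping reflection composed with order-reversal, and thereby fix the unique anti-automorphism. Once rigidity of the fence is in hand, uniqueness of $\Delta_n$ follows immediately, and since $\Delta_n$ is manifestly involutive it is genuinely a duality.
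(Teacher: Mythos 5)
Your proposal is correct and shares its core with the paper's proof: both transport the question, via Propositions \ref{prop:canreal} and \ref{corresp}, to the fence poset $(S_n,\preceq)\cong(I_n^{\vee},\preceq)$ of Figure \ref{fig:Hasse-irred}, and both finish with the same rigidity argument there (an end of the fence is pinned down by its covering degree, and the rest follows by propagation along the path). The routes differ in the reduction step. The paper sidesteps all mention of inf-irreducibles with a composition trick: if $\Delta_n'$ is another order-reversing bijection, then $\Delta_n\circ\Delta_n'$ is an order-\emph{preserving} automorphism of $(\mathcal{K}_n,\subseteq)$; such an automorphism restricts to an automorphism of the subposet of sup-irreducibles, rigidity forces this restriction to be the identity, and Proposition \ref{prop:canreal} then gives $\Delta_n\circ\Delta_n'=\mathrm{id}$, hence $\Delta_n'=\Delta_n$. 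You instead invoke the anti-isomorphism form of Birkhoff duality: order-reversing bijections of the ideal lattice correspond bijectively to anti-automorphisms $\phi$ of the underlying poset via $\chi\mapsto\{x:\phi(x)\notin\chi\}$. That lemma is true and your sketch of it is sound (an order-reversing bijection carries each principal ideal $\downarrow x$ to an inf-irreducible $S_n\setminus \uparrow y$ and is determined by these images because it converts joins into meets), but it is an extra piece of general theory that the paper's trick avoids; in compensation, your route classifies \emph{all} order-reversing bijections without presupposing that $\Delta_n$ exists, and the unique anti-automorphism of the fence (the flip $e_i\leftrightarrow v_{n-i}$) plugged into your formula literally reproduces Definition \ref{def:dual}. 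Two points to tighten in a write-up: your opening claim that an order-reversing bijection ``permutes the sup-irreducible elements'' is wrong as stated (they are sent to inf-irreducibles), though you correct this yourself a paragraph later; and the rigidity does not come from any asymmetry of the underlying path graph, which as a graph always admits the end-swapping flip, but from the poset fact that one end ($e_2$) is minimal while the other ($v_{n-2}$) is maximal, so an order-preserving automorphism cannot exchange the ends and is therefore the identity, while an anti-automorphism must exchange them and is therefore the flip.
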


\begin{proof}
    By our definition of the map $\Delta_n$, it is equivalent to show the analogous property 
    for the poset 
    $(\mathcal{K}_n, \subseteq)$ of codes of the Enriques diagrams of complexity $n$  
    and the corresponding involution, which we denote 
    again by $\Delta_n$ (see Definition \ref{def:dual}). 
    Assume that there is another order-reversing bijection $\Delta'_n$ of $\mathcal{K}_n$. 
    Then $\Delta_n \circ \Delta_n'$ is an automorphism 
     of this  poset. In particular, it restricts to an automorphism of the subposet 
     $(I_n^{\vee}, \preceq)$ of its sup-irreducible elements (see Definition \ref{def:infirr}). 
     
     By Proposition \ref{corresp}, this gives an automorphism of  
     the directed graph of Figure \ref{fig:Hasse-irred}. As $e_2$ is the only 
     vertex from which starts only one edge, it is fixed by this automorphism. 
     Looking then at the distance to this vertex in the Hasse diagram, one sees that 
     all vertices are fixed. 
     
     Therefore, the automorphism  
     $\Delta_n \circ \Delta_n'$ is the identity when restricted to 
     $(I_n^{\vee}, \preceq)$. Proposition \ref{prop:canreal} implies  
      that $\Delta_n \circ \Delta_n'$ is also the identity on $\mathcal{K}_n$, which shows that 
      $\Delta_n = \Delta_n'$. 
   \end{proof}

\begin{remark} \label{rem:rinrem}
     We discovered the results of this paper by thinking about the problem of 
     {\em adjacency of singularities}. A basic remark is that if the combinatorial 
     type $\epsilon_2$ of a plane branch appears on the generic fibers of a one parameter 
     deformation of  
     a plane branch of combinatorial type $\epsilon_1$ (one says then that they are {\em adjacent}), 
      then $m_0(\epsilon_2) \leq 
       m_0(\epsilon_1)$. That is why we decided to understand the behaviour of the 
     initial multiplicity $m_0$ on the set of combinatorial types of plane branches. 
     We started our study by restricting to combinatorial types of fixed blow-up 
     complexity. 
   We checked whether the initial multiplicity was increasing for a slightly 
   different definition of 
  straightening operator than the one of Definition \ref{def:operators}, in which the 
  neighboring edges are treated symmetrically. That is, if any one of them is straight, then the new 
 straight edge is aligned with it. If one draws then an arrow from each Enriques diagram 
 to every diagram obtained from it by one of the two types of operators, one obtains precisely 
 the Hasse diagrams of the staircase partial orders! On that of $\calE_6$ 
   (see Figure \ref{fig:Fib-dual-c6}), the duality 
 jumped to our eyes, which led us to prove that it was a general phenomenon. 
   One may also see on Figure \ref{fig:Fib-dual-c6} that the initial multiplicity 
  is not necessarily increasing for the staircase partial order on $\calE_6$. 
  In fact, and we leave this as an exercise for the reader, it is never increasing 
   on $(\calE_n, \preceq)$, for $n \geq 6$. Nevertheless, we hope that the lattice structure,  
   the duality and the straightening and breaking operators will help describing the pairs 
   $(\epsilon_1, \epsilon_2)$ of combinatorial types of adjacent singularities. 
\end{remark}

\medskip

\begin{thebibliography}{00} 

\bibitem{B 48} Birkhoff, G. \textit{Lattice theory}. 
  Revised edition, A.M.S. Coll. Publ. {\bf XXV}, Amer. Math. Soc., 1948. 

\bibitem{BB 70} Birkhoff, G., Bartee, T.C. \textit{Modern applied algebra}. 
  McGraw-Hill Book Company, 1970. 

\bibitem{BK 86} Brieskorn, E.,  Kn{\"o}rrer, H. \textit{Plane algebraic   curves.} 
  Translated from the German by John Stillwell. Birkh{\"a}user Verlag, 1986.  
  
\bibitem{CA 00} Casas-Alvero, E. \textit{Singularities of plane
    curves.} Cambridge Univ. Press, 2000.
    
\bibitem{EC 18} Enriques, F., Chisini, O. \textit{Lezioni sulla teoria
    geometrica delle equazioni e delle funzioni algebriche.} Vol. II,
  Zanichelli, 1918. 
  
\bibitem{G 71} Gr\"{a}tzer, G. \textit{Lattice theory. First concepts and distributive lattices.} 
    W. H. Freeman and Co., 1971. 
  
\bibitem{M 90} Matveev, S. {\em Complexity theory of three-dimensional manifolds.} 
   Acta Appl. Math. {\bf 19} (1990), no. 2, 101-130. 
  
\bibitem{M 07} Matveev, S. {\em Algorithmic topology and classification of 3-manifolds.} 
Second edition.  Springer,  2007.
 
\bibitem{M 68} Milnor, J. {\em Singular points of complex hypersurfaces.} 
  Princeton Univ. Press, 1968. 
 
\bibitem{LO 95}    
       {L{\^e} D. T.,  Oka, M.},  \textit{On resolution complexity of plane curves},
 {Kodai Math. J.}, \textbf{18}  {(1995)}, no. {1}, 1-36.
  
\bibitem{PP 11} Popescu-Pampu, P. {\it Le cerf-volant d'une constellation.} 
    L'Enseignement Math. {\bf 57}  (2011),  303-347.
    
\bibitem{W 94} Wall, C. T. C. \textit{Duality of singular plane curves.} Jour. London Math. 
   Soc. {\bf 50} (1994), 265-275. 

\bibitem{W 04} Wall, C. T. C. \textit{Singular points of plane
    curves.} Cambridge University Press, 2004. 

\end{thebibliography}
\end{document}